\documentclass[10pt,english]{article}
\usepackage{amssymb}
\usepackage[english]{babel}
\usepackage{graphicx} 
\usepackage{psfrag}
\usepackage[utf8]{inputenc}
\usepackage[T1]{fontenc}
\usepackage{babel}
\usepackage{amsmath}
\usepackage{verbatim}
\usepackage{amsthm}
\usepackage{array}
\usepackage{times}
\usepackage{float}
\usepackage[hang,sc,small]{caption}

\theoremstyle{definition}

\newtheorem{theorem}{Theorem}
\newtheorem{lemma}{Lemma}
\newtheorem{remark}{Remark}

\newtheorem{proposition}[theorem]{Proposition}

\newcommand{\R}{\mathbb{R}}

\DeclareMathOperator*{\minimize}{minimize}
\DeclareMathOperator*{\maximize}{maximize}
\DeclareMathOperator*{\subto}{subject\; to}

\setlength{\textwidth}{160mm} \setlength{\topskip}{-1cm}
\setlength{\topmargin}{-25mm} \setlength{\textheight}{230mm}
\setlength{\abovedisplayskip}{3mm} \setlength{\belowdisplayskip}{3mm}

\oddsidemargin -1mm   
\evensidemargin -1mm  %
\topmargin -20mm
\headheight 10mm
\headsep 15mm
\parindent 7mm   
\parskip 0em     

\begin{document}

\title{Algorithms for the continuous nonlinear resource allocation
  problem---new implementations and numerical studies} \author{Michael
  Patriksson\footnote{Department of Mathematical Sciences, Chalmers University
    of Technology, SE-412 96 Gothenburg, Sweden, e-mail: mipat@chalmers.se}
  \/ and Christoffer Str\"{o}mberg\footnote{\"Ostra Allekullav\"{a}gen 98, SE-43
    991 Onsala, Sweden, e-mail: chris.stromberg@icloud.com}} \maketitle

\begin{abstract}
Patriksson~\cite{Pat08} provided a then up-to-date survey on the continuous,
separable, differentiable and convex resource allocation problem with a single
resource constraint. Since the publication of that paper the interest in the
problem has grown: several new applications have arisen where the problem at
hand constitutes a subproblem, and several new algorithms have been developed
for its efficient solution. This paper therefore serves three purposes. First,
it provides an up-to-date extension of the survey of the literature of the
field, complementing the survey in Patriksson~\cite{Pat08} with more then 20
books and articles. Second, it contributes improvements of some of these
algorithms, in particular with an improvement of the pegging (that is,
variable fixing) process in the relaxation algorithm, and an improved means to
evaluate subsolutions. Third, it numerically evaluates several relaxation
(primal) and breakpoint (dual) algorithms, incorporating a variety of pegging
strategies, as well as a quasi-Newton method. Our conclusion is that our
modification of the relaxation algorithm performs the best. At least for
problem sizes up to 30 million variables the practical time complexity for the
breakpoint and relaxation algorithms is linear.
%
\end{abstract}
\section{Introduction}\label{sec:intro}
We consider the continuous, separable, differentiable and convex resource
allocation problem with a single resource constraint. The problem is
formulated as follows: Let $J := \{ 1,2, \dotsc ,n \}$. Let $\phi_j:\mathbb{R}
\rightarrow \mathbb{R}$ and $g_j:\mathbb{R} \rightarrow \mathbb{R}$, $j \in
J$, be convex and continuously differentiable. Moreover, let $b \in
\mathbb{R}$ and $-\infty < l_j<u_j < \infty$, $j \in J$. Consider the problem
to
\begin{subequations}
\begin{align}
        \minimize_x &\quad \phi(\mathbf{x}) := \sum_{j \in J} \phi_j(x_j),
        \\ \subto & \quad g(\mathbf{x}):= \sum _{j \in J} g_j(x_j) \le
        b, \label{problemb} \\ & \quad l_j \le x_j \le u_j, \quad j \in
        J. \label{problemc}
\end{align}
\label{problem}
\end{subequations}

We also consider the problem where the inequality constraint (\ref{problemb})
is replaced by an equality, i.e.,
\begin{subequations}
\begin{align}
        \minimize_x &\quad \phi(\mathbf{x}) := \sum_{j \in J}
        \phi_j(x_j),\\ \subto & \quad g(\mathbf{x}):= \sum_{j \in J} a_jx_j =
        b, \label{problemeqb} \\ & \quad l_j \le x_j \le u_j, \quad j \in
        J, \label{problemeqc}
\end{align}
\label{problemeq}
\end{subequations}
where $a_j \neq 0$, $j \in J$, and the sign is the same for all $j \in
J$. Further, we assume that there exists an optimal solution to problems
(\ref{problem}) and (\ref{problemeq}). For brevity, in the following
discussions we define $X_j := [l_j, u_j]$, $j \in J$.

Problems (\ref{problem}) and (\ref{problemeq}) arise in many areas, e.g., in
search theory (\cite{Koo99}), economics (\cite{Mar52}), stratified sampling
(\cite{BRS99}), inventory systems (\cite{MaK93}), and queuing manufacturing
networks (\cite{BiT89}). Further, these problems occur as subproblems in
algorithms that solve the integer resource allocation problem
(\cite[Section~4.7]{Mje83}, \cite[pp. 72--75]{IbK88}, and \cite{BrS02}),
multicommodity network flows (\cite[Section~4.2]{Sho85}), and several
others. Moreover, problems (\ref{problem}) and (\ref{problemeq}) can be used
as subproblems when solving resource allocation problems with more than one
resource constraint (\cite{Mje83,FeZ83}), and to solve extensions of problems
(\ref{problem}) and (\ref{problemeq}) to a nonseparable objective function
$\phi$ (\cite{Mje83,DSV07}); The books \cite{Mje83,IbK88,Lus12} describe
several extensions, such as to minmax/maxmin objectives, multiple time
periods, substitutable resources, network constraints, and integer decision
variables.

Many numerical studies of the problems (\ref{problem}) and (\ref{problemeq})
have been performed; for example, see
\cite{BiH81,NiZ92,RJL92,KoL98,Kiw07,Kiw08a,Kiw08b}. Our numerical study is
however timely and well motivated, since except for those by Kiwiel
\cite{Kiw07,Kiw08a,Kiw08b}, where the quadratic knapsack problem is studied,
none of the earlier approaches study large-scale versions of problem
(\ref{problem}) or (\ref{problemeq}). There are also several algorithms (e.g.,
\cite[Section 1.4]{NiZ92} and \cite{Ste01}) which are claimed to be promising,
but have not been evaluated in a large-scale study. Only one earlier study
(\cite{KoL98}) evaluates the performance of algorithms for the problems
(\ref{problem}) and (\ref{problemeq}) with respect to variations in the
portions of the variables whose values are at a lower or upper bound at the
optimal solution (see Section \ref{sec:dop}), and this is done for modest size
instances ($n = 10^4$) only. Further, no study has been done on the
computational complexity for non-quadratic versions of the problems
(\ref{problem}) or (\ref{problemeq}). Our numerical study also incorporates
improvements of the relaxation algorithm, as presented in
Sections~\ref{sec:bh4}--\ref{sec:tandf}, and utilizes performance profiles
(\cite{DoM02}).

As a final note on the computational tests, we only consider problem instances
where the dual variable corresponding to the resource constraint
(\ref{problemb}), respectively (\ref{problemeqb}), can be found in closed
form; otherwise, we would need to implement a numerical method in some of the
steps, e.g., a Newton method. We consider only customized algorithms for the
problem at hand, since we presume that they perform better than more general
algorithms under the above assumption.

Patriksson \cite{Pat08} presents a survey of the history and applications of
problems (\ref{problem}) and (\ref{problemeq}). Since its publication several
related articles have been published; the survey of \cite{Pat08} is therefore
complemented in Section~\ref{sec:extsur}. Section~\ref{sec:rank} presents a
framework of breakpoint algorithms, resulting in three concrete
representatives.  Section~\ref{sec:relaxation} presents a framework of
relaxation algorithms, and ultimately six concrete example methods. In Section
\ref{sec:zen_algor} we describe a quasi-Newton method, due to Nielsen and
Zenios~\cite{NiZ92}, for solving the problem
(\ref{problemeq}). Section~\ref{sec:method} describes the numerical study. A
test problem set is specified and the performance profile used for the
evaluation is defined.  In Section~\ref{sec:compexp}, we analyze the results
from the numerical study. The structure is such that we first compare the
relaxation algorithms, second the pegging process, and third the best
performing algorithms among these two with the quasi-Newton method. Finally,
we draw overall conclusions.

\section{Extension of the survey in \cite{Pat08} } \label{sec:extsur}

We here extend the survey in \cite{Pat08}, using the same taxonomy, and sorted
according to publication date.

\begin{description}
\item[{\cite{Mje83}}] {\sc K.\ M.\ Mjelde}, {\em Methods of the Allocation of
  Limited Resources}, Section 4.7
\begin{description}
\item[{\sf (Problem)}] $\phi_j \in C^2$; linear equality ($a_j=1$); $l_j=0$
\item[{\sf (Methodology)}] The ranking algorithm of \cite{LuG75}
\item[{\sf (Citations)}] Applications in capital budgeting
  (\cite{Han68,Shi77}), cost-effectiveness problems
  (\cite{Kir68,Pac70,Mje78}), health care (\cite{Fet73}), marketing
  (\cite{LuG75}), multiobjective optimization (\cite{Geo67}), portfolio
  selection (\cite{JuD75}), production (the internal report leading to
  \cite{BiH79}), reliability (\cite{Bod69}), route-planning for ships or
  aircraft (\cite{DBR66}), search (\cite{ChC58}), ship loading (\cite{Kyd69}),
  and weapons selection (\cite{Dan67})
\item[{\sf (Notes)}] A monograph on resource allocation problems containing a
  comprehensive overview of the resource allocation problem, including
  extensions to several resources, non-convex or non-differentiable
  objectives, integral decision variables, fractional programming
  formulations, etcetera.
\end{description}
\end{description}

\begin{description}
\item[{\cite{Sho85}}] {\sc N.\ Z.\ Shor}, {\em Minimization Methods for
  Nondifferentiable Functions}, Section 4.2
\begin{description}
\item[{\sf (Problem)}] $\phi_j(x_j) = \frac{1}{2}(x_j-y_j)^2$; linear equality
  ($a_j=1$); $l_j=0$
\item[{\sf (Methodology)}] Pegging
\item[{\sf (Citations)}] \cite{ShI69}, in which the motivating linear
  programming application is described
\item[{\sf (Notes)}] The problem arises within the framework of a right-hand
  side allocation algorithm for a large-scale linear program.
\end{description}
\end{description}

\begin{description}
\item[{\cite{HuZ05}}] {\sc Z.-S.\ Hua and B.\ Zhang}, {\em Direct algorithm for
  separable continuous convex quadratic knapsack problem} (in Chinese)
\begin{description}
\item[{\sf (Problem)}] $\phi_j(x_j) = \frac{q_j}{2}x^2_j - r_jx_j$; linear
  inequality ($a_j > 0$); $l_j=0$
\item[{\sf (Methodology)}] Pegging
\item[{\sf (Citations)}] Algorithms for the problem
  (\cite{PaK90,MeR00,BrS02,BrS02b}) as well as for the case of integer
  variables
\item[{\sf (Notes)}] A numerical illustration ($n = 6$).
\end{description}
\end{description}

\begin{description}
\item[{\cite{DaF06}}] {\sc Y.-H.\ Dai and R.\ Fletcher}, {\em New algorithms
  for singly linearly constrained quadratic programs subject to lower and
  upper bounds}
\begin{description}
\item[{\sf (Problem)}] $\phi_j(x_j) = \frac{q_j}{2}x^2_j - r_jx_j$, $q_j>0$;
  $g_j$ convex in $C^2$ with $g'(x_j) > 0$
\item[{\sf (Methodology)}] A combination of a bracketing algorithm on the
  Lagrangian dual derivative, and a secant algorithm for the Lagrangian dual
  problem
\item[{\sf (Citations)}] Algorithms for the problem
  (\cite{HKL80,Bru84,CaM87,PaK90})
\item[{\sf (Notes)}] The problem arises as a subproblem in a gradient
  projection method for a general quadratic programming problem over a scaled
  simplex.
\end{description}
\end{description}

\begin{description}
\item[{\cite{LiS06}}] {\sc D.\ Li and X.\ Sun}, {\em Nonlinear Integer
  Programming}, Chapter 6: {\rm Nonlinear Knapsack Problems}, Section 6.1:
  {\rm Continuous--Relaxation-based Branch--and--Bound Methods}
\begin{description}
\item[{\sf (Problem)}] $\phi_j$ and $g_j$ increasing; $g_j$ convex in $C^2$
  with $g'_j > 0$
\item[{\sf (Methodology)}] Multiplier search
\item[{\sf (Citations)}] Multiplier search methods (\cite{BrS95}), pegging
  methods (\cite{BrS02,BrS02b})
\item[{\sf (Notes)}] The problem arises as a subproblem in branch--and--bound
  methods for the integer programming version of the problem, such as for the
  quadratic knapsack problem, stratified sampling, manufacturing capacity
  planning, linearly constrained redundancy optimization in reliability
  networks, and linear cost minimization in reliability networks.
\end{description}
\end{description}

\begin{description}
\item[{\cite{DSV07}}] {\sc K.\ Dahiya, S.\ K.\ Suneja and V.\ Verma}, {\em
  Convex programming with a single separable constraint and bounded variables}
\begin{description}
\item[{\sf (Problem)}] $\phi_j(x_j) = \frac{q_j}{2}x^2_j - r_jx_j$, $q_j>0$;
  $g_j$ convex in $C^2$ with $g'(x_j) > 0$; studies also the special case of a
  linear equality
\item[{\sf (Methodology)}] Iterative descent process using strictly convex
  quadratic separable approximations of a nonseparable original objective $f
  \in C^2$; subproblems solved using the pegging algorithm of \cite{Ste01}
\item[{\sf (Citations)}] General references on convex programming over box
  constraints; \cite{HKL80,DFL86,PaK90} for example algorithms for separable
  convex programming
\item[{\sf (Notes)}] Numerical QP ($n=6$, $g_j$ quadratic) illustration;
  numerical comparison with an augmented Lagrangian algorithm for a small
  problem ($n=2$). 
\end{description}
\end{description}

\begin{description}
\item[{\cite{Kiw07}}] {\sc K.\ C.\ Kiwiel}, {\em On linear-time algorithms for
  the continuous quadratic knapsack problem}
\begin{description}
\item[{\sf (Problem)}] $\phi_j(x_j) = \frac{q_j}{2}x^2_j - r_jx_j$, $q_j>0$;
  linear equality
\item[{\sf (Methodology)}] Breakpoint search algorithm applying median search
  of all breakpoints
\item[{\sf (Citations)}] Breakpoint search algorithms:
  \cite{Bru84,CaM87,PaK90,MSMJ03}; sorting and searching methods:
  \cite{Knu98,Kiw05}
\item[{\sf (Notes)}] Develops a general $O(n)$ breakpoint algorithm;
  shows that the algorithms of \cite{PaK90,MSMJ03} may fail even on small
  examples; presents a modification of the breakpoint removal in the algorithm
  of \cite{CaM87}. Numerical experiments ($n \in [50 \cdot 10^3, 2 \cdot
    10^6]$) for uncorrelated, weakly, and strongly correlated data; the new
  algorithm wins in CPU time over those in \cite{Bru84} and \cite{CaM87} by
  23\%, and 21\%, respectively, on average.
\end{description}
\end{description}

\begin{description} 
\item[{\cite{Kiw08a}}] {\sc K.\ C.\ Kiwiel}, {\em Breakpoint searching
  algorithms for continuous quadratic knapsack problem}
\begin{description}
\item[{\sf (Problem)}] $\phi_j(x_j) = \frac{q_j}{2}x^2_j - r_jx_j$, $q_j>0$;
  linear equality
\item[{\sf (Methodology)}] A family of breakpoint search algorithms that
  include several choices of breakpoints for a median search and updates of
  quantities used for evaluating the piecewise linear implicit constraint
  function at the median point
\item[{\sf (Citations)}] Applications in resource allocation
  (\cite{BiH81,BrS97,HoH95}), hierarchical production planning (\cite{BiH81}),
  network flows and transportation (\cite{HKL80,ShM90,Ven91,NiZ92,CoH94}),
  constrained matrix problems (\cite{CDZ86}), quadratic integer programming
  (\cite{BSS95,BSS96,HoH95}), Lagrangian relaxation (\cite{HWC74}), and
  quasi-Newton methods (\cite{CaM87}); $O(n \log n)$ sorting algorithms for
  the solution of the Lagrangian dual problem (\cite{HWC74,HKL80}), $O(n)$
  algorithms based on median search
  (\cite{Bru84,CaM87,MaD89,PaK90,CoH94,HoH95,MMP97}) and approximate median
  search methods with $O(n)$ average-case performance (\cite{PaK90}); primal
  pegging algorithms with $O(n^2)$ worst-case performance
  (\cite{Zip80,BiH81,Mic86,Ven91,RJL92,BSS96})
\item[{\sf (Notes)}] Develops several variants of $O(n)$ breakpoint search
  algorithms, including some ideas earlier proposed in, e.g.,
  \cite{PaK90,CoH94,HoH95,MMP97}; remarks that the more complex choices made
  in \cite{MaD89,PaK90,CoH94,HoH95,MMP97} also means that for some simple
  cases cycling may occur, and also provides convergent modifications for each
  of them. Numerical experiments ($n \in [50 \cdot 10^3, 2 \cdot 10^6]$) for
  uncorrelated and weakly and strongly correlated data, and for both exact and
  inexact computations of the median; comparisons made with the $O(n)$
  versions from \cite{Bru84,CaM87}, reporting that a version (Algorithm 3.1)
  using exact medians is about 20\% faster than the other ones; refers to an
  as yet unavailable technical report from 2006 for more extensive tests and
  comparisons with pegging methods.
\end{description}
\end{description}

\begin{description} 
\item[{\cite{Kiw08b}}] {\sc K.\ C.\ Kiwiel}, {\em Variable fixing algorithms
  for continuous quadratic knapsack problem}
\begin{description}
\item[{\sf (Problem)}] $\phi_j(x_j) = \frac{q_j}{2}x^2_j - r_jx_j$, $q_j>0$;
  linear equality
\item[{\sf (Methodology)}] Pegging
\item[{\sf (Citations)}] Applications: same references as in \cite{Kiw08a};
  breakpoint search methods
  (\cite{HWC74,HKL80,CaM87,MaD89,PaK90,CoH94,HoH95,MMP97,MSMJ03,Kiw07};
  pegging methods (\cite{LuG75,BiH79,BiH81,Sho85,Mic86,Ven91,RJL92,BSS96}) 
\item[{\sf (Notes)}] Develops a basic pegging algorithm and proposes several
  implementational choices for the solution of the reduced problem and the
  updates; shows that the algorithms in \cite{Mic86,RJL92,BSS96} fail on a
  small example, and that there is a gap in the convergence analysis in
  \cite{BiH81} (which also is closed) that affects algorithms whose analyses
  rest on that in \cite{BiH81} (e.g., \cite{Ven91}); provide more efficient
  versions of several of these methods, including the introduction of
  incremental updates which reduce computations, and a more efficient stopping
  criterion. Numerical experiments ($n \in [50 \cdot 10^3, 2 \cdot 10^6]$) for
  uncorrelated and weakly and strongly correlated data; comparisons made with
  the breakpoint search method of \cite{Kiw08a} which uses exact medians; on
  average the latter is 14\% slower while at the same time it has a more
  stable run time; it is remarked that the advantage of pegging over
  breakpoint search has been reported also in \cite{Ven91,RJL92}.
\end{description}
\end{description}

\begin{description} 
\item[{\cite{ZhH08}}] {\sc B.\ Zhang and Z.\ Hua}, {\em A unified method for a
  class of convex separable nonlinear knapsack problems}
\begin{description}
\item[{\sf (Problem)}] $\phi'_j/g'_j$ monotone and invertible, $g'_j$
  positive; consider $\sum_{j=1}^n g_j(x_j) \lhd \; b$ where $\lhd \in \{ \le,
  =, \ge \}$
\item[{\sf (Methodology)}] Pegging algorithm using binary search on the value
  of $\phi'_j/g'_j$
\item[{\sf (Citations)}] Applications: resource allocation
  (\cite{LuG75,Zip80,BiH81,Hoc94}), the singly constrained multi-product
  newsvendor problem (\cite{HaW63,Erl00,AMM04}), production/inventory problems
  (\cite{BSSW94,BSS95,BrS02}), stratified sampling (\cite{BSS95,BRS99,BrS02}),
  ``core subproblem'' (\cite{JuD75,AHKL80,MoT89,RJL92,BSS95,BSS96});
  algorithms: breakpoint methods (\cite{BrS95}, \cite{Ste01}, \cite{LuG75})
  and relaxation methods (\cite{KoL98}, \cite{BrS02}, \cite{BrS02b})
\item[{\sf (Notes)}] Claims (without a proof) that the complexity of the
  algorithm is $O(n)$, but the algorithm presented makes use of a mean-value
  method for the evaluation of the breakpoints which in the worst case results
  in $2n-1$ iterations. We also note that each iteration consist of $O(n)$
  operations, whence the complexity is $O(n^2)$. Numerical experiments ($n \in
  [10, 10^4]$) ($\phi$ quadratic, $g_j$ linear, $\lhd$ chosen uniformly
  randomly).
 \end{description}
\end{description}

\begin{description} 
\item[{\cite{WaW12}}] {\sc A.\ De Waegenaere and J.\ L.\ Wielhouwer}, {\em A
  breakpoint search approach for convex resource allocation problems with
  bounded variables}
\begin{description}
\item[{\sf (Problem)}] $\phi_j$ and $g_j$ convex; $g_j$ strictly monotone and
  $g_j([\phi_j'/g_j']^{-1})$ is either strictly increasing or strictly
  decreasing for all $j$
\item[{\sf (Methodology)}] Breakpoint search algorithm using a refined pegging
  method (5-sets pegging); generalizes the quadratic breakpoint algorithm in
  \cite{PaK90} and its extension in \cite{Kiw08a} such that it applies for the
  problem setting in \cite{BrS02}
\item[{\sf (Citations)}] Generalizes the quadratic breakpoint algorithm in
  \cite{PaK90} and its extension in \cite{Kiw08a} such that it is valid for
  $f_j$ and $g_j$ as in \cite{BrS02}. Applications in resource allocation
  \cite{PaK90,NiZ92,BrS95,VeW95,BSS96,BrS02,WaW02,CLZ03,BSSV06,Pat08}
\item[{\sf (Notes)}] Discuss their algorithms' advantages compared to other
  articles presented in \cite{Pat08}.
 \end{description}
\end{description}

\begin{description} 
\item[{\cite{KiW12}}] {\sc G.\ Kim and C.-H.\ Wu},
 {\em A pegging algorithm for separable continuous nonlinear knapsack problems
      with box constraints}
\begin{description}
\item[{\sf (Problem)}] $\phi'_j$ invertible: linear equality ($a_j > 0$)
\item[{\sf (Methodology)}] Pegging; improves the pegging algorithm in
  \cite{BiH81} by allowing the primal box constraints to be checked
  implicitly, using bounds on their Lagrange multipliers
\item[{\sf (Citations)}] Applications to portfolio selection \cite{Mar52},
  multicommodity network flows \cite{AHKL80}, transportation \cite{OhK84},
  production planning \cite{Tam80}
\item[{\sf (Notes)}] Compares their methodology with the method in
  \cite{BiH81} on random test problems. On randomized quadratic continuous
  knapsack problems ($n \in [5 \cdot 10^3, 2 \cdot 10^6]$) their algorithms
  wins by 8--10\%, except for the smallest problems where the method in
  \cite{BiH81} wins by $12.5\%$. Two other types of test problems are also
  investigated, wherein the quadratic continuous knapsack problem arises as a
  subproblem: quadratic network flows, and portfolio optimization. In the
  former case, the algorithm (based on conjugate gradients) is taken from
  \cite{Ara00}. Here, the pegging algorithm proposed wins over that in
  \cite{BiH81} by 10--48\%, with $n \in [200, 5 \cdot 10^3]$. In the
  portfolio optimization algorithm, which is based on a progressive hedging
  method described in \cite{Ara00}, the quadratic continuous knapsack problem
  arises as a subproblem. Here, the range of $n$ is not completely disclosed;
  however, the speed-up over the method in \cite{BiH81} is reported to be
  21--25\%.
\end{description}
\end{description}

\begin{description} 
\item[{\cite{Lus12}}] {\sc H.\ Luss}, {\em Equitable Resource Allocation:
  Models, Algorithms and Applications}, Chapter 2: {\em Nonlinear Resource
  Allocation}
\begin{description}
\item[{\sf (Problem)}] $\phi_j$ strictly convex; $g_j(x_j) = x_j$
 \item[{\sf (Methodology)}] Pegging
\item[{\sf (Citations)}] \cite{Koo53} as an origin; \cite{KoL98} for
  computational examples; \cite{Pat08} as a survey
\item[{\sf (Notes)}] This book extends the resource allocation problem
  (discussed only in Chapter 2) in several ways, including equitable
  optimization through the use of minmax/maxmin objectives, multiple time
  periods, substitutable resources, network constraints, and integer decision
  variables.
 \end{description}
\end{description}

\begin{description} 
\item[{\cite{ZhC12}}] {\sc B.\ Zhang and B.\ Chen}, {\em Heuristic and exact
  solution method for convex nonlinear knapsack problem}
\begin{description}
\item[{\sf (Problem)}] $\phi_j$ strictly convex; $g_j(x_j) = x_j$
 \item[{\sf (Methodology)}] The problem at hand is a subproblem in a
   branch--and--bound procedure for the solution of an integer-restricted
   version of the problem
\item[{\sf (Citations)}] Applications to the newsvendor problem
  (\cite{Erl00,AMM04}), resource allocation (\cite{BiH81,Hoc94}), production
  (\cite{BSS95}), and stratified sampling (\cite{BRS99}); efficient methods
  for the continuous relaxation (\cite{BrS95,KoL98,Ste01,ZhH08}); heuristics
  for the integer program based on rounding of the solution to the continuous
  relaxation (\cite{BrS95,HuZ06}); algorithms for the integer program based on
  the solution of continuous problems and branch--and--bound
  (\cite{BrS95,BrS02})
\item[{\sf (Notes)}] Utilizing the algorithm from \cite{ZhH08} to solve the
  continuous relaxations (and rounding to produce feasible solutions), the
  authors develop a branch--and--bound algorithm. It is compared with the
  methods from \cite{BrS95} as well as with branch--and--bound algorithms
  utilizing a variety of tree search principles, on instances with quadratic
  objectives, according to problem generation principles from \cite{BrS02} ($n
  \in \{10, 15, 500, 1000, 2000\}$).
 \end{description}
\end{description}

\begin{description} 
\item[{\cite{BGRS13}}] {\sc L.\ Bay{\'o}n, J.\ M.\ Grau, M.\ M.\ Ruiz, and
  P.\ M.\ Su{\'a}rez}, {\em An exact algorithm for the continuous quadratic
  knapsack problem via infimal convolution}
\begin{description}
\item[{\sf (Problem)}] $\phi_j$ strictly convex quadratic; linear equality
  ($a_j = 1$)
\item[{\sf (Methodology)}] Sorting of breakpoints
\item[{\sf (Citations)}] Previous algorithms for the problem
  (\cite{MeT93,CoH94})
\item[{\sf (Notes)}] Two numerical applications: (1) an economic dispatch
  problem ($n = 5$), and an academic example ($n \in [200, 10^4]$); in the
  latter example the results are favourably compared with the Matlab solver
  QUADPROG.
 \end{description}
\end{description}

\begin{description} 
\item[{\cite{DHH13}}] {\sc T.\ A.\ Davis, W.\ W.\ Hager, and
  J.\ T.\ Hungerford}, {\em The separable convex quadratic knapsasck problem}
\begin{description}
\item[{\sf (Problem)}] $\phi_j(x_j) = \frac{q_j}{2}x^2_j - r_jx_j$; $q_j>0$;
  linear equality
\item[{\sf (Methodology)}] Breakpoint search utilizing a heap data structure,
  based on an initial multiplier estimate using a secant Newton method
\item[{\sf (Citations)}] Applications (\cite{HKL80,CaM87,ShM90,NiZ92,DaF06});
  multiplier algorithms (\cite{HKL80,Bru84,CaM87,PaK90,MSMJ03}); pegging
  methods (\cite{BiH81,Sho85,Mic86,Ven91,RJL92,BSS96,Kiw08b}); quasi-Newton
  methods (\cite{DaF06,CMS14})
\item[{\sf (Notes)}] Numerical experiments ($n = 6.25 \cdot 10^6$) on random
  test problems, examining (a) the best number of initial (secant) Newton
  iterations, and (b) the performance against a primal pegging algorithm.
  Overall, breakpoint search is favourable on its own given a good initial
  mulitiplier estimate; otherwise, 3 or 4 iterations of the secant Newton
  method is a good initialization procedure.
\end{description}
\end{description}

\begin{description} 
\item[{\cite{FrG13}}] {\sc A.\ Frangioni and E.\ Gorgone}, {\em A library for
  continuous convex separable quadratic knapsack problems}
\begin{description}
\item[{\sf (Problem)}] $\phi_j(x_j) = \frac{q_j}{2}x^2_j - r_jx_j$; $q_j>0$;
  $g_j(x_j) = x_j$
\item[{\sf (Methodology)}] Compares a breakpoint algorithm with CPLEX and
  concludes that the breakpoint algorithm outperforms CPLEX
\item[{\sf (Citations)}] Applications in resource allocation and algorithms
  (\cite{Pat08})
\item[{\sf (Notes)}] Presents an open source library for the continuous convex
  separable quadratic knapsack problem and concludes that the library can be
  useful for further studies of the problem at hand.
 \end{description}
\end{description}

\begin{description} 
\item[{\cite{ZCCS13}}] {\sc T.\ Zhu, W.\ Chen, J.\ Chen, and W.\ Sun}, {\em
  Direct algorithm for continuous separable knapsack problem (in Chinese)}
\begin{description}
\item[{\sf (Problem)}] $\phi_j(x_j) = \frac{q_j}{2}x^2_j - r_jx_j$; $q_j>0$;
  $g_j(x_j) = a_jx_j$
\item[{\sf (Methodology)}] Pegging
\item[{\sf (Citations)}] Algorithms for the more general integer version of
  the problem; algorithms by Bretthauer and Shetty
  (\cite{BrS95,BrS02,BrS02b}); other specialized algorithms for the problem
  (\cite{BiH81,PaK90,RJL92,Ste01,Kiw08b})
 \item[{\sf (Notes)}] A detailed numerical example ($n = 8$); favourable
   comparisons with a Matlab solver ($n \in \{50, 100, 200\}$).
 \end{description}
\end{description}

\begin{description} 
\item[{\cite{Zha13}}] {\sc L.\ Zhang}, {\em A Newton-type algorithm for
  solving problems of search theory}
\begin{description}
\item[{\sf (Problem)}] $\phi_j(x_j) = -a_j(1 - {\rm exp}(-c_j x_j))$, $a_j,
  c_j > 0$; $X$ is a scaled unit simplex
\item[{\sf (Methodology)}] The KKT conditions, as defined in
  (\ref{eq:sep_conv_ineq_constr_kkt}), are relaxed into a system of non-smooth
  equations through the utilization of the Fischer--Burmeister (\cite{Fis92})
  smoothing function; a Newton-like algorithm is then employed for these
  equations for a sequence of values of the smoothing parameter. The algorithm
  is shown to asymptotically and superlinearly converge to the unique optimal
  solution
\item[{\sf (Citations)}] Survey (\cite{Pat08}); methodologies (\cite{Ste04});
  applications (\cite{Koo99})
\item[{\sf (Notes)}] Three sets of numerical experiments. Main experiment ($n
  \in [10^2, 10^4]$) on randomized problems; compares with the pegging
  algorithm in \cite{Ste04}, noting that the proposed algorithm is faster. The
  proposed methodology is however terminated based on a nonzero value of the
  Fischer--Burmeister smoothing function, whence the final solution need not
  be feasible or optimal. Second set of experiments on a problem taken from
  \cite{WiG69} ($n = 4$), showing no comparisons. Third experiment on data
  from the Bureau of Water Conservatory ($n = 5$), showing no comparisons.
 \end{description}
\end{description}

\begin{description} 
\item[{\cite{CMS14}}] {\sc R.\ Cominetti, W.\ F.\ Mascarenhas, and
  P.\ J.\ S.\ Silva}, {\em A Newton's method for the continuous quadratic
  knapsack problem}
\begin{description}
\item[{\sf (Problem)}] $\phi_j$ strictly convex quadratic; linear equality
\item[{\sf (Methodology)}] An approximative Newton method for the Lagrangian
  dual problem utilizing a secant globalization and variable fixing
\item[{\sf (Citations)}] Applications to resource allocation
  (\cite{BiH81,HoH95,BrS97}), multicommodity network flows
  (\cite{HKL80,NiZ92,ShM90}), Lagrangian relaxation using subgradient methods
  (\cite{HWC74}), quasi-Newton updates with bounds (\cite{CaM87}), semismooth
  Newton methods (\cite{FeM04}); related methods (\cite{DaF06,Kiw08b}), where
  the latter method is shown to be equivalent to the proposed one when there
  are only lower bounds {\sl or} lower bounds
\item[{\sf (Notes)}] Numerical experiments ($n \in [50 \cdot 10^3, 2 \cdot
  10^6]$) comparing the proposed method to a secant method (\cite{DaF06}),
  breakpoint search (\cite{Kiw08b}), and median search (\cite{Kiw08a} on
  problems with uncorrelated, weakly correlated, and correlated data. The
  proposed Newton method is overall better---about 30\% better on the larger
  instances. A further test is made on the classification problems described
  in \cite{DaF06} arising in the training in support vector machines; as the
  Hessian is non-diagonal, a projected gradient method is used, leading to
  subproblems of the type considered. Here, Newton's method is superior.
 \end{description}
\end{description}

\begin{description} 
\item[{\cite{WrR14}}] {\sc S.\ E.\ Wright and J.\ J.\ Rohal}, {\em Solving the
 continuous nonlinear resource allocation problem with an interior point
  method}
\begin{description}
\item[{\sf (Problem)}] $\phi_j$ and $g_j$ convex, and in $C^2$ on an open set
  containing $[l_j, u_j]$. Further, $\phi_j$ is decreasing on $[l_j, u_j]$ and
  $g_j$ is increasing on $[l_j, u_j]$, and $\sum_{j \in J} g_j(l_j) < b <
  \sum_{j \in J} g_j(u_j)$. Test instances include resource renewal
      [$\phi_j(x_j) := c_jx_j (e^{-1/x_j} - 1)$ for $x_j > 0$, $\phi_j(x_j) :=
        -x_j$ for $x_j < 0$, and $g_j$ linear], weighted $p$-norm over a ball
      [$\phi_j(x_j) := c_j(x_j - y_j)^p$ and $g_j(x_j) := |x_j|^r$ with $p, r
        \in \{2, 2.5, 3, 4\}$], sums of powers [$\phi_j(x_j) := c_j|x_j -
        y_j|^{p_j}$, and $g_j(x_j) := |x_j|^{r_i}$], convex quartic over a
      simplex [$\phi_j$ a fourth-power polynomial, and $g_j(x_j) := x_j$], and
      log-exponential [$\phi_j(x_j) := \ln [\sum_i {\rm exp}(a_{ij}x_j +
          d_{ij})]$, $g_j$ linear]
\item[{\sf (Methodology)}] A damped feasible interior-point Newton method for
  the solution of the KKT conditions
\item[{\sf (Citations)}] Survey (\cite{Pat08}); application to resource
  renewal (\cite{MeR00}); methodologies
  (\cite{Bru84,PaK90,RJL92,MeR00,Kiw08a})
\item[{\sf (Notes)}] The algorithm is introduced for problems where
  subsolutions are not available in closed form. Shows that the linear system
  defining the Newton search direction is solvable in $O(n)$ time. Numerical
  experiments ($n \in [10^2, 10^6]$) conclude that the interior point method
  wins over breakpoint search, often by an order of magnitude.
 \end{description}
\end{description}

\section{Breakpoint algorithms}\label{sec:rank}

Algorithms based on the Lagrangian relaxation of the explicit constraint
(\ref{problemb}) have an older history than the relaxation algorithms. This is
probably due to the fact that the relaxation algorithm quite strongly rests on
the Karush--Kuhn--Tucker (KKT) conditions, which did not become widely
available until the end of the 1940s and early 1950s with the work of F.\ John
\cite{Joh48}, W.\ Karush \cite{Kar39}, and H.~W. Kuhn and A.~W. Tucker
\cite{KuT51}. Lagrangian based algorithms have been present much longer and
the famous ``Lagrange multiplier method'' for equality constrained
optimization is classic in the calculus curriculum. Indeed, Lagrange
multiplier techniques for our problem (\ref{problem}) date back at least as
far as to \cite{Bec52}; see \cite{Pat08} for a survey of the history of the
problem.

Considering problem (\ref{problem}) and introducing the Lagrange multiplier
$\mu$ for constraint (\ref{problemb}), we obtain the following conditions for
the optimality of $\mathbf{x}^*$ in (\ref{problem}):
\begin{subequations}
\label{eq:sep_conv_ineq_constr_kkt}
\begin{align}
\mu ^* \geq 0, \quad g(\mathbf{x}^*) & \leq b, \quad \mu^* (
g(\mathbf{x}^*)-b) = 0,
\label{eq:sep_conv_ineq_constr_kkt1} \\
x_j^* & \in X_j, \qquad j \in J, \label{eq:sep_conv_ineq_constr_kkt2}
\end{align}
and
\begin{align} 
x_j^* = l_j, & \qquad \text{ if } \phi_j'(x_j^*) \geq - \mu^* g_j'(x_j^*),
\qquad j \in J, \label{eq:sep_conv_ineq_constr_kkt3} \\ x_j^* = u_j, & \qquad
\text{ if } \phi_j'(x_j^*) \leq - \mu^* g_j'(x_j^*), \qquad j \in
J, \label{eq:sep_conv_ineq_constr_kkt4} \\ l_j \leq x_j^* \leq u_j, & \qquad
\text{ if } \phi_j'(x_j^*) = - \mu^* g_j'(x_j^*), \qquad j \in J.
\label{eq:sep_conv_ineq_constr_kkt5}
\end{align}
\end{subequations}
For a fixed optimal value $\mu^*$ of the Lagrange multiplier the conditions
(\ref{eq:sep_conv_ineq_constr_kkt3})--(\ref{eq:sep_conv_ineq_constr_kkt5}) are
the optimality conditions for the minimization over $\mathbf{x} \in
\prod_{j=1}^n X_j$ of the Lagrangian function defined on $\prod_{j=1}^n X_j
\times \R_+$,
\[
L(\mathbf{ x}, \mu) := -b\mu + \sum_{j = 1}^n \{ \phi_j(x_j) + \mu g_j(x_j) \}.
\]
Given $\mu \geq 0$ its minimization over $\mathbf{ x} \in \prod_{j = 1}^n X_j$
separates into $n$ problems, yielding the Lagrangian dual function
\begin{equation}
\label{eq:sep_conv_gen_dual}
q(\mu) := -b \mu + \sum_{j = 1}^n \mathop{\rm minimum}_{x_j \in X_j} \, \{
\phi_j(x_j) + \mu g_j(x_j) \}.
\end{equation}

By introducing additional properties of the problem, we can ensure that the
function $q$ is not only concave but finite on $\R_+$ and
moreover differentiable there. Suppose, for example, that for each $j$,
$\phi_j(\cdot) + \mu g_j(\cdot)$ is weakly coercive on $X_j$ for every $\mu
\geq 0$ [that is, that either $X_j$ is bounded or that for every $\mu \geq 0$,
$\phi_j(x_j) + \mu g_j(x_j)$ tends to infinity whenever $x_j$ tends to $\pm
\infty$], and that $\phi_j$ is strictly convex on $X_j$. In this case
the derivative $q'$ exists on $\R_+$ and equals
\[
q'(\mu) = \phi'_j(x_j(\mu)) + \mu g'_j(x_j(\mu)),
\]
where $\mathbf{ x}(\mu)$ is the unique minimum of the Lagrange function
$L(\cdot , \mu)$ over $\prod_{j=1}^n X_j$. Thanks to this simple form of the
dual derivative, the maximum $\mu^*$ of $q$ over $\R_+$ can be characterized
by the complementarity conditions (\ref{eq:sep_conv_ineq_constr_kkt1}), and
the conditions (\ref{eq:sep_conv_ineq_constr_kkt}) are the primal--dual
optimality conditions for the pair of primal--dual convex programs.

If we assume that $\mu^* \neq 0$, we search for $\mu^* > 0$ such that
$q'(\mu^*) = 0$ [or, in other words, $g(\mathbf{ x}(\mu^*)) = b$], that is, we
need to solve a special equation in the unknown entity $\mu$, where the
function $q'$ is implicitly defined, but is known to be decreasing since $q$
is concave. This equation can of course be solved through the use of any
general such procedure [for example, {\sl bisection search} takes two initial
  values $\overline{\mu}$ and $\underline{\mu}$ with $q'(\overline{\mu}) < 0$
  and $q'(\underline{\mu}) > 0$, then iteratively cancels part of the initial
  interval given the sign of $q'$ at its midpoint $(\overline{\mu} +
  \underline{\mu})/2$], but the structure of $q'$ makes specialized algorithms
possible to utilize.

From the above optimality conditions for the Lagrangian minimization problem,
we obtain that
\begin{equation}\label{xmu}
x_j (\mu) = \begin{cases} l_j, & \text{if } \mu \geq \mu_j^l :=
  -\phi'_j(l_j)/g'_j(l_j), \cr u_j, & \text{if } \mu \leq \mu_j^u := -
  \phi'_j(u_j)/g'_j(u_j), \qquad j \in J. \cr x_j, & \text{if }
  \phi'_j(x_j) + \mu g'_j(x_j) = 0,
\end{cases}
\end{equation}
In a rudimentary algorithm we order these indices (or, breakpoints) $\mu_j^l$
and $\mu_j^u$ in an increasing (for example) order into $\{ \mu_1, \dotsc,
\mu_N \}$, where $N \leq 2n$ due to the possible presence of ties. Finding
$\mu^*$ then amounts to finding an index $\jmath^*$ such that that
$q'(\mu_{\jmath*}) > 0$ and $q'(\mu_{\jmath* + 1}) < 0$; then we know that
$\mu^* \in (\mu_{\jmath*}, \mu_{\jmath* +1})$ and $q'(\mu^*) = 0$.  Hence from
equation (\ref{xmu}), we know for all $j$ if $x_j^* = l_j$, $x_j^* = u_j$ or
$l_j < x_j^* < u_j$.  Now by fixing all variables $x_j^*$ that equal the
corresponding lower or upper bound we can ignore the bound constraint
(\ref{problemc}) and find an analytical solution of the problem.

Two decisions thus need to be made: how to find the index $\jmath^*$, and how
to perform the interpolation. Starting with the former, the easiest means is
to run through the indices in ascending or descending order to find the index
where $q'$ changes sign.  If we have access to the indices $\jmath^+$ and
$\jmath^-$ for which $q'(\mu_{\jmath^+}) > 0$ while $q'(\mu_{\jmath^-}) < 0$,
then we can choose the midpoint index, check the corresponding sign of $q'$,
and reduce the index set accordingly.  Given the sorted list, we can also find
this index in some randomized fashion.

As remarked above, algorithms such as bisection search can be implemented
without the use of the breakpoints, and therefore without the use of sorting,
as long as an initial interval can somehow be found; also general methods for
solving the equation $q'(\mu) = 0$, such as the secant method or regula falsi,
can be used even without an initial interval; notice however that $q \not\in
C^2$, whence a pure Newton method is not guaranteed to be well-defined.

While the sorting operation used in the ranking and bisection search methods
takes $O(n \log n)$ time, it is possible to lower the complexity by choosing
the trial index based on the {\sl median} index, which is found without the
use of sorting; the complexity of the algorithm is then reduced to $O(n)$. It
is not clear, however, that the latter must always be more efficient, since
the ``$O$'' definition calls for $n$ to be ``large enough''.

We also remark that in the case when the problem (\ref{problem})
arises as a subproblem in an iterative method, as the method converges the
data describing the problem will tend to stabilize. This fact motivates the
use of reoptimization of the problems, which most obviously can be done by
using the previous value of the Lagrange multiplier as a starting point and/or
utilizing the previous ordering of the breakpoints; in the latter case, the
$O(n \log n)$ sorting complexity will eventually drop dramatically.

In Section \ref{sec:equalityconstraint} we consider the breakpoint algorithm
for the equality problem (\ref{problemeq}). In Section \ref{sec:pegbreak} we
describe three pegging methods and in Sections
\ref{sec:medians}--\ref{sec:MB5} we apply these pegging methods to the
breakpoint algorithm. Finally, in Section \ref{sec:conbreak} we briefly discuss
the convergence and time complexity of the breakpoint algorithm.

\subsection{Equality constraints}\label{sec:equalityconstraint}
We now consider problem (\ref{problemeq}) where the inequality of the primal
constraint is replaced by an equality. For the problem to be convex, the
resource constraint (\ref{problemeqb}) has to be affine, i.e.,
$g(\mathbf{x}):=\sum_{j \in J} a_jx_j - b$. Beside the resource constraint,
the Lagrangian and the optimality conditions will take the same form as for
problem (\ref{problem}) but with one important difference; $\mu$ is
unrestricted in sign, whence the condition
(\ref{eq:sep_conv_ineq_constr_kkt1}) is replaced by ``$g(\mathbf{x}^*) = b$''.


\subsection{The pegging process}\label{sec:pegbreak}

The origin of the pegging process is found in the relaxation algorithm; see,
e.g., \cite{BiH81}. The purpose of pegging is to predict properties of the
primal variables in the optimal solution from an arbitrary dual value. In
Sections \ref{sec:taop}, \ref{sec:3set}, and \ref{sec:5set} we show how to
predict if an optimal primal variable value equals its lower or upper bound,
or is strictly within any of the bounds.

\subsubsection{2-sets pegging}\label{sec:taop}

If we can determine if a variable equals its lower bound at the optimal
solution then we add its variable index to a set $L$ and reduce the original
problem. Similarly, if we know that a variable equals its upper bound at the
optimal solution then we add the variable index to the set $U$. Using the sets
$L$ and $U$ when solving problem (\ref{problem}) or (\ref{problemeq}) will be
referred to as 2-sets pegging.

Assume that we have a lower limit $\underline{\mu}$ and an upper limit
$\overline{\mu}$ on the optimal dual value, that is, $\underline{\mu} \le
\mu^* \le \overline{\mu}$.  From (\ref{xmu}) we can define the sets
$L(\underline{\mu}) := \{ j \in J \mid \underline{\mu} \ge -\phi_j^\prime(l_j)
/ g_j^\prime(l_j) \}$ and $U(\overline{\mu}) := \{ j \in J \mid \overline{\mu}
\le -\phi_j^\prime(u_j) / g_j^\prime(u_j) \}$. Let $J^k := J \setminus \{
L(\underline{\mu}) \cup U(\overline{\mu}) \}$ and let $b^k := b - \sum_{j \in
  L (\underline{\mu})} g_j(l_j) -\sum_{j \in U(\overline{\mu})}
g_j(u_j)$. Hence we can define a subproblem of problem (\ref{problem}) as
follows:

\begin{subequations}
\begin{align}
        \minimize_x &\quad \phi(\mathbf{x}) := \sum _{j\in J^k}
        \phi_j(x_j), \\ \subto & \quad g(\mathbf{x}):= \sum
        _{j\in J^k} g_j(x_j) \le b^k, \label{problemb2} \\ 
        & \quad l_j \le x_j \le u_j, \quad j \in J^k. \label{problemc2}
\end{align}
\label{problem2}
\end{subequations}
Similarly we can define a subproblem of problem (\ref{problemeq}) as follows:
\begin{subequations}
\begin{align}
        \minimize_x &\quad \phi(\mathbf{x}) := \sum _{j \in J^k}
        \phi_j(x_j),\\ \subto & \quad g(\mathbf{x}):= \sum _{j\in
         J^k} a_jx_j = b^k, \label{problemeqb2} \\ 
          & \quad l_j \le x_j \le u_j, \quad j \in J^k.  \label{problemeqc2}
\end{align}
\label{problemeq2}
\end{subequations}

\noindent Consider problem (\ref{problem2}). Assuming that $\mu^*>0$, the
constraint (\ref{problemb}) has to be fulfilled with equality. For any given
dual variable $\mu^k$ we can determine the primal solution $\mathbf{x}^k(\mu
^k)$ of problem (\ref{problem2}) from condition (\ref{xmu}). We know from
Section \ref{sec:rank} that all optimality conditions
(\ref{eq:sep_conv_ineq_constr_kkt}) except the resource constraint
(\ref{problemb}) are satisfied. Moreover, we know that the resource constraint
has to be fulfilled with equality, in order for the solution to be
optimal. Substituting $\mathbf{x}^k$ into the resource constraint will be
referred to as explicit evaluation; this leaves us with three cases, namely
\begin{subequations}
\begin{align}
       \sum_{j\in J^k}g_j(x_j^k) & = b^k \label{caseopt}, \\
       \sum_{j\in J^k}g_j(x_j^k) & < b^k \label{caselow},  \mathrm{ or } \\
       \sum_{j\in J^k}g_j(x_j^k) & > b^k \label{casehigh}.
\end{align}
\end{subequations}
If (\ref{caseopt}) is fulfilled for $\mathbf{x}^k$ then all optimality
conditions are met and $\mathbf{x}^* = \mathbf{x}^k$.  Consider next the case
(\ref{caselow}); clearly $\mathbf{x}^k$ is not optimal but we know that
$\mathbf{x}^*$ is such that $\sum_{j \in J^k} g_j(x_j^*) > \sum_{j \in J^k}
g_j(x_j^k)$ since $\sum_{j \in J^k} g_j(x_j^*) = b^k$.  The function $g_j$ is
convex and differentiable and can increase in one interval and decrease in
another (consider, e.g., $g_j(x_j) = x^2_j$), which implies that no
predictions can be made of the size of $x_j^*$ relative to that of $x_j^k$.
Hence, we need $g_j$ to be monotone. For problem (\ref{problem}), Bretthauer
and Shetty \cite[Section 2]{BrS02} consider four cases equivalent to the
following:
\begin{description}
\item[Case 1:] For all $j\in J$, $g_j$ is decreasing and
  $\mu(x_j) := -\phi'_j (x_j) / g'_j (x_j)$ is increasing in $x_j$.

\item[Case 2:] For all $j\in J$, $g_j$ is increasing and
  $\mu(x_j)$ is decreasing in $x_j$.

\item[Case 3:] For all $j\in J$, $g_j$ is decreasing and
  $\mu(x_j)$ is decreasing in $x_j$.

\item[Case 4:] For all $j\in J$, $g_j$ is increasing and
  $\mu(x_j)$ is increasing in $x_j$.
\end{description}
If Case 3 or 4 holds it is possible to find a closed form of the optimal
solution to the problem (\ref{problem}), see \cite[Proposition
  10]{BrS02}. Considering problem (\ref{problemeq}), Cases 3 and 4 cannot
occur, since the resource constraint is affine, i.e., $\mu(x_j) := -\phi'_j
(x_j) / a_j$. Hence, only Cases 1 and 2 are of interest here.

Note that if $\mu(x_j)$ is increasing in $x_j$ then $x_j(\mu)$ is
nondecreasing and vice versa. This is an essential property for the validity
of the pegging process. We can indeed state the following proposition (a
similar one can be stated for problem (\ref{problemeq}), but without the
assumption $\mu^* > 0$):
 
\begin{proposition}[pegging for Cases 1 and 2]\label{pegglow}
Consider problem (\ref{problem}) and assume that $\mu^* > 0$.

(i) If Case 1 holds, and if (\ref{caselow}) holds, then $x_j^* = l_j$ for all
$j \in L(\mu^k)$.

(ii) If Case 1 holds, and if (\ref{casehigh}) holds, then $x_j^* = u_j$ for
all $j \in U(\mu^k)$.

(iii) If Case 2 holds, and if (\ref{caselow}) holds, then $x_j^* = u_j$ for
all $j \in U(\mu^k)$.

(iv) If Case 2 holds, and if (\ref{casehigh}) holds, then $x_j^* = l_j$ for
all $j \in L(\mu^k)$.

\end{proposition}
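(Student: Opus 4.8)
The plan is to reduce everything to two monotonicity facts about the map $\mu \mapsto x_j(\mu)$ defined in (\ref{xmu}). First, that the aggregate resource $G(\mu) := \sum_{j \in J^k} g_j(x_j(\mu))$ is \emph{monotone} in $\mu$, so that the sign of the residual $G(\mu^k) - b^k$ reveals on which side of $\mu^*$ the trial multiplier $\mu^k$ lies; and second, that each coordinate map $x_j(\cdot)$ is monotone, so that once $\mu^k$ is known to over- or under-estimate $\mu^*$, a variable sitting at a bound at $\mu^k$ must still sit at that bound at $\mu^*$. Throughout I would use that, by strict convexity of the $\phi_j$, the inner Lagrangian minimizer $x(\mu)$ is single-valued and the primal optimum satisfies $\mathbf{x}^* = x(\mu^*)$; and that, since $\mu^* > 0$, complementarity (\ref{eq:sep_conv_ineq_constr_kkt1}) forces the resource constraint of the reduced problem to be active, i.e. $G(\mu^*) = b^k$. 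Recall also, from the remark preceding the proposition, that $x_j(\cdot)$ is nondecreasing in Case~1 and nonincreasing in Case~2.

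First I would establish the monotonicity of $G$. In Case~1 each $g_j$ is decreasing while each $x_j(\cdot)$ is nondecreasing, so each composition $\mu \mapsto g_j(x_j(\mu))$ is nonincreasing; in Case~2 each $g_j$ is increasing while each $x_j(\cdot)$ is nonincreasing, so again each composition is nonincreasing. Hence in both cases $G$ is nonincreasing on $\R_+$ (this is just the concavity of $q$ re-expressed, since $G(\mu) = q'(\mu) + b^k$). Combining with $G(\mu^*) = b^k$ gives the crossing rule: if (\ref{caselow}) holds then $G(\mu^k) < b^k = G(\mu^*)$, and monotonicity forces $\mu^k > \mu^*$ (were $\mu^k \le \mu^*$ we would have $G(\mu^k) \ge G(\mu^*) = b^k$, a contradiction); symmetrically, if (\ref{casehigh}) holds then $G(\mu^k) > b^k$ forces $\mu^k < \mu^*$.

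It then remains to run the squeeze argument coordinatewise; I would do part~(i) and note that (ii)--(iv) are mirror images. Under Case~1 and (\ref{caselow}) we have $\mu^k > \mu^*$. Take $j \in L(\mu^k)$, so that $x_j^k = x_j(\mu^k) = l_j$. Since $x_j(\cdot)$ is nondecreasing and $\mu^* < \mu^k$, we get $l_j \le x_j(\mu^*) \le x_j(\mu^k) = l_j$, hence $x_j(\mu^*) = l_j$; as $x_j^* = x_j(\mu^*)$ this is exactly $x_j^* = l_j$. The other three parts are identical after swapping bounds and monotonicity direction: in (ii) one uses $\mu^k < \mu^*$ with $x_j(\cdot)$ nondecreasing and $j \in U(\mu^k)$ to squeeze $x_j(\mu^*) = u_j$; in (iii) one uses $\mu^k > \mu^*$ with $x_j(\cdot)$ nonincreasing and $j \in U(\mu^k)$; and in (iv) one uses $\mu^k < \mu^*$ with $x_j(\cdot)$ nonincreasing and $j \in L(\mu^k)$.

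The part requiring the most care is not any single estimate but the two structural inputs that make both monotonicities valid at once: that $G$ is globally monotone in $\mu$ (which fails for general convex $g_j$, as the discussion of nonmonotone $g_j$ right before the proposition shows, and is exactly why the Case~1/Case~2 hypotheses are imposed), and that $x_j(\cdot)$ is single-valued so the squeeze yields an \emph{equality} rather than mere set membership. I would therefore be careful to invoke strict convexity of $\phi_j$ for single-valuedness of $x(\cdot)$ and for $\mathbf{x}^* = x(\mu^*)$, and to note that the whole argument is applied to the current reduced problem (\ref{problem2}), which has the same form as (\ref{problem}) with $J^k, b^k$ in place of $J, b$ and with the same active multiplier $\mu^* > 0$.
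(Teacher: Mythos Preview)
Your proposal is correct and follows essentially the same route as the paper: both arguments first use the monotonicity of $\mu \mapsto g_j(x_j(\mu))$ (hence of $G$) to locate $\mu^k$ relative to $\mu^*$ via the sign of the residual, and then use the coordinatewise monotonicity of $x_j(\cdot)$ to squeeze the variable onto its bound. Your write-up is in fact a bit more careful than the paper's---you make explicit the role of $\mu^*>0$ in forcing $G(\mu^*)=b^k$ and you flag single-valuedness of $x(\cdot)$---whereas the paper's printed proof, although announced as a proof of (i), actually argues under Case~2 and concludes $j\in U(\mu^k)$, i.e.\ it effectively proves (iii).
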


\begin{proof}
A proof of (i) is given; the proofs for (ii), (iii), and (iv) are
analogous. From (\ref{caselow}) we have that
\[ \sum_{j\in J^k}g_j(x_j(\mu^k)) < b^k.\] 
In Case 2, for all $j$ $g_j$ is increasing and $x_j(\mu)$ is nonincreaing,
which implies that $g_j(x_j(\mu))$ is nonincreasing in $\mu$ for all
$j$. Hence, we have that $\mu^k = \overline{\mu} \ge \mu^*$ which implies that
$x^k_j \le x^*_j$ for all $j$ since $x_j(\mu)$ is nonincreasing in $\mu$ for
all $j$. Hence, for $j \in U(\mu^k)$ we know that $x^{k+1}_j = u_j = x_j^*$,
i.e., we can peg $j \in U(\mu^k)$.
\end{proof}

\subsubsection{3-sets pegging}\label{sec:3set}

As in the 2-sets pegging principle of Section \ref{sec:taop} we determine if a
variable takes the value of the lower or upper bound at the optimal
solution. Additionally for the 3-sets pegging we determine if a variable
belongs to the open interval between the lower and upper bound, i.e., if
$x_j^* \in (l_j,u_j)$.  Assume that we know that $\mu^* \in (\mu_j^{u},
\mu_j^l)$; then it follows from (\ref{xmu}) that $x_j^* \in (l_j,u_j)$ and
there is no need to check if $x_j^k$ equals the lower or upper bound which
will reduce future calculations. 3-sets pegging is used for the quadratic
knapsack problem (\ref{quadratic}) in \cite[Section 3]{Kiw08a}.  The method
described in \cite[Section 3]{Kiw08a} can be generalized according to the
following proposition:

\begin{proposition}[relax primal variables from lower and upper
    bounds]\label{minkiwiel1}
Assume that Case 1 or 2 in Section \ref{sec:taop} holds and that for some
values of $\underline{\mu}$ and $\overline{\mu}$, $\underline{\mu} < \mu^* <
\overline{\mu}$ holds. If $\underline{\mu}, \overline{\mu} \in
         [\mu_j^l,\mu_j^u]$ holds for some $j \in \mathbf{J}^k$ then $l_j <
         x_j^* < u_j$.
\end{proposition}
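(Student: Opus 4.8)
The plan is to reduce the claim to the closed-form description (\ref{xmu}) of the Lagrangian minimizer $x_j(\mu)$, together with the fact that at optimality $x_j^* = x_j(\mu^*)$. First I would record the structural content of (\ref{xmu}) under Case~1 or Case~2: since $g_j$ is monotone and $\mu(x_j) = -\phi_j'(x_j)/g_j'(x_j)$ is monotone, the map $\mu \mapsto x_j(\mu)$ is monotone, and the two breakpoints $\mu_j^l$ and $\mu_j^u$ split $\R$ into three consecutive regions. Writing $\alpha := \min\{\mu_j^l,\mu_j^u\}$ and $\beta := \max\{\mu_j^l,\mu_j^u\}$, the middle \emph{open} interval $(\alpha,\beta)$ is exactly the set of multipliers for which $x_j(\mu)$ solves $\phi_j'(x_j)+\mu g_j'(x_j)=0$ and hence satisfies $l_j < x_j(\mu) < u_j$, whereas on the two closed outer regions $x_j(\mu)$ is pinned to $l_j$ or to $u_j$.

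Next I would trap $\mu^*$ inside this middle interval by a short chain of inequalities. The hypothesis that both trial multipliers lie in the interval spanned by the two breakpoints reads $\alpha \le \underline{\mu}$ and $\overline{\mu} \le \beta$; combining this with the bracketing assumption $\underline{\mu} < \mu^* < \overline{\mu}$ gives $\alpha \le \underline{\mu} < \mu^* < \overline{\mu} \le \beta$, so that $\mu^* \in (\alpha,\beta)$. The strictness of the two bracketing inequalities is precisely what keeps $\mu^*$ off the endpoints $\alpha$ and $\beta$; at either endpoint (\ref{xmu}) would already force $x_j$ onto a bound, so strictness is essential rather than cosmetic.

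Finally I would apply (\ref{xmu}) at $\mu=\mu^*$. Because $\mu^*$ is the optimal multiplier, the optimal primal value satisfies $x_j^* = x_j(\mu^*)$, and since $\mu^* \in (\alpha,\beta)$ lands $x_j(\mu^*)$ on the interior branch of (\ref{xmu}), we conclude $l_j < x_j^* < u_j$, as required.

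I expect the only delicate point to be bookkeeping rather than genuine difficulty: one must track that Case~1 gives $\mu_j^l \le \mu_j^u$ while Case~2 gives $\mu_j^u \le \mu_j^l$, so that the interval written $[\mu_j^l,\mu_j^u]$ in the statement must be read as the interval $[\alpha,\beta]$ between the two breakpoints in whichever orientation the active case dictates. With that reading, the monotonicity supplied by Case~1 or~2 is exactly what guarantees that (\ref{xmu}) describes three contiguous regions, which is what the inequality chain exploits; apart from this orientation check, the argument is a one-line application of (\ref{xmu}) to a multiplier trapped strictly between the breakpoints.
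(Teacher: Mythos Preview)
Your argument is correct and follows essentially the same route as the paper: trap $\mu^*$ strictly between the two breakpoints using the bracketing hypothesis, then read off $l_j < x_j^* < u_j$ from (\ref{xmu}). The paper handles the orientation issue by simply treating Case~2 (writing the interval as $[\mu_j^u,\mu_j^l]$) and declaring Case~1 analogous, whereas your $\alpha,\beta$ device makes this uniform; otherwise the proofs are the same.
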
 

\begin{proof}
Assume that Case 2 holds, i.e., $\mu = -\phi'_j/g_j'$ is decreasing. (The
proof for Case 1 is analogous.) Assume that $\underline{\mu}, \overline{\mu}
\in [\mu_j^u,\mu_j^l]$ holds for some $j \in \mathbf{J}^k$. Since
$-\phi'_j/g_j'$ is decreasing we have that $\underline{\mu}, \overline{\mu}
\in [\mu_j^u,\mu_j^l]$ implies that $\mu^* \in (\mu_j^u,\mu_j^l)$, and from
(\ref{xmu}) it then follows that $l_j < x_j^* < u_j$.
\end{proof}

\subsubsection{5-sets pegging} \label{sec:5set}

As in the 3-sets pegging principle in Section \ref{sec:3set} we determine if a
variable takes the value of the lower or upper bound or if the variable
strictly belongs to the interval between the lower and upper bound. For 5-sets
pegging we also determine if a variable is larger than the lower bound or
smaller than the upper bound. 5-sets pegging for problem (\ref{problem}) is
used in \cite{WaW12}, generalizing a method from \cite{Kiw08a}. Assuming that
we know that $x_j^* < u_j$, there is no need to check if $x_j^k$ equals the
upper bound; this might reduce future calculations. The proof of the following
proposition follows from the monotonicity of $g_j$ and $x_j(\mu)$ (see
\cite{WaW12} for a proof).

\begin{proposition}[relax primal variables from lower or upper bound]
\label{wawa}
Assume that Case 1 or 2 in Section \ref{sec:taop} holds. Let $j \in
\mathbf{J}^k$. If $\mu^* < \overline{\mu} \leq \mu^l_j$, then $x_j^* >l_j$ and
if $\mu^*> \underline{\mu} \geq \mu^u_j$ then $x_j^* <u_j$.

\qed

\end{proposition}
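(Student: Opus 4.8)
The plan is to prove Proposition~\ref{wawa} by exploiting the monotonicity relationship between the breakpoint thresholds $\mu_j^l, \mu_j^u$ and the behavior of $x_j(\mu)$ as recorded in equation~(\ref{xmu}). Recall that $\mu_j^l := -\phi_j'(l_j)/g_j'(l_j)$ and $\mu_j^u := -\phi_j'(u_j)/g_j'(u_j)$ are precisely the threshold values at which the minimizer $x_j(\mu)$ of the Lagrangian touches its lower and upper bounds, respectively. The key observation from~(\ref{xmu}) is that $x_j(\mu) = l_j$ exactly when $\mu \ge \mu_j^l$, and $x_j(\mu) = u_j$ exactly when $\mu \le \mu_j^u$; for intermediate $\mu$ the variable lies strictly inside $(l_j, u_j)$. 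The whole argument reduces to tracking which of these regimes $\mu^*$ falls into.

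First I would fix attention on Case~2, where $g_j$ is increasing and $\mu(x_j) = -\phi_j'(x_j)/g_j'(x_j)$ is decreasing in $x_j$; the Case~1 argument is entirely symmetric, as the proposition's own statement signals. As noted in the remark just before Proposition~\ref{pegglow}, when $\mu(x_j)$ is decreasing the map $\mu \mapsto x_j(\mu)$ is nonincreasing. I would establish the first implication by assuming the hypothesis $\mu^* < \overline{\mu} \le \mu_j^l$. Since $x_j(\cdot)$ is nonincreasing and $\mu^* < \mu_j^l$, we have $x_j(\mu^*) > x_j(\mu_j^l) = l_j$; more directly, because $\mu^* < \mu_j^l$ strictly, condition~(\ref{xmu}) places us outside the lower-bound regime, so $x_j^* = x_j(\mu^*) > l_j$. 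The auxiliary bound $\overline{\mu}$ merely certifies, via $\mu^* < \overline{\mu} \le \mu_j^l$, that the strict inequality $\mu^* < \mu_j^l$ indeed holds, which is the only fact actually needed.

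For the second implication I would proceed symmetrically: from $\mu^* > \underline{\mu} \ge \mu_j^u$ we extract $\mu^* > \mu_j^u$, which by~(\ref{xmu}) means we are strictly above the upper-bound threshold, so $x_j^* = x_j(\mu^*) < u_j$. In Case~1, where $\mu(x_j)$ is increasing and hence $x_j(\mu)$ is nondecreasing, the roles of the thresholds relative to the direction of $x_j(\mu)$ flip, but the conclusions $x_j^* > l_j$ and $x_j^* < u_j$ follow by the same threshold bookkeeping; this is why the paper cites~\cite{WaW12} and defers the detailed symmetric verification there.

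I do not anticipate a genuine obstacle here, since the result is essentially a direct reading of the piecewise definition~(\ref{xmu}) once the monotonicity of $x_j(\mu)$ is in hand. The one point requiring minor care is the bookkeeping of strict versus nonstrict inequalities: the hypotheses bound $\mu^*$ strictly away from one of the auxiliary limits ($\underline{\mu}$ or $\overline{\mu}$) but only weakly against the breakpoint $\mu_j^l$ or $\mu_j^u$, and one must chain these to obtain the strict separation $\mu^* \ne \mu_j^l$ (respectively $\mu^* \ne \mu_j^u$) that forces $x_j^*$ off the corresponding bound. Verifying that this strictness propagates correctly in both cases is the only subtlety, and it is handled cleanly by the transitivity $\mu^* < \overline{\mu} \le \mu_j^l \implies \mu^* < \mu_j^l$ exhibited above.
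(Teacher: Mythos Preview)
Your proposal is correct and matches the paper's own approach: the paper does not actually write out a proof for Proposition~\ref{wawa} but simply remarks that it ``follows from the monotonicity of $g_j$ and $x_j(\mu)$'' and cites \cite{WaW12}, which is precisely the mechanism you invoke. Your handling of the strict-versus-nonstrict inequality chain ($\mu^* < \overline{\mu} \le \mu_j^l \Rightarrow \mu^* < \mu_j^l$) is the one genuine bookkeeping point, and you treat it correctly.
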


\subsection{Algorithm: Median search of Breakpoints with 2-sets pegging 
(MB2)}\label{sec:medians}

Consider Case 1 or 2 in Section \ref{sec:taop} for problem
(\ref{problem}). Let $\rm{median}(\cdot)$ denote a function which provides the
median of a finite vector; let $\mu_m$ be the median breakpoint, and define
the total use of the resource due to variables that equal the lower and upper
bounds as $\beta_l := \sum_{ j\in \{ N \mid \mu_j^l \le \mu_m \} } g_j(l_j)$,
and $\beta_u := \sum_{j\in \{ N \mid \mu_j^u \ge \mu_m \}} g_j(u_j)$,
respectively.  In the spirit of \cite[Section 3.1]{BrS02b} and \cite[Algorithm
  3.1]{Kiw07}, we present the following algorithm:

\begin{tabular}{l}
\\ \textbf{Initialization:} \\ $\qquad$ Set $N := J$, $k:=1$, and $b^k := b$.
\\ $\qquad$ Compute breakpoints $\mu^l:= ( \mu^l_j )_{j \in N }$, $\mu^u:= (
\mu^u_j )_{j \in N }$ as in (\ref{xmu}), and let $\mu^k := \left( -\infty,
\left( \mu^l \right)^\intercal, \left( \mu^u\right)^\intercal, \infty
\right)^\intercal$.
\\ 
\textbf{Step 0} (check if $\mu = 0$ is optimal):
\\ 
$\qquad$ If $\sum_{j \in N} g_j (x_j(0)) \le b$, for $x_j(\mu)$ is determined 
from (\ref{xmu}), then $\mu^*=0$,  $x_j^* = x_j(0)$ for $j \in N$. Stop.    
\\ 
\textbf{Iterative algorithm:} 
\\ 
\textbf{Step 1} (stopping test): 
\\ 
$\qquad$ If $\mu^k = \emptyset$ then find $\mathbf{x}^*$ and $\mu^*$ from 
problem (\ref{problem}) relaxed from lower and upper bounds.
\\ 
$\qquad$ Otherwise, let $\mu_m := \rm{median}(\mu^k)$. 
\\ 
\textbf{Step 2} (compute explicit reference): 
\\ 
$\qquad$ Determine $ \delta := \sum_{ j\in \{ N \mid \mu_j^u < \mu_m < \mu_j^l
  \} } 
g_j(x_j(\mu_m))
+ \beta_u + \beta_l$, where $x_j(\mu)$ is determined from (\ref{xmu}).
\\ 
$\qquad$ If $\delta >b^k$, then go to Step 3.1. \\ $\qquad$ If $\delta < b^k$, 
then go to Step 3.2. \\ 
$\qquad$ Otherwise $(\delta = b^k)$ let $\mu^* := \mu_m$, find 
$\mathbf{x}^*$ from (\ref{xmu}), and stop.
\\
\textbf{Step  3.1} (update and fix lower bounds):
\\ 
$\qquad$ For all $j\in N$:  If $\mu_m \ge \mu_j^l$ then let
$N := N \setminus \{j\}$ and $x_j^* := l_j$. 
\\ 
$\qquad$ Let $\mu^{k+1} := ( \mu^k_j )_{ j \in \{ N \mid \mu_m < \mu^k_j \}
}$, 
$b^{k+1} := b^{k}- \beta_l$, and $k := k+1$. 
  \\ 
  $\qquad$ Go to Step 1.
   \\ 
  \textbf{Step 3.2} (update and fix upper bounds): 
 \\ 
  $\qquad$ For all $j\in N$: If $\mu_m \le \mu_j^u$ then let $N := 
N \setminus \{j\}$ and $x_j^* := u_j$.
   \\
    $\qquad$ Set $  \mu^{k+1}  := ( \mu^k_j )_{ j \in \{ N \mid \mu_m >
     \mu^k_j \}}$, 
$b^{k+1} := b^{k}- \beta_u$, and $k := k +1$.
  \\ $\qquad$ Go to Step 1.
  \\ \\
\end{tabular}

\noindent For problem (\ref{problemeq}), the algorithm is similar except Step
0 vanishes.

\subsection{Algorithm: Median search of Breakpoints with 3-sets pegging (MB3)}
\label{sec:rmm}

If we can determine if the value of a variable $x_j$ is strictly within the
bounds for all $\mu$ such that $\underline{\mu} < \mu < \overline{\mu}$, then
we don't have to check if $x_j$ violates the bounds when we determine $x_j$
from (\ref{xmu}) in future iterations (see Proposition \ref{minkiwiel1}). This
might save us some operations.  Define a set of indices for the lower and
upper limit being within the interval of the lower and upper breakpoint for a
variable: $M := \{ \, j \in N \mid \underline{\mu},\overline{\mu} \in
[\mu_j^l,\mu_j^u] \, \}$. From Proposition \ref{minkiwiel1} we have that if
$j\in M$ then $l_j < x_j^* < u_j$.  Hence if $j\in M$ we do not have to check
if $x_j$ violates the bounds in future iterations. But we should have in mind
that to determine if $j\in M$ requires some extra operations.  If we let the
initial values of the limits be $\underline{\mu} = -\infty$ and
$\overline{\mu} = \infty$, then $\underline{\mu},\overline{\mu} \notin
[\mu_j^l,\mu_j^u]$ and we note that there is no need to check if $j\in M$, as
long as $\underline{\mu} = -\infty$ or $\overline{\mu} = \infty$. Hence, to
avoid unnecessary operations we start by checking if $j\in M$ when
$\underline{\mu} > -\infty$ or $\overline{\mu} < \infty$; this is in contrast
to the algorithms in \cite[Section 3]{Kiw08a} and \cite{WaW12}.

We finally define the contribution to the resource constraint from the
variables including $M$: $\gamma(\mu) := \sum_{j\in M } g_j(x_j(\mu))$. Note
that the value of $\gamma$ depends on the value of the parameters in $\phi_j$,
$g_j$ and $\mu$. If we can separate the parameters from the multiplier $\mu$,
i.e., if $\gamma$ is additively and/or multiplicatively separable [that is,
  $\gamma (\mu , \cdot) = \gamma_1(\mu) \gamma_2(\cdot) + \gamma_3 (\cdot)$]
then the values of $\gamma_2$ and $\gamma_3$ can be calculated successively so
that no calculations are done more than once. Consider, for example, the
negative entropy function, $\phi_j(x_j) := x_j\log(x_j/a_j-1)$ and the
resource constraint function $g_j(x_j):=x_j$. Then, $\gamma(\mu,\mathbf{a})
:=\sum_{j\in M} g_j(x_j^{k}(\mu)) = \sum_{j\in M} a_je^{-\mu} =
\gamma_2(\mathbf{a})\gamma_1(\mu)$, where $\gamma_1(\mu) = e^{-\mu}$ and
$\gamma_2(\mathbf{a}) = \sum_{j\in M} a_j$, i.e., we update $\gamma_2$ in
Steps 3.1 and 3.2 such that if $\underline{\mu},\overline{\mu} \in
[\mu_j^u,\mu_j^l]$ then $\gamma_2 := \gamma_2 + a_j$. For the quadratic
knapsack problem, this approach is applied in \cite[Section 3]{Kiw08a}.  We
next present an algorithm that applies the usage of $M$; Steps 0 and 1 are
similar to the algorithm MB2 in Section \ref{sec:medians}:

\begin{tabular}{l}
\\ \textbf{Initialization:} 
\\ 
$\qquad$ Set $N := J$, $k:=1$, $b^k := b$, $\gamma := 0$, $M := \emptyset$,  
$\underline{\mu} := -\infty$, and $\overline{\mu} := \infty$. 
\\
$\qquad$ Compute breakpoints $\mu^l:= ( \mu^l_j )_{j \in N }$, $\mu^u:= 
( \mu^u_j )_{j \in N }$ as in (\ref{xmu}), and let $\mu^k :=
\left( \begin{matrix} \mu^l \\ \mu^u \end{matrix} \right)$.
\\ 
\textbf{Iterative algorithm:} 
\\ 
\textbf{Step 2} (compute explicit reference):
\\ $\qquad$ Determine $ \delta := \sum_{j\in \{ N \mid \mu_j^u < \mu_m < \mu_j^l \}}
g_j(x_j(\mu_m)) + \gamma(\mu_m) + \beta_u + \beta_l$, 
\\
$\qquad$ where $x_j(\mu)$ is determined from (\ref{xmu}). 
\\
$\qquad$ If $\delta >b^k$, then go to Step 3.1. 
\\ 
$\qquad$ If $\delta < b^k$, then go to Step 3.2. 
\\ 
$\qquad$ Otherwise $(\delta = b^k)$ set $\mu^* := \mu_m$; find optimal 
$\mathbf{x}^*$ from (\ref{xmu}), and stop.
\\ 
\textbf{Step 3.1} (update and fix lower bounds): 
\\ 
$\qquad$ Let $\underline{\mu}=\mu_m$.
\\
$\qquad$ For $j\in N$: If $\underline{\mu} \ge \mu_j^l$ then let $N
:= N \setminus \{j\}$ and $x_j^* := l_j$. \\ $\qquad \qquad\quad\; $
$\quad$  If $\underline{\mu},\overline{\mu} \in [\mu_j^u,\mu_j^l]$
then let $N := N
\setminus \{j\}$ and $M := M \cup \{j\}$; update $\gamma_{1}$ and $\gamma_{2}$.
\\ 
$\qquad$ Let $\mu^{k+1} := ( \mu^k_j )_{ j \in \{ N \cup M \mid \mu_m < \mu^k_j \}
}$, $b^{k+1} := b^{k}- 
\beta_l$, and $k := k+1$. 
\\ $\qquad$ Update $\gamma$ and go to Step 1. 
\\ 
\textbf{Step 3.2} (update and fix upper bounds):
 \\ 
 $\qquad$ Let $\overline{\mu} := \mu_m$.
 \\
 $\qquad$ For
$j\in N$: If $\overline{\mu} \le \mu_j^u$ then let $N := N \setminus
\{j\}$ and $x_j^* := u_j$. 
\\ $\qquad \qquad\quad\; $ $\quad$ If  $\underline{\mu},\overline{\mu} \in
   [\mu_j^u,\mu_j^l]$ then let $N := N \setminus \{j\}$ and
$M := M \cup \{j\}$; update $\gamma_{1}$ and $\gamma_{2}$. 
\\ 
$\qquad$ Let $\mu^{k+1}  := ( \mu^k_j )_{ j \in \{ N \cup M \mid \mu_m > \mu^k_j
  \}}$, $b^{k+1} := b^{k} - \beta_u$, and $k := k +1$. 
\\ $\qquad$ Go to Step 1. 
\\
\end{tabular}

\begin{remark} If $\gamma$ is not separable then the updates of $\gamma_1$
  and/or $\gamma_2$ in Steps 3.1 and 3.2 vanish.
\end{remark}

\subsection{Algorithm: Median search of Breakpoints with 5-set pegging
  (MB5)}\label{sec:MB5}

As in the algorithm MB2 in Section \ref{sec:medians} we peg the variables
whose values equal the lower or upper bounds, and as in MB3 we determine if a
variable is strictly within the bounds. Further, we determine if a variable is
smaller than the upper bound or larger than the lower bound as in Section
\ref{sec:5set}. Define a set $L_-$ of indices where the optimal solution is
known to be strictly smaller than the upper bound in the optimal solution,
i.e., $L_- := \{j \in {N} \mid x_j^*<u_j \}$, and similarly define a set $U_+$
of indices where the variable is known to be larger than the lower bound in
the optimal solution, i.e., $U_+:=\{j \in {N} \mid x_j^*>l_j\}$. Define,
respectively, the total use of the resource due to variables whose values
equal the lower and upper bounds as $\beta_l := \sum_{j \in \{ N \cup L_- \mid
  \mu_j^l \le \mu_m\} } g_j(l_j)$ and $\beta_u := \sum_{j\in \{ N \cup U_+
  \mid \mu_j^u \ge \mu_m \} } g_j(u_j)$. We present an algorithm that applies
5-sets pegging where Steps 0 and 1 are similar to the algorithm MB2 in Section
\ref{sec:medians}:

\begin{tabular}{l}
\\ 
\textbf{Initialization:} 
\\ 
$\qquad$ Set $N := J$, $k:=1$, $b^k := b$, $\gamma :=0$, and 
$M = L_- = U_+  := \emptyset$,
$\underline{\mu} := -\infty$, and $\overline{\mu} := \infty$. 
\\
$\qquad$ Compute breakpoints $\mu^l:= ( \mu^l_j )_{j \in N }$, $\mu^u:= 
( \mu^u_j )_{j \in N }$ as in (\ref{xmu}), and let $\mu^k :=
\left( \begin{matrix} \mu^l \\ \mu^u \end{matrix} \right)$. 
\\ 
\textbf{Iterative algorithm:}
\\ 
\textbf{Step 2} (compute explicit reference):
\\ $\qquad$ Determine $ \delta := \sum_{j\in \{ N \cup U_+ \cup L_-  \mid
  \mu_j^u < \mu_m < \mu_j^l \} }
g_j(x_j(\mu_m)) + \gamma(\mu_m) + \beta_u + \beta_l$, 
\\
$\qquad$
where $x_j(\mu)$ is determined from (\ref{xmu}). 
\\
$\qquad$ If $\delta >b^k$, then go to Step 3.1. 
\\ 
$\qquad$ If $\delta < b^k$, then go to Step 3.2. \\ $\qquad$
Otherwise $(\delta = b^k)$ set $\mu^* := \mu_m$ and find optimal 
$\mathbf{x}^*$ from (\ref{xmu}), and stop.
\\ 
\textbf{Step 3.1} (update and fix lower bounds):
\\
$\qquad$ Let $\underline{\mu} := \mu_m$
\\ 
$\qquad$ For $j\in N$: If $ \underline{\mu} \ge \mu_j^l$ then let $N
:= N \setminus \{j\}$  and $x_j^* := l_j$. 
\\
$\qquad$ For $j \in L_-$: If $\underline{\mu} \ge \mu_j^l$ then let $L_- := 
L_- \setminus \{j\}$ and $x_j^* := l_j$. 
\\ 
$\qquad$ For $j\in N$: If $\underline{\mu} \ge \mu_j^u$ then let $N
:= N \setminus \{j\}$ and $L_- := L_-  \cup \{j\}$. 
\\
$\qquad$ For $j\in U_+$: If $\underline{\mu} \in [\mu_j^u,\mu_j^l]$ and 
$\overline{\mu}\ge\mu^u_j$
then  let $U_+ := U_+ \setminus \{j\}$ and $M := M \cup \{j\}$; update 
$\gamma_{1}$, $\gamma_{2}$. 
\\ 
$\qquad$ Let $\mu^{k+1} := ( \mu^k_j )_{ j \in \{ N \cup M \cup L_- \cup U_+ 
\mid \mu_m < \mu^k_j \}}$, 
$b^{k+1} := b^{k}-
\beta_l$, and $k := k+1$. 
\\ 
$\qquad$ Go to Step 1. 
\\
\textbf{Step 3.2} (update and fix upper bounds):
\\ 
$\qquad$ Let $\overline{\mu} := \mu_m$.
\\
$\qquad$ For $j\in N$: If $ \overline{\mu} \le \mu_j^u$ then let 
$N := N \setminus \{j\}$ and $x_j^* := u_j$. 
\\
$\qquad$ For $j \in U_+$: If $\overline{\mu} \le \mu_j^u$ then let 
$U_+ := U_+ \setminus \{j\}$ and $x_j^* := u_j$. 
\\ 
$\qquad$ For $j\in N$: If $\overline{\mu} \le \mu_j^l$ then let 
$N := N \setminus \{j\}$ and $U_+ := U_+  \cup \{j\}$. 
\\
$\qquad$ For $j\in L_-$: If $\overline{\mu} \in [\mu_j^u,\mu_j^l]$ and 
$\underline{\mu}\le\mu^l_j$
then let $L_- := L_- \setminus \{j\}$ and $M := M \cup \{j\}$; 
update $\gamma_{1}$, $\gamma_{2}$.
\\ 
$\qquad$ Let $\mu^{k+1}  := ( \mu^k_j )_{ j \in \{ N \cup M \cup L_- 
\cup U_+ \mid \mu_m > \mu^k_j \}}$, 
$b^{k+1} := b^{k}-
\beta_u$, and $k := k+1$. 
\\ 
$\qquad$ Go to Step 1. 
\\ 
\\
\end{tabular}

\noindent Note that when we determine $x_j(\mu_m)$ for $j \in U_+$ we do not
need to check if $x_j=l_j$, and for $j\in L_-$ we do need to check if
$x_j=u_j$. This will in some cases save us some operations. Further when we
determine if $x_j \in M$ we only need to check if this holds for $j \in U_+$
or $j \in L_-$ depending on if we update the lower or upper bound of the dual
variable. This differs from the algorithm in \cite{WaW12} since that algorithm
finds $M$ from $U_+ \cup L_-$. Note that we make use of information from
earlier iterations when updating $\delta$. When updating $\delta$ in
\cite{WaW12} information from earlier iteration is negligible hence some
operations are repeated.

\begin{remark}
Consider iteration $k$. In Step 3.1 we only need to check if $\mu_m \in
[\mu_j^u,\mu_j^l]$ and $\overline{\mu}\ge\mu^u_j$ for $j\in U_+$ since we know
from earlier iterations that if $j \in N$ then $\overline{\mu} \nleq
\mu^l_j$. Similar holds for Step 3.2.
\end{remark}

\subsection{Convergence and time complexity of breakpoint 
algorithms}\label{sec:conbreak}

Similar to \cite{Kiw08a} and \cite{WaW12} it is possible to show that the
breakpoint algorithms converge to the optimal solution.  Consider the time
complexity for algorithm MB2, MB3 and MB5. Assuming that the median function
$\rm{median(\cdot)}$ is linear, we have $C_i n$ operations in each of the
Steps 0 through 3.2 for some constants $C_i$ for $i \in \{ 0; 1; 2; 3.1;
3.2\}$ corresponding to Step 0 through 3.2 respectively.  Since we use a
median search function the number of iterations will terminate in $\log_2 (n)$
iterations (in the worst case). Hence the time complexity of the algorithm is
$O (n \log_2 (n))$. However in \cite{Bru84} a proof for a time complexity of
$O (n)$ is given for the breakpoint algorithm solving the quadratic knapsack
problem (\ref{quadratic}).

\section{Relaxation algorithms}\label{sec:relaxation}

In a relaxation algorithm for the problem (\ref{problem}) we iteratively solve
the problem relaxed from constraints (\ref{problemc}), i.e., we solve the
following problem:
\begin{subequations}
\begin{align}
        \minimize_x &\quad \phi(\mathbf{x}) := \sum_{j \in J^k} \phi_j(x_j),
        \\ \subto & \quad g(\mathbf{x}):= \sum_{j \in J^k} g_j(x_j) \le b.
\end{align}
\label{problemrelax}
\end{subequations}

\noindent From problem (\ref{problemrelax}) we obtain a solution
$\hat{\mathbf{x}}$. Together with $\hat{\mathbf{x}}$ we also obtain an
estimate $\hat{\mu}$ of the multiplier value $\mu^*$ from the optimality
condition. Then we adjust the solution $\hat{\mathbf{x}}$ for constraints
(\ref{problemc}) by determining $x_j$ from
\begin{equation}\label{xprim}
x_j := \begin{cases} l_j, & \text{if } \hat{x}_j \leq l_j, 
\cr u_j, & \text{if } \hat{x}_j \geq u_j, 
\cr \hat{x}_j, & \text{if } l_j<\hat{x}_j<u_j.
\end{cases}
\end{equation}

At the beginning of the algorithm we must determine whether constraint
(\ref{problemb}) is satisfied with equality at an optimal solution. From the
optimality condition (\ref{eq:sep_conv_ineq_constr_kkt1}) we have that if the
inequality constraint ({\ref{problemb}}) is satisfied then $\mu^*=0$. Hence,
for $\hat{\mu} = 0$ we find $\hat{x}_j$ by solving $\phi_j(\hat{x}_j) +
\hat{\mu} g_j(\hat{x}_j)= 0 $; if $\sum_{j \in J} g_j(x_j) \le b$, where the
value of $x_j$ is determined from ({\ref{xprim}}), then we have found the
optimal solution to problem (\ref{problem}).  Otherwise, we know that $\mu^* >
0$ and that the inequality constraint (\ref{problemb}) can be regarded as an
equality.  Hence, we solve the problem (\ref{problemrelax}), obtaining a
solution $\hat{\mathbf{x}}$. Let
\[
L(\hat {\mathbf{x}}) := \{ \, j \in J^k \mid \hat{x}_j \le l_j \, \},
\qquad U(\hat {\mathbf{x}}) := \{ \, j \in J^k \mid \hat{x}_j \ge u_j
\, \}
\]
denote the sets of variables that are either out of bounds at $\hat{\mathbf{
    x}}$ or equal a lower, respectively an upper, bound. 

In order to simplify the remaining discussion, we consider Case 2 in Section
\ref{sec:taop}, i.e., $\mu(x_j)$ to be monotonically decreasing and $g_j$ is
increasing; Case 1 is treated analogously.  Calculate the total
deficit and excess at $\hat{\mathbf{ x}}$, respectively, as
\begin{equation}\label{nabladelta}
\nabla := \sum_{j \in L} (g_j(l_j) - g_j(\hat{x}_j)), \qquad
\Delta := \sum_{j \in U} (g_j(\hat{x}_j) - g_j(u_j)).
\end{equation}
Now, if $\Delta > \nabla$ then we set $x_j^* = u_j$ for $j \in U(\hat{\mathbf
  {x}})$; if $\Delta < \nabla$ we set $x_j^* = l_j$ for $j \in L(\hat{\mathbf{
    x}})$; otherwise $\Delta = \nabla$ and we have found the optimal
solution. If $\Delta \neq \nabla$, then we reduce the problem by removing the
fixed variables, and adjusting the right-hand side of the constraint
(\ref{problemb}) to reflect the variables fixed. If any free variables are
left, we re-solve problem (\ref{problemrelax}) and repeat the procedure,
otherwise we have obtained an optimal solution.

The rationale behind this procedure is quite simple and natural: Suppose that
$\Delta > \nabla$ holds. We have that $\hat{\mu} = -
\phi_j'(\hat{x}_j)/g_j'(\hat{x})$ for $j \in J^k \setminus \{ L \cup U
\}$. Let $s \in U(\hat{\mathbf{ x}})$ and $i \in J^k \setminus U(\hat{\mathbf{
    x}})$. Since the functions $-\phi_j' / g_j'$ are decreasing, it follows
that
\[
- \frac{\phi_s'(u_s)}{g'_s(u_s)} \geq
- \frac{\phi_s'(\hat{x}_s)}{g'_s(\hat{x}_s)} = \hat{\mu} =
- \frac{\phi_i'(\hat{x}_i)}{g_i'(\hat{x}_i)} \geq
- \frac{\phi_i'(u_i)}{g'_i(u_i)}.
\]
Denote by $b_+$ the right-hand side in the following iteration given that
$\Delta > \nabla$ holds: $b_+ := b - \sum_{j \in U(\hat{ x})} g_j(u_j)$. Also
let $(\hat{\mathbf{x}}', \hat{\mu}')$ denote a pair of relaxed optimal
primal--dual solutions in the following iteration. We must have that $\hat\mu'
\leq \hat\mu$, since
\[
\sum_{j \in J^k \setminus U(\hat{x}) } g_j(\hat{x}_j) = b - \sum_{j \in U
  (\hat{x})} g_j(\hat{x}_j) \leq b - \sum_{j \in U (\hat{ x})} g_j(u_j) = b_+
= \sum_{j \in J^k \setminus U(\hat{ x}) } g_j(\hat{x}_j');
\]
hence, for at least one $j \in J \setminus U(\hat{\mathbf{x}})$ we have that
$\hat{x}_j' \geq \hat{x}_j$, and therefore,
\[
\hat{\mu}' = - \frac{\phi_j'(\hat{x}_j')}{g_j'(\hat{x}_j')} \leq -
\frac{\phi_j'(\hat{x}_j)}{g_j'(\hat{x})} = \hat{\mu}
\]
follows. This derivation was first described by Bitran and Hax \cite{BiH81}.

Since in each iteration at least one variable is pegged to an optimal value,
the algorithm is clearly finite.  The most serious disadvantage of the
algorithm may be the requirement that the problem without the variable bounds
present must have an optimal solution. The computational efficiency of this
method is also determined by whether or not it is possible to provide an
explicit formula for each $\hat{x}_j$ in terms of the multiplier.


\subsection{Explicit/Implicit evaluation}\label{sec:peme}

For the breakpoint algorithm we determine $\mathbf{x}$ from (\ref{xmu}) for an
arbitrary multiplier $\mu$; then we explicitly evaluate the optimality of
$\mathbf{x}$ by substituting it into the resource constraint
({\ref{problemb}}). The explicit evaluation leaves us with one out of three
possible scenarios: (\ref{caseopt})--(\ref{casehigh}). For the relaxation
algorithm the traditional method to evaluate a solution to the problem
(\ref{problemrelax}) is to calculate the total deficit and excess at
$\mathbf{\hat{x}}$ (see Section \ref{sec:relaxation}). Also this evaluation
leaves us with 3 possible scenarios, namely
\begin{subequations}\label{relaxbit2}
\begin{align}
       \Delta & = \nabla \label{caseopt3}, \\
       \Delta & > \nabla \label{caselow3}, \\
       \Delta & < \nabla \label{casehigh3}. 
\end{align}
\end{subequations}
 
Evaluating the optimality from (\ref{caseopt3})--(\ref{casehigh3}) will be
referred to as an {\em implicit evaluation}. For the relaxation algorithm we
next show that the implicit and explicit evaluations are equivalent.
Propositions \ref{prop1brs} and \ref{prop2brs} below state the relations
between explicit and implicit evaluation. The proof of Proposition
\ref{prop2brs} is similar to the proof of Proposition \ref{prop1brs}.

\begin{proposition}[relation between explicit and implicit evaluation for 
$g_j$ monotonically increasing]\label{prop1brs} If for all $j\in J$ $g_j$ is
  monotonically increasing, then the explicit evaluation
  (\ref{caseopt})--(\ref{casehigh}) is equivalent to the implicit evaluation
  (\ref{caseopt3})--(\ref{casehigh3}), i.e., (\ref{caseopt3})
  $\Longleftrightarrow$ (\ref{caseopt}), (\ref{caselow3})
  $\Longleftrightarrow$ (\ref{caselow}), and (\ref{casehigh3})
  $\Longleftrightarrow$ (\ref{casehigh}).
\end{proposition}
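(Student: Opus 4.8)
The plan is to reduce the claimed two-way equivalence to a single signed algebraic identity relating the explicit residual $\sum_{j\in J^k} g_j(x_j^k) - b^k$ to the quantity $\nabla - \Delta$, and then to read off the three cases directly from the sign of that residual. So the whole statement collapses to establishing one identity plus a trivial case split.

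First I would pin down how the two points being compared are related. The relaxed problem (\ref{problemrelax}) returns the bound-free minimiser $\hat{\mathbf{x}}$ of the Lagrangian at its multiplier $\hat\mu$, whereas the explicitly evaluated point $\mathbf{x}^k$ is the one produced by (\ref{xmu}). Since $\phi_j + \hat\mu g_j$ is convex (a sum of convex functions with $\hat\mu \ge 0$), its minimiser over the box $[l_j,u_j]$ is precisely the projection (\ref{xprim}) of $\hat{x}_j$ onto $[l_j,u_j]$; hence $x_j^k = l_j$ for $j\in L(\hat{\mathbf{x}})$, $x_j^k = u_j$ for $j\in U(\hat{\mathbf{x}})$, and $x_j^k = \hat{x}_j$ otherwise. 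Here the hypothesis that each $g_j$ is increasing does two things: it makes the threshold description (\ref{xmu}) agree with this projection, and it guarantees that the summands in $\nabla$ and $\Delta$ in (\ref{nabladelta}) are genuinely nonnegative deficits and excesses.

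Second, I would invoke that in the relevant branch of the algorithm ($\hat\mu>0$, the $\hat\mu=0$ case having been disposed of separately) the relaxed resource constraint is active, so $\sum_{j\in J^k} g_j(\hat{x}_j) = b^k$. Substituting the description of $x_j^k$ from the first step and splitting the sum over $L(\hat{\mathbf{x}})$, $U(\hat{\mathbf{x}})$ and the remaining free indices, the free indices cancel against $b^k$ and one is left, straight from the definitions in (\ref{nabladelta}), with
\[
\sum_{j\in J^k} g_j(x_j^k) - b^k = \sum_{j\in L}\bigl(g_j(l_j) - g_j(\hat{x}_j)\bigr) + \sum_{j\in U}\bigl(g_j(u_j) - g_j(\hat{x}_j)\bigr) = \nabla - \Delta .
\]
The three equivalences are then immediate: the residual vanishes iff $\nabla=\Delta$, giving (\ref{caseopt}) $\Leftrightarrow$ (\ref{caseopt3}); it is negative iff $\Delta>\nabla$, giving (\ref{caselow}) $\Leftrightarrow$ (\ref{caselow3}); and it is positive iff $\Delta<\nabla$, giving (\ref{casehigh}) $\Leftrightarrow$ (\ref{casehigh3}).

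I expect the only real obstacle to lie in the first step, namely verifying cleanly that the explicit point $\mathbf{x}^k$ of (\ref{xmu}) and the projection (\ref{xprim}) of the relaxed solution coincide, and that the index sets $L(\hat{\mathbf{x}})$, $U(\hat{\mathbf{x}})$ used to define $\nabla$ and $\Delta$ are exactly the indices projected to the lower and upper bounds. Once that identification and the active-constraint equality $\sum_{j\in J^k} g_j(\hat{x}_j)=b^k$ are secured, the rest is the routine cancellation displayed above, using neither convexity nor monotonicity again; this is also why the decreasing case (Proposition \ref{prop2brs}) should follow from the same computation with the signs interchanged.
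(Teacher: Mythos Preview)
Your proof is correct and follows the same route as the paper: derive the identity $\sum_{j\in J^k} g_j(x_j) - b^k = \nabla - \Delta$ from the active relaxed constraint $\sum_{j\in J^k} g_j(\hat x_j)=b^k$ and then read off the three sign cases. The detour through (\ref{xmu}) is unnecessary, however---in the relaxation-algorithm setting the evaluated point $x_j$ is \emph{defined} by the projection (\ref{xprim}), so the ``obstacle'' you flag simply does not arise in the paper's argument.
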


\begin{proof}
For all $j \in J$, let $\hat{x}_j$ be the solution to ({\ref{problemrelax}}).
Let $ \nabla$ and $\Delta$ be defined as in (\ref{nabladelta}). We
have from (\ref{xprim}) that
\begin{align*}
\sum_{j\in J}g_j(x_j) = & \sum_{j\in J \setminus \{
  U \cup L \} } g_j(\hat{x}_j) + \sum_{j\in
  L} \left \{ g_j(l_j)-g_j(\hat{x}_j) + g_j(\hat{x}_j)\right
\} \nonumber 
\\ 
& + \sum_{j\in U}\left \{ g_j(u_j) -
g_j(\hat{x}_j) + g_j(\hat{x}_j) \right \} \nonumber \\ = &
\sum_{j\in J \setminus \{ U \cup L \} }
g_j(\hat{x}_j) + \sum_{j\in L} \left \{
g_j(l_j)-g_j(\hat{x}_j)\right \} + \sum_{j\in L}
g_j(\hat{x}_j) \nonumber \\ & - \sum_{j\in U} \left \{ g_j(u_j)
- g_j(\hat{x}_j) \right \} + \sum_{j\in U} g_j(\hat{x}_j)
\nonumber 
\\ 
= & \sum_{j\in J} g_j(\hat{x}_j) + \nabla - \Delta \nonumber 
\\ 
= & \; b + \nabla - \Delta, \nonumber
\end{align*}
where the last equality follows from the fact that $\hat{\mathbf{x}}$ is the
solution to the relaxed problem. Hence, $\hat{\mathbf{x}}$ must satisfy the
resource constraint. We know that $\Delta,\nabla \ge 0$ since $g_j$ is
increasing. Hence, if $\Delta = \nabla$ then (\ref{caseopt}) holds, if $\Delta
>\nabla$ then (\ref{caselow}) holds, and if $\Delta < \nabla$ then
(\ref{casehigh}) holds.
\end{proof}

\begin{proposition}[relation between explicit and implicit evaluation for 
$g_j$ monotonically decreasing]\label{prop2brs} If $g_j$ is monotonically
  decreasing for all $j\in J$, then the explicit evaluation
  (\ref{caseopt})--(\ref{casehigh}) is equivalent to the implicit evaluation
  (\ref{caseopt})--(\ref{casehigh}), i.e., (\ref{caseopt3})
  $\Longleftrightarrow$ (\ref{caseopt}), (\ref{casehigh3})
  $\Longleftrightarrow$ (\ref{caselow}), and (\ref{caselow3})
  $\Longleftrightarrow$ (\ref{casehigh}). 
\end{proposition}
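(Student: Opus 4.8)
The plan is to mirror the telescoping computation used for Proposition~\ref{prop1brs}, the only genuinely new ingredient being the sign bookkeeping forced by the decreasing monotonicity of the $g_j$. First I would fix the relaxed solution $\hat{\mathbf{x}}$ of (\ref{problemrelax}) and the projected point $\mathbf{x}$ obtained from it through (\ref{xprim}), together with the index sets $L(\hat{\mathbf{x}})$ and $U(\hat{\mathbf{x}})$. Splitting $J$ into $J\setminus\{L\cup U\}$, $L$, and $U$, and using that $x_j=\hat{x}_j$ on the first block, $x_j=l_j$ on $L$, and $x_j=u_j$ on $U$, I would substitute the relaxed feasibility relation $\sum_{j\in J} g_j(\hat{x}_j)=b$ (valid because, under $\mu^*>0$, the relaxed constraint is active) to cancel the interior terms. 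Since neither (\ref{xprim}) nor this feasibility relation refers to the monotonicity of $g_j$, this step is \emph{identical} to the corresponding one in the proof of Proposition~\ref{prop1brs} and produces the same closed-form expression for $\sum_{j\in J} g_j(x_j)$ in terms of $b$ and the $L$- and $U$-contributions.

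The decreasing monotonicity then enters only through the signs of those contributions. For $j\in L$ we have $\hat{x}_j\le l_j$, hence $g_j(\hat{x}_j)\ge g_j(l_j)$, so clamping an $L$-variable up to its lower bound now \emph{lowers} the resource usage; symmetrically, for $j\in U$ we have $\hat{x}_j\ge u_j$ and $g_j(\hat{x}_j)\le g_j(u_j)$, so clamping a $U$-variable down to its upper bound now \emph{raises} it. Consequently the roles of deficit and excess are interchanged relative to Proposition~\ref{prop1brs}: the non-negative deficit and excess appropriate for decreasing $g_j$ are the sign-reversed analogues of (\ref{nabladelta}), namely $\nabla:=\sum_{j\in L}(g_j(\hat{x}_j)-g_j(l_j))\ge 0$ and $\Delta:=\sum_{j\in U}(g_j(u_j)-g_j(\hat{x}_j))\ge 0$, and the computation of the previous paragraph reads
\[
\sum_{j\in J} g_j(x_j)=b-\nabla+\Delta .
\]

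From this single identity the equivalences drop out by inspecting the sign of $\Delta-\nabla$: one has $\sum_{j\in J} g_j(x_j)=b$ exactly when $\Delta=\nabla$, giving (\ref{caseopt3})$\Leftrightarrow$(\ref{caseopt}); $\sum_{j\in J}g_j(x_j)<b$ exactly when $\Delta<\nabla$, giving (\ref{casehigh3})$\Leftrightarrow$(\ref{caselow}); and $\sum_{j\in J}g_j(x_j)>b$ exactly when $\Delta>\nabla$, giving (\ref{caselow3})$\Leftrightarrow$(\ref{casehigh}), as asserted. I expect the only real obstacle to be precisely this sign bookkeeping: the algebra is shared with Proposition~\ref{prop1brs}, but the decreasing direction of $g_j$ reverses the sign of the deficit/excess terms, and it is exactly this reversal that crosses the pairing of (\ref{caselow}) and (\ref{casehigh}) with the implicit cases. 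A check that $\nabla,\Delta\ge 0$ under $g_j$ decreasing---paralleling the non-negativity remark in the proof of Proposition~\ref{prop1brs}---rounds out the argument.
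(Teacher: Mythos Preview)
Your proposal is correct and takes essentially the same approach as the paper, which omits the proof and simply states that it ``is similar to the proof of Proposition~\ref{prop1brs}.'' You have spelled out exactly that similar argument: the telescoping computation is identical, and you correctly isolate the sign reversal of the deficit/excess terms under decreasing $g_j$ as the only new ingredient, which is what produces the crossed pairing of (\ref{caselow})/(\ref{casehigh}) with (\ref{casehigh3})/(\ref{caselow3}).
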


\subsection{Primal/Dual evaluation of boundaries}\label{sec:primDualEval}
In the breakpoint algorithms in Section \ref{sec:rank}, we use the dual
variable to determine if $x_j$ equals a bound or if it lies in between the
bounds; see (\ref{xmu}). In relaxation algorithms the traditional way to solve
the problem (\ref{problem}) is to determine the primal optimal solution
$\hat{x}_j$ of the relaxed problem (\ref{problemrelax}) and then simply check
if $\hat{x}_j$ is within the lower and upper bounds; see (\ref{xprim}).  An
alternative is to find the optimal dual variable $\hat{\mu}$ of the relaxed
problem (\ref{problemrelax}) and then (similar to the breakpoint algorithm in
Section \ref{sec:rank}) determine the primal variables from (\ref{xmu}). Of
course this requires us to determine the breakpoints. However, for the
variables that violate the bounds (\ref{problemc}) we don't have to determine
$x_j$ from the relation $\phi_j(x_j) + \mu g_j(x_j)= 0$.  Hence, instead of
evaluating $L(\hat{\mathbf{x}})$ and $U(\hat{\mathbf{x}})$ from the primal
variable as in Section \ref{sec:relaxation}, we can evaluate $L(\hat{\mu})$
and $U(\hat{\mu})$ from the dual variable as in Section \ref{sec:taop}.

Define $\Phi_j(x_j) := \phi_j^\prime(x_j) / g_j^\prime(x_j)$ and assume that
there exists an inverse to $\Phi_j$.  It follows from the optimality
conditions (\ref{xmu}) that $\Phi_j(\hat{x}_j(\hat\mu)) = -\hat\mu$,
$\Phi_j(l_j) = \phi_j^\prime(l_j) / g_j^\prime(l_j)$ and $\Phi_j(u_j) =
\phi_j^\prime(u_j) / g_j^\prime(u_j)$. Hence we have that:
\[
L(\hat {\mu}) := \{ \, j \in J \mid \hat{\mu} \ge -\phi_j^\prime(l_j) /
g_j^\prime(l_j) \, \} \Longleftrightarrow L(\hat {\mathbf{x}}) := \{ \, j \in
J \mid \hat{x}_j \le l_j \, \},
\]
and
\[
U(\hat {\mu}) := \{ \, j \in J \mid \hat{\mu} \le -\phi_j^\prime(u_j) /
g_j^\prime(u_j) \, \} \Longleftrightarrow U(\hat {\mathbf{x}}) := \{ \, j \in
J \mid \hat{x}_j \ge u_j \, \}.
\]

\subsection{Implementation choices for the relaxation algorithm}
\label{sec:pdei}

Considering the performance of a relaxation algorithm two decisions need to be
made: should the relaxed solution of the problem be evaluated implicitly from
$\nabla$ and $\Delta$ or explicitly from $\sum g_j$ (see Section
\ref{sec:peme})? Should the algorithm solve the primal or dual relaxed problem
(see Section \ref{sec:primDualEval})? An overview of the relaxation algorithm
and its possible realizations is shown in Figure \ref{relaxationOverview}. The
leftmost path in the figure is the classic primal relaxation algorithm of
Bitran and Hax \cite{BiH81}, the rightmost path in the figure is implemented
in \cite{Ste01}, and the algorithm that applies the right path in Step 1 and
the left path in Step 2 is implemented in \cite{KiW12}. Beside these two paths
no other paths have been explored. Our intention is to evaluate the
theoretically and practically best performing paths. Since no earlier studies
have applied 3- or 5-sets pegging for the relaxation algorithm, our intention
is to apply these two more sophisticated pegging methods.

\begin{figure}[h!]
  \centering
\psfrag{C}{\small \bf ~~Check if inequality feasible}
\psfrag{F}{\small \bf ~~~~~~~~~~Find breakpoints}
\psfrag{P}{\small \bf ~~~~Solve relaxed primal problem}
\psfrag{D}{\small \bf ~~~~Solve relaxed dual problem}
\psfrag{I}{\small \bf ~~~~~~~~~~~Implicit evaluation}
\psfrag{E}{\small \bf ~~~~~~~~~~~Explicit evaluation}
\psfrag{A}{\small \bf \!\!\!\!\!\! Apply 2-, 3-, or 5-sets pegging. 
                  If optimal stop, else go to Step 1}
\psfrag{a}{\small \bf Step 1}
\psfrag{b}{\small \bf Step 2}
\psfrag{c}{\small \bf Step 3}
  \includegraphics[width=12cm]{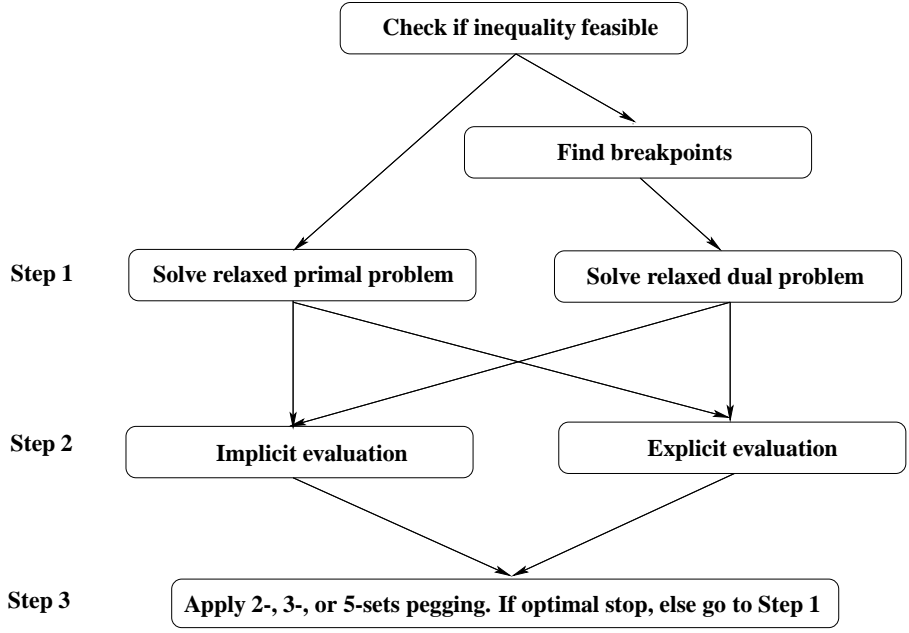}
  \caption{The diagram shows different possibilities to consider when
    constructing a relaxation algorithm.} \label{relaxationOverview}
\end{figure}


In Section \ref{sec:bih} we present an algorithm corresponding to the leftmost
path in Figure \ref{relaxationOverview}; in Section \ref{sec:mod2} we present
an algorithm which utilizes the rightmost path in Step 1 of Figure
\ref{relaxationOverview} and then changes to the left path in Step 2 of the
figure; in Section \ref{sec:ste} an algorithm corresponding to the rightmost
path of Figure \ref{relaxationOverview} is presented; and in Section
\ref{sec:bh4} an algorithm that utilizes the rightmost path in Step 1 of
Figure \ref{relaxationOverview} and then utilizes the theoretically best path
in Step 2 is presented. All algorithms in Sections
\ref{sec:bih}--\ref{sec:bh4} utilize 2-sets pegging. In Section
\ref{sec:tandf} we describe how 3- and 5-sets pegging can be utilized in
these.

\subsubsection{Algorithm: Primal determination with Implicit evaluation of 
the Relaxed problem (PIR2)}\label{sec:bih}

We first assume that Case 1 in Section \ref{sec:taop} holds for problem
(\ref{problem}). Define parameters to calculate the total deficit and excess,
as $\alpha^k_+ := \sum_{j \in U^k} g_j(\hat{x}_j)$, $\alpha^k_- := \sum_{j \in
  L^k} g_j(\hat{x}_j)$, $\beta^k_- := \sum_{j \in L^k} g_j(l_j)$ and
$\beta^k_+ := \sum_{j \in U^k} g_j(u_j)$. Hence we have that $\Delta^k =
\alpha^k_+ -\beta^k_+ $ and $\nabla^k = \beta^k_-- \alpha^k_-$. In the next
iteration when we reduce the resource $b^k$ we hence don't have to
re-calculate the part of the pegged variables which define $\beta^k_+$ or
$\beta^k_-$ (see Steps 3.1 and 3.2).  We present an algorithm for problem
(\ref{problem}) which is similar to the algorithm in Section 2 of
\cite{BrS02}:

\begin{tabular}[h]{l}
\\ 
\textbf{Initialization: } Set $k:=1, \; J^k := J$,
and $b^k := b$. 
\\ 
\textbf{Step 0} (check if $\mu = 0$ is optimal):
\\ 
$\qquad$ Let $\mu=0$ and find the solution $\mathbf{\hat{x}}$ to the relaxed
problem (\ref{problemrelax}), i.e., solve $\phi_j'(\hat{x}_j) + \mu
g_j'(\hat{x}_j)$, $j \in J^k$. 
\\
$\qquad$ If $\sum_{j \in J^k} g_j (x_j) \le b$, for $x_j$ determined from
(\ref{xprim}), then $\mu^*=0$,  $x_j^* = x_j$ for $j \in J^k$, and stop.  
\\
\textbf{Iterative algorithm: } 
\\ 
\textbf{Step 1} (solve relaxed primal problem):
\\ $\qquad$ For $j\in J^k$, find $\hat{x}_j^k$ by solving the relaxed problem
(\ref{problemrelax}).  
\\
$\qquad$ Set $L:=\emptyset$ and $U:=\emptyset$.
\\ 
\textbf{Step 2} (implicit evaluation): \\ $\qquad$
Determine $U(\hat{\mathbf{x}}^k)$ and $L(\hat{\mathbf{x}}^k)$ while computing
$\Delta^k := \alpha^k_+ -\beta^k_+ $ and $\nabla^k := \beta^k_-- \alpha^k_-$.  
\\ 
$\qquad$ If $\Delta^k > \nabla^k$, then go to Step 3.1.  \\ $\qquad$ If
$\Delta^k < \nabla^k$, then go to Step 3.2. 
\\ 
$\qquad$ If $\Delta^k = \nabla^k$, then set $x_j^* := l_j$ for $j \in
L(\hat{\mathbf{x}}^k)$, $x_j^* := u_j$ for $j \in U(\hat{\mathbf{x}}^k)$,
\\ $\qquad$ $\quad$ 
$x_j^* := x_j^k$ for $j \in J^k \setminus \{ L(\hat{\mathbf{x}}^k) \cup
U(\hat{\mathbf{x}}^k) \}$, and stop.  
\\ 
\textbf{Step 3.1} (peg lower bounds): \\ 
$\qquad$ Set $x_j^* := l_j$ for $j \in L$, $b^{k+1} := b^k - \beta^k_-$ and
$J^{k+1} := J^k \setminus L$.
 \\ $\qquad$ If
$J^{k+1} := \emptyset$ then stop, else set $k := k+1$ and go to Step 1. 
\\ 
\textbf{Step 3.2} (peg upper bounds): 
\\ $\qquad$ Set $x_j^* := u_j$ for
$j \in U$, $b^{k+1} := b^k - \beta^k_+$ and $J^{k+1} :=
J^k \setminus U$.  \\ $\qquad$ If $J^{k+1} :=
\emptyset$ then stop else set $k:=k+1$ and go to Step 1. \\

\\
\end{tabular}

\noindent We need to clarify some of the steps of the algorithm. In Step 1, we
find $\hat{\mathbf{x}}^{k+1}$ from, or partly from, $\hat{\mathbf{x}}^{k}$.
Assume, for example, that $\phi_j(x_j) = x_j\log(x_j/a_j-1)$ and
$g_j(x_j)=x_j$; then, $x_j^{k+1} = a_j b^{k+1} / \sum_{j\in J^{k+1}} a_j =
b^{k+1} / (\omega -\sum_{j\in K(\hat{\mathbf{x}}^k)} a_j$), where $\omega =
\sum_{j\in J^{k}} a_j$ and $K := U$ if the upper bound was pegged at iteration
$k$ and $K := L$ if the lower bound was pegged at iteration $k$. If $|K| < |
J^{k+1} |$ then this will save us some operations.  A similar update of
$\hat{\mathbf{x}}$ for the quadratic knapsack problem is performed in
\cite[Section 3]{RJL92}, \cite{BSS96} and \cite[Section 5.1]{Kiw08b}.

As in \cite[Algorithm 3.1]{Kiw08b}, our algorithm will stop if $\Delta^k =
\nabla^k$, while the algorithm in \cite[Section 2]{BrS02} stops only if
$L(\hat{x}^k) \cup U(\hat{x}^k) = \emptyset$. Moreover, in Steps 3.1 and 3.2,
we peg the variables that violate the bounds and calculate $b^k$ explicitly,
while in \cite[Section 2]{BrS02} the index $j$ is added to the set of violated
bounds ($L$ or $U$) and $b^k$ is calculated as $b - \sum_{j \in L} g_j(l_j)
- \sum_{j \in U} g_j(u_j)$.

According to Proposition \ref{pegglow}, if Case 2 in Section \ref{sec:taop}
holds, Step 2 in the algorithm is modified as follows (an analogous
modification can be defined for the relaxation algorithms in Sections
\ref{sec:mod2} through \ref{sec:tandf}):

\begin{tabular}{l}
\\ \textbf{Step 2'} (implicit evaluation): 
\\ 
$\qquad$ Determine
$U(\hat{\mathbf{x}}^k)$ and $L(\hat{\mathbf{x}}^k)$ while computing $\Delta^k
= \alpha^k_+ -\beta^k_+ $ and $\nabla^k = \beta^k_-- \alpha^k_-$. 
\\ 
$\qquad$
If $\Delta^k > \nabla^k$, then go to Step 3.2. \\ $\qquad$ If $\Delta^k <
\nabla^k$, then go to Step 3.1. 
\\ 
$\qquad$ If $\Delta^k = \nabla^k$, then set $x_j^* :=
l_j$ for $j \in L(\hat{\mathbf{x}}^k)$, $x_j^* := u_j$ for $j \in
U(\hat{\mathbf{x}}^k)$, 
\\ 
$\qquad$ $\quad$ $x_j^* := x_j^k$ for $j \in J^k
\setminus \{ L(\hat{\mathbf{x}}^k) \cup U(\hat{\mathbf{x}}^k) \}$, and stop.
\\ \\
\end{tabular}

\begin{remark}
For the equality problem (\ref{problemeq}), $\mu$ is unrestricted; hence the
algorithm for problem (\ref{problemeq}) will be similar to the above, except
we ignore Step 0. 
\end{remark}

\begin{remark}
In Step 1 we have to calculate $\hat{x}_j$ $| J^k|$ times. In Step 2 we need
to find $x_j$ which needs at most $2| J^k |$ comparisons. We also have to
calculate $\nabla^k$ and $\Delta^k$, which implies $2| L \cup U |$ operations.
\end{remark}

\subsubsection{Algorithm: Dual determination with Implicit evaluation of the 
Relaxed problem (DIR2)}\label{sec:mod2} 

Instead of evaluating the primal variables in Step 1 as in the algorithm PIR2
in Section \ref{sec:bih}, we can evaluate the dual variable $\mu$. Note that
if we evaluate the dual variable, then we have to determine the breakpoints in
the initialization. Our modification of the algorithm in Section \ref{sec:bih}
takes the following form (Steps 0 and 3 are same as in PIR2 and are therefore
not repeated):

\begin{tabular}{l}
\\ 
\textbf{Initialization: } 
\\
$\qquad$ Set $k:=1$, $J^k := J$, and $b^k := b$. 
\\
$\qquad$ Calculate breakpoints
$\mu^l:= ( \mu^l_j )_{j \in J^k}$, and $\mu^u:= ( \mu^u_j )_{j \in J^k} $ as in (\ref{xmu}). 
\\  
\textbf{Step 1} (solve relaxed dual problem):
\\
$\qquad$ Find the optimal dual variable $\hat{\mu}^k$ of the relaxed problem (\ref{problemrelax}). 
\\ 
\textbf{Step 2} (implicit evaluation): \\ $\qquad$
Determine $L(\hat{\mu}^k)$ and $U(\hat{\mu}^k)$ while
computing $\Delta^k := \alpha^k_+ -\beta^k_+ $ and $\nabla^k :=
\beta^k_-- \alpha^k_-$. \\
$\qquad$ If $\Delta^k > \nabla^k$, then go to Step 3.1.  
\\ 
$\qquad$ If $\Delta^k < \nabla^k$, then go to Step 3.2. 
\\ 
$\qquad$ If $\Delta^k = \nabla^k$, then set $x_j^* := l_j$ for $j \in L(\hat{\mu}^k)$, $x_j^* := u_j$ for $j \in U(\hat{\mu}^k)$ 
\\ 
$\qquad$ $\quad$ $x_j^* := x_j^k$ for $j \in J \setminus \{ L(\hat{\mu}^k) \cup
U(\hat{\mu}^k) \}$, and stop.   
\\
\end{tabular} 

\begin{remark}
In Step 1 we need to find the optimal dual solution $\mu^k$ to the relaxed
problem. In Step 2 we need at most $2|J^k|$ comparisons but we only need to
calculate $\hat{x}_j(\mu)$ for $j \in \{ L \cup U \}$.  For the evaluation we
need to calculate $\nabla^k$ and $\Delta^k$, which implies $2| L \cup U |$
operations.
\end{remark}

\subsubsection{Algorithm: Dual determination with Explicit evaluation of the 
Relaxed problem (DER2) }\label{sec:ste} As in the algorithm DIR2 in Section
\ref{sec:mod2} we evaluate the dual variable but instead of evaluating the
relaxed solution implicitly we do it explicitly. Define $\beta_l := \sum_{j\in
  L} g_j(l_j)$ and $\beta_u :=\sum_{j\in U} g_j(u_j)$. The algorithm follows
(Steps 0 and 3 are same as in PIR2 in Section \ref{sec:bih} and are therefore
not repeated):

\begin{tabular}{l}
\\ \textbf{Initialization: }
\\ 
$\qquad$ Set $k:=1$, $J^k := J$, and $b^k := b$. 
\\
$\qquad$ Calculate breakpoints
$\mu^l:= ( \mu^l_j )_{j \in J^k }$, $\mu^u:= ( \mu^u_j )_{j \in J^k }$ as in 
(\ref{xmu}).
\\  
\textbf{Step 1} (solve relaxed dual problem):
\\ 
$\qquad$ Find the optimal dual variable $\hat{\mu}^k$ of the relaxed problem 
(\ref{problemrelax}). 
\\ 
\textbf{Step 2} (explicit evaluation):
\\ 
$\qquad$ Determine $U(\hat{\mu}^k)$ and $L(\hat{\mu}^k)$ and
calculate $\delta (\hat\mu^k) := \sum_{j \in J^k \setminus \{
 U(\hat\mu^k) \cup L(\hat\mu^k) \} } g_j(x_j(\hat\mu^k)) + \beta_l + \beta_u$.
\\ 
 $\qquad$ If $\delta (\hat\mu^k) > b^k$, then go to Step 3.1. 
\\ 
$\qquad$ If $\delta (\hat\mu^k) < b^k$, then go to Step 3.2. 
 \\ 
 $\qquad$ If $\delta (\hat\mu^k) = b^k$, then set $x_j^* := l_j$ for $j \in 
L(\hat{\mu}^k)$, $x_j^* := u_j$ for $j \in U (\hat{\mu}^k)$ 
\\ 
$\qquad$ $\quad$ $x_j^* := x_j^k$ for $j \in J \setminus \{ L(\hat{\mu}^k) \cup
U(\hat{\mu}^k) \}$, and stop.
\\ \\
\end{tabular}

\noindent 
The algorithm uses the principle of the algorithm in \cite[Algorithm 1]{Ste01}.

\begin{remark}
In Step 1 we need to find the value of $\hat\mu^k$ from the relaxed
problem. In Step 2 we need at most $2|J^k|$ comparisons and we need to
calculate $x^k_j$ for $j \in J^k \setminus \{ L \cup U \}$. For the evaluation
we need to calculate $\delta(\hat\mu^k)$, which implies $|J^k|$ operations.
\end{remark}

\subsubsection{Algorithm: Dual determination modification with blended 
evaluation of the Relaxed problem (DBR2)} \label{sec:bh4} 

Consider the implicit evaluation in Section \ref{sec:mod2}. We calculate
$\nabla^k$ and $\Delta^k$ from $\nabla^k := \sum_{j\in L(\hat\mu^k)} g_j(l_j)
- \sum_{j\in U(\hat\mu^k)} g_j(\hat{x}^k_j)$ and $\Delta^k := \sum_{j\in
  U(\hat\mu^k)} g_j(\hat{x}^k_j) - \sum_{j\in L(\hat\mu^k)} g_j(u_j)$, which
implies $2P|L^k \cup U^k |$ operations, where $P$ is an integer associated
with the number of operations it takes to calculate $g_j(x_j)$. Moreover, we
have to determine $\hat{x}_j$ for $j \in \{ L^k \cup U^k \}$, which implies
$Q|L^k \cup U^k |$ operations, where $Q$ is an integer associated with the
number of operations it takes to determine $x_j(\hat\mu)$.

Now consider the explicit evaluation: We have to calculate $\delta (\hat\mu^k)
:= \sum_{j \in J^k \setminus \{ U(\hat\mu^k) \cup L(\hat\mu^k) \} }
g_j(x_j(\hat\mu^k)) + \sum_{j \in L(\hat\mu^k)} g_j(l_j) + \sum_{j \in
  U(\hat\mu^k)} g_j(u_j)$, which implies $P|J^k|$ operations. Moreover, we
have to determine $\hat{x}_j$ for $j \in J^k \setminus \{ L^k \cup U^k \}$,
which implies $Q|J^k \setminus \{ L^k \cup U^k \} |$ operations.

Hence, if $(P+Q)|J^k| < (2P+2Q)|U(\mu^k) \cup L(\mu^k)|$ or, equivalently,
$|J^k| < 2|U(\mu^k) \cup L(\mu^k)|$, then using the explicit evaluation of the
relaxed solution $\mathbf{\hat{x}}$ in Step 2 would require less operations,
and it would be more successful to use the algorithm in Section
\ref{sec:ste}. If however $|J^k| > 2|U(\mu^k) \cup L(\mu^k)|$, then there will
be less operations if we use the algorithm in Section \ref{sec:mod2}. So, we
propose a new algorithm that utilizes the cardinalities of the sets $J^k$,
$U(\hat{\mu}^k)$ and $L(\hat{\mu}^k)$: from the cardinalities we make the
decision whether to use an explicit or implicit evaluation in Step 2. We
consider the following modification of the algorithm PIR2 in Section
\ref{sec:bih}:

\begin{tabular}{l}
\\ \textbf{Initialization: }
\\ 
$\qquad$ Set $k:=1$, $J^k := J$, and $b^k := b$. 
\\
$\qquad$ Calculate breakpoints
$\mu^l:= ( \mu^l_j )_{j \in J^k }$, $\mu^u:= ( \mu^u_j )_{j \in J^k }$ as in
(\ref{xmu}).    
\\  
\textbf{Step 1: } (solve relaxed dual problem):
\\
$\qquad$ Find the optimal dual variable $\hat{\mu}^k$ of the relaxed problem
(\ref{problemrelax}). 
\\ 
\textbf{Step 1.1: } (implicit or explicit evaluation?):
\\ 
$\qquad$
Determine $U(\hat{\mu}^k)$ and $L(\hat{\mu}^k)$. 
\\ 
$\qquad$ If $|J^k| < 2|U(\hat{\mu}^k) \cup L(\hat{\mu}^k)|$ then use explicit
evaluation (continue with algorithm DER2 in 
\\ 
$\qquad$
Section  \ref{sec:ste}), otherwise use implicit evaluation (continue with
algorithm DIR2 in Section \ref{sec:mod2}).  
\\ \\
\end{tabular}

\subsubsection{3- and 5-sets pegging for relaxation algorithms}\label{sec:tandf}
From the proof of convergence of the relaxation algorithm (see Section
\ref{sec:proofrel}) we have that the algorithm improves the lower or the upper
bound for the dual variable in each iteration. Hence, similar to the
breakpoint algorithm (see Sections \ref{sec:rmm} and \ref{sec:MB5}) it is
possible to apply 3- and 5-sets pegging for the dual relaxation algorithms
DIR2, DER2 and DBR2. Similar to the relaxation algorithm we will denote a
relaxation algorithm that uses 3-sets-pegging with a suffix ''3'', e.g., DBR3,
and one that uses 5-sets-pegging with a suffix ''5'', e.g., DBR5. The
implementation of 3- and 5-sets pegging is similar for the three dual
relaxation algorithms; therefore only DIR3 and DIR5 are given (Step 0 is
similar as for PIR in Section \ref{sec:bih}):

\begin{tabular}{l}
\\ 
\textbf{DIR3: } \\
\textbf{Initialization: } 
\\
$\qquad$ Set $k:=1$, $J^k := J$, $M^k := \emptyset $, $b^k := b$, $\underline{\mu} = -\infty$, and $\overline{\mu} = \infty$. 
\\
$\qquad$ Calculate breakpoints
$\mu^l:= ( \mu^l_j )_{j \in J^k }$, $\mu^u:= ( \mu^u_j )_{j \in J^k}$ as in (\ref{xmu}).
\\ 
\textbf{Step 1} (solve relaxed dual problem):
\\ 
$\qquad$ Find the optimal dual variable $\hat{\mu}^k$ of the relaxed problem (\ref{problemrelax}). 
\\ 
\textbf{Step 2} (implicit evaluation): \\ $\qquad$
Determine $L(\hat{\mu}^k)$ and $U(\hat{\mu}^k)$ from $J^k$ and
compute $\Delta^k = \alpha^k_+ -\beta^k_+ $ and $\nabla^k =
\beta^k_-- \alpha^k_-$. \\
$\qquad$ If $\Delta^k > \nabla^k$, then go to Step 3.1.  
\\ 
$\qquad$ If $\Delta^k < \nabla^k$, then go to Step 3.2. 
\\ 
$\qquad$ If $\Delta^k = \nabla^k$, then set $x_j^* = l_j$ for $j \in L(\hat{\mu}^k)$, $x_j^* = u_j$ for $j \in U(\hat{\mu}^k)$ 
\\ 
$\qquad$ $\quad$ $x_j^* = x_j(\hat{\mu}^k)$ for $j \in \{ J^k \cup M^k \} \setminus \{ L(\hat{\mu}^k) \cup
U(\hat{\mu}^k) \}$, and stop.   
\\ 
\textbf{Step 3.1} (peg lower bounds): 
\\ 
$\qquad$ Update lower bound, $\underline{\mu} := \hat{\mu}^k$, and resource $b^{k+1} := b^k - \beta^k_-$. \\  
$\qquad$ Set $x_j^* := l_j$ for $j \in L^k$, $J^{k+1} := J^k \setminus L^k$ and $M^{k+1} := M^{k}$. 
\\ 
$\qquad$ For $j \in J^{k+1}$: If $\underline{\mu},\overline{\mu} \in [\mu_j^u,\mu_j^l]$
then $J^{k+1} := J^{k+1}
\setminus \{j\}$, $M^{k+1} := M^{k+1} \cup \{j\}$.
\\
$\qquad$  If $J^{k+1} = \emptyset$ then find optimal solution and stop, else set $k := k+1$ and go to Step 1. 
\\ 
\textbf{Step 3.2} (peg upper bounds): 
\\
$\qquad$ Update upper bound, $\bar{\mu} := \hat{\mu}^k$, and resource $b^{k+1} := b^k - \beta^k_+$. 
\\  
$\qquad$ Set $x_j^* := u_j$ for $j \in U^k$, $J^{k+1} := J^k \setminus U^k$ and $M^{k+1} := M^{k}$. 
\\ 
$\qquad$ For $j \in J^{k+1}$: If $\underline{\mu},\overline{\mu} \in [\mu_j^u,\mu_j^l]$
then $J^{k+1} := J^{k+1}
\setminus \{j\}$, $M^{k+1} := M^{k+1} \cup \{j\}$.
\\
$\qquad$ If $J^{k+1} = \emptyset$ then find optimal solution and stop, else set $k := k+1$ and go to Step 1. 
\\
\\
\end{tabular}

\noindent In Steps 3.1 and 3.2, if $J^{k+1} = \emptyset$ then we can find the
optimal solution from $M$ since we know that for all $j \in M$ it holds that
$l_j < x_j < u_j$, i.e., we don't have to consider the constraint
(\ref{problemc}). Further, in Step 3.1 (and analogusly for Step 3.2) when
searching for $j\in J^{k+1}$ such that $\underline{\mu},\overline{\mu} \in
[\mu_j^u,\mu_j^l]$, we only have to consider $j\in J^{k+1} \setminus U^k
$, since we know that if $j \in U^k$ then $\hat{\mu}^k \le \mu_j^u$. Note that
for the case where $\hat{\mu}^k = \mu_j^u$ the corresponding index $j$ will
continue to belong to $J^{k+1}$. Finally we note that we don't have to check
if $\underline{\mu},\overline{\mu} \in [\mu_j^u,\mu_j^l]$ if $\underline{\mu}
= -\infty$ and/or $\overline{\mu} = \infty$.

\begin{tabular}{l}
\\ 
\textbf{DIR5: } \\

\textbf{Initialization: } 
\\
$\qquad$ Set $k:=1$, $J^k := J$, $M^k = U^k_+ = L^k_- := \emptyset $, $b^k := b$, $\underline{\mu} = -\infty$, and $\overline{\mu} = \infty$. 
\\
$\qquad$ Calculate breakpoints
$\mu^l:= ( \mu^l_j )_{j \in J^k }$, $\mu^u:= ( \mu^u_j )_{j \in J^k }$ as in (\ref{xmu}).
\\ 
\textbf{Step 1} (solve relaxed dual problem):
\\ 
$\qquad$ Find the optimal dual variable $\hat{\mu}^k$ of the relaxed problem (\ref{problemrelax}). 
\\ 
\textbf{Step 2} (implicit evaluation): \\ $\qquad$
Determine $L(\hat{\mu}^k)$ and $U(\hat{\mu}^k)$ from $J^k  \cup L_-^{k} \cup U_+^{k}$,
compute $\Delta^k = \alpha^k_+ -\beta^k_+ $ and $\nabla^k =
\beta^k_-- \alpha^k_-$. \\
$\qquad$ If $\Delta^k > \nabla^k$, then go to Step 3.1.  
\\ 
$\qquad$ If $\Delta^k < \nabla^k$, then go to Step 3.2. 
\\ 
$\qquad$ If $\Delta^k = \nabla^k$, then set $x_j^* = l_j$ for $j \in L(\hat{\mu}^k)$, $x_j^* = u_j$ for $j \in U(\hat{\mu}^k)$ 
\\ 
$\qquad$ $\quad$ $x_j^* = x_j(\hat{\mu}^k)$ for $j \in \{ J^k \cup M^k \cup L^k_- \cup U^k_+  \} \setminus \{ L(\hat{\mu}^k) \cup U(\hat{\mu}^k) \}$, and stop.   
\\ 
\textbf{Step 3.1} (peg lower bounds): 
\\ 
$\qquad$ Update lower bound, $\underline{\mu} := \hat{\mu}^k$, and resource $b^{k+1} := b^k - \beta^k_-$. 
\\
$\qquad$ Set $x_j^* := l_j$ for $j \in L^k$, $J^{k+1} := J^k \setminus L^k$ and $L_-^{k+1} := L_-^{k} \setminus L^k$. 
\\
$\qquad$ For $j \in \{ J^{k+1} \setminus U^k_+ \}$: If $\mu_m \le \mu_j^l$ then $J^{k+1} := J^{k+1} \setminus \{j\}$ and $L_-^{k+1} := L_-^{k+1}  \cup \{j\}$. 
\\
$\qquad$ For $j \in \{ U_+^k \setminus U^k \}$: If $\underline{\mu}\le\mu^l_j$
then  $U_+^k  := U_+^k \setminus \{j\}$ and $M^k := M^k \cup \{j\}$. 
\\ 
$\qquad$ Set $M^{k+1} := M^{k}$ and $U_+^{k+1} := U_+^{k}$. \\ 
$\qquad$ If $J^{k+1} \cup L_-^{k+1} \cup U_+^{k+1}= \emptyset$ then find optimal solution and stop,
\\
$\qquad$ $\quad$ else set $k := k+1$ and  go to Step 1.
\\ 
\textbf{Step 3.2} (peg upper bounds):
\\ 
$\qquad$ Update upper bound, $\bar{\mu} := \hat{\mu}^k$, and resource $b^{k+1} := b^k - \beta^k_+$. 
\\
$\qquad$ Set $x_j^* := u_j$ for $j \in U^k$, $J^{k+1} := J^k \setminus U^k$ and $U_+^{k+1} := U_+^{k} \setminus U^k$. 
\\
$\qquad$ For $j \in \{ J^{k+1} \setminus L^k_- \}$: If $\mu_m \le \mu_j^l$ then $J^{k+1} := J^{k+1} \setminus \{j\}$ and $U_+^{k+1} := U_+^{k+1}  \cup \{j\}$. 
\\
$\qquad$ For $j \in \{ L_-^k \setminus L^k \}$: If $\underline{\mu}\le\mu^l_j$
then  $L_-^k  := L_-^k \setminus \{j\}$ and $M^k := M^k \cup \{j\}$. 
\\ 
$\qquad$ Set $M^{k+1} := M^{k}$ and $L_-^{k+1} := L_-^{k}$. \\ 
$\qquad$ If $J^{k+1} \cup L_-^{k+1} \cup U_+^{k+1}= \emptyset$ then find optimal solution and stop, 
\\
$\qquad$ $\quad$ else set $k := k+1$ and  go to Step 1.
\\ 
\\
\end{tabular} 

\begin{remark}
Note that for DBR2 in Section \ref{sec:bh4} we determine whether we should
continue with DIR or DER from $| J^k | < 2 | U(\hat{\mu}^k) \cup
L(\hat{\mu}^k) |$. This condition needs to be modified for the 3- and the
5-sets algorithms DBR3 and DBR5. This is done by substituting the
condition mentioned by $| J^k \cup M^k | < 2 | U^k(\hat{\mu}^k) \cup L^k(\hat{\mu}^k) |$
and $| J^k \cup M^k \cup L_-^k \cup U_+^k | < 2 | U^k(\hat{\mu}^k) \cup
L^k(\hat{\mu}^k) |$ for 3- and 5-sets pegging, respectively.
\end{remark}

\subsection{Optimality of the relaxation algorithms} \label{sec:proofrel}

For the inequality problem (\ref{problem}), optimality and validation for the
pegging process for the primal relaxation algorithm PIR2 was established by
Bretthauer and Shetty \cite[Propositions 1--9]{BrS02}.  Let $k^*$ be the
iteration where the algorithm terminates. Then Bretthauer and Shetty state
that the primal relaxation algorithm generates the following solution for
problem (\ref{problemeq}):

\begin{subequations}\label{aopt}
\begin{align}
\mu &= \mu^{k^*} = -\phi_j^\prime(x_j^{k^*})/ g_j(x_j^{k^*}), & j &\in
J^{k^*}, \label{aopt1}  
\\ 
\rho_j &= \phi_j'(l_j^{k^*}) + \mu^{k^*} g_j'(x_j^{k^*}), & j &\in L, \label{aopt2} 
\\ 
\rho_j &= 0, & j &\in J^{k^*} \cup U, \label{aopt3} 
\\ 
\lambda_j &= -\phi_j'(u_j^{k^*}) - \mu^{k^*} g_j'(x_j^{k^*}), & j &\in
U, \label{aopt4} 
\\ 
\lambda_j &= 0, & j &\in J^{k^*} \cup L, \label{aopt5} 
\\ 
x_j &= l_j, & j &\in L, \label{aopt6} 
\\ 
x_j &= u_j, & j &\in U, \label{aopt7} 
\\ 
x_j &= x_j^{k^*}, & j &\in J^{k^*}. \label{aopt8}
\end{align}
\end{subequations} 

\noindent Bretthauer and Shetty establish that this solution fulfills all KKT
conditions, and therefore is optimal.

The optimality and convergence for the equality problem (\ref{problemeq}) can
be established similarly to the proof for the inequality problem given in
\cite{BrS02} except for the proof for feasibility of the dual variables
corresponding to the lower and upper bounds (Propositions 8 and 9 in
\cite{BrS02}). Additionally we do not have to prove that $\mu^* \ge 0$
(Proposition 4 in \cite{BrS02}) for the equality problem. Therefore we give a
complementary proof for the feasibility of the dual variables corresponding to
the lower and upper bounds. For the equality problem (\ref{problemeq}) we have
that $g_j(x_j) = a_jx_j$ and in the proof we will assume that $a_j > 0$.

\begin{lemma}\label{nd3}
Consider the equality problem (\ref{problemeq}) and algorithm PIR2 in Section
\ref{sec:bih}.

(a) If $\nabla^k > \Delta^k$ then $\mu^{k^*}  \ge \mu^k$.

(b) If $\nabla^k < \Delta^k$ then $\mu^{k^*}  \le \mu^k$.
\end{lemma}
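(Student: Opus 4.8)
The plan is to reduce both parts to a single monotonicity statement about the clamped resource consumption viewed as a function of the multiplier, evaluated on the current reduced index set $J^k$. Let me abbreviate the clamped resource level by $R^k(\mu):=\sum_{j\in J^k} a_j x_j(\mu)$, where $x_j(\mu)$ is the projected primal value from (\ref{xmu})/(\ref{xprim}). Since the text fixes the equality problem with $g_j(x_j)=a_jx_j$ and $a_j>0$, we are in Case~2 of Section~\ref{sec:taop}: each $g_j$ is increasing and $-\phi_j'/a_j$ is decreasing, so each $x_j(\mu)$ is nonincreasing in $\mu$ and hence $R^k$ is nonincreasing in $\mu$. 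Everything then rests on locating $\mu^k$ relative to the multiplier that makes $R^k$ equal to the reduced right-hand side $b^k$.

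First I would record the resource identity induced by the projection. Rerunning the computation in the proof of Proposition~\ref{prop1brs}, but on the reduced data $(J^k,b^k)$ and with the equality constraint, and using that the relaxed solution obeys $\sum_{j\in J^k} a_j\hat{x}_j=b^k$, gives
\[
R^k(\mu^k)\;=\;\sum_{j\in J^k} a_j x_j(\mu^k)\;=\;b^k+\nabla^k-\Delta^k,
\]
with $\nabla^k,\Delta^k\ge 0$ because $g_j$ is increasing (cf.\ (\ref{nabladelta})). Granting that the terminal multiplier $\mu^{k^*}$ is the optimal multiplier of the reduced problem at iteration $k$, i.e.\ that $R^k(\mu^{k^*})=b^k$, part~(a) is then immediate: if $\nabla^k>\Delta^k$ then $R^k(\mu^k)=b^k+(\nabla^k-\Delta^k)>b^k=R^k(\mu^{k^*})$, and since $R^k$ is nonincreasing this forces $\mu^k\le\mu^{k^*}$; part~(b) follows symmetrically from $R^k(\mu^k)<b^k$.

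The step I expect to be the real obstacle is justifying $R^k(\mu^{k^*})=b^k$, that is, that the variables pegged before iteration $k$ are fixed at exactly the values they take under the optimal multiplier $\mu^{*}$ of (\ref{problemeq}); equivalently $R^k(\mu^{*})=b^k$ together with $\mu^{k^*}=\mu^{*}$. I would establish this by induction on $k$, run in tandem with pegging validity, using $\mu^{*}$ (which exists by assumption). The base case $k=1$ is $R^1(\mu^{*})=\sum_{j\in J}a_jx_j(\mu^{*})=b$. For the inductive step, the identity and monotonicity already deliver, in Case~(a), $\mu^k\le\mu^{*}$; then every index $j$ pegged to its lower bound at iteration $k$ satisfies $\mu^{*}\ge\mu^k\ge\mu_j^l$, so $x_j(\mu^{*})=l_j$ by (\ref{xmu})---precisely the value to which it is pegged---and subtracting these contributions carries $R^k(\mu^{*})=b^k$ to $R^{k+1}(\mu^{*})=b^{k+1}$; Case~(b) is analogous with upper bounds and $\mu_j^u$. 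Finally, at termination $\Delta^{k^*}=\nabla^{k^*}$ yields $R^{k^*}(\mu^{k^*})=b^{k^*}=R^{k^*}(\mu^{*})$, whence $\mu^{k^*}=\mu^{*}$ by strict monotonicity of $R^{k^*}$ (a consequence of the strict convexity of $\phi_j$). This induction simultaneously reproves the pegging validity and yields the stated inequalities, which are then exactly what is needed to sign the bound multipliers $\rho_j$ and $\lambda_j$ in (\ref{aopt}).
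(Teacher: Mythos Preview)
Your argument is correct and supplies substantially more detail than the paper itself, which merely cites Proposition~7 of \cite{BrS02}. The route you take---defining the clamped resource function $R^k(\mu)=\sum_{j\in J^k}a_jx_j(\mu)$, reading off the identity $R^k(\mu^k)=b^k+\nabla^k-\Delta^k$ from Proposition~\ref{prop1brs}, and then running a forward induction on the invariant $R^k(\mu^{*})=b^k$ with $\mu^{*}$ the optimal multiplier of the \emph{original} problem---is clean and has the pleasant side effect of delivering pegging validity and the multiplier inequalities in a single sweep. The Bitran--Hax sketch that the paper records just before defining PIR2 (showing $\hat\mu'\le\hat\mu$ after pegging upper bounds) is the step-to-step version of the same monotonicity, but on its own does not immediately give the global comparison with $\mu^{k^*}$ once the algorithm alternates between pegging lower and upper bounds; your global reference point $\mu^{*}$ sidesteps that bookkeeping.

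One small point to flag: your final identification $\mu^{k^*}=\mu^{*}$ rests on strict monotonicity of $R^{k^*}$, which you justify via strict convexity of the $\phi_j$. The paper does not list strict convexity as a standing assumption (only convexity and differentiability), and in the degenerate case where every $j\in J^{k^*}$ lands at a bound at termination, the optimal multiplier need not be unique. This is a genuine edge case rather than a flaw in your strategy: either note that strict convexity is implicitly needed for the relaxed problem~(\ref{problemrelax}) to have a unique multiplier (which the algorithm presumes anyway), or observe that your induction already proves the inequalities $\mu^k\le\mu^{*}$ (resp.\ $\ge$) for \emph{some} optimal multiplier, which is all Proposition~\ref{mythm} actually consumes.
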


\begin{proof}
The proof is similar to that of Proposition 7 in \cite{BrS02}.
\end{proof}

\begin{proposition}[feasibility of dual variables corresponding to 
bounds]\label{mythm} For problem (\ref{problemeq}), the solution (\ref{aopt})
  generated by PIR2 in Section \ref{sec:bih} satisfies the feasibility of the
  dual variables ($\rho_j$ and $\lambda_j$) corresponding to the lower and
  upper bounds.
\end{proposition}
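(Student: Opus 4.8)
The plan is to reduce dual feasibility to two scalar inequalities on the terminal multiplier $\mu^{k^*}$, and then obtain those inequalities by chaining the pegging condition at the iteration where each variable is fixed with Lemma~\ref{nd3}. Since $g_j(x_j) = a_j x_j$ with $a_j > 0$, we are in Case~2 of Section~\ref{sec:taop}: $g_j$ is increasing and $\mu(x_j) = -\phi_j'(x_j)/a_j$ is decreasing. By (\ref{aopt3}) and (\ref{aopt5}) the only multipliers that are not identically zero are $\rho_j$ for $j \in L$ and $\lambda_j$ for $j \in U$, so it suffices to show $\rho_j \ge 0$ for $j \in L$ and $\lambda_j \ge 0$ for $j \in U$ (for the free indices $j \in J^{k^*}$ both multipliers vanish, so feasibility is trivial).

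First I would rewrite the two definitions using $g_j'(x_j^{k^*}) = a_j$. For $j \in L$ we have $x_j^{k^*} = l_j$, so (\ref{aopt2}) gives $\rho_j = \phi_j'(l_j) + \mu^{k^*} a_j$; as $a_j > 0$ this is nonnegative exactly when $\mu^{k^*} \ge -\phi_j'(l_j)/a_j = \mu_j^l$. Likewise, for $j \in U$ we have $x_j^{k^*} = u_j$, so (\ref{aopt4}) gives $\lambda_j = -\phi_j'(u_j) - \mu^{k^*} a_j$, which is nonnegative exactly when $\mu^{k^*} \le -\phi_j'(u_j)/a_j = \mu_j^u$. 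Thus the claim is equivalent to $\mu^{k^*} \ge \mu_j^l$ for every $j \in L$ and $\mu^{k^*} \le \mu_j^u$ for every $j \in U$.

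Next I would fix $j \in L$ and let $k_j$ be the iteration at which $j$ is pegged to its lower bound. In Case~2 this fixing occurs through Step~3.1, which (via Step~2$'$) is reached precisely when $\nabla^{k_j} > \Delta^{k_j}$; moreover $j \in L(\hat{x}^{k_j})$, and by the primal--dual equivalence of Section~\ref{sec:primDualEval} (using that $\mu(\cdot)$ is decreasing, so $\hat{x}_j^{k_j} \le l_j$ is the same as $\hat\mu^{k_j} \ge \mu_j^l$) this yields $\hat\mu^{k_j} \ge \mu_j^l$. Lemma~\ref{nd3}(a) applied at iteration $k_j$ then gives $\mu^{k^*} \ge \hat\mu^{k_j} \ge \mu_j^l$, whence $\rho_j \ge 0$. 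The argument for $j \in U$ is symmetric: fixing to the upper bound occurs in Step~3.2, reached when $\nabla^{k_j} < \Delta^{k_j}$, with $\hat\mu^{k_j} \le \mu_j^u$; Lemma~\ref{nd3}(b) gives $\mu^{k^*} \le \hat\mu^{k_j} \le \mu_j^u$, so $\lambda_j \ge 0$.

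The one place requiring care---and the likely main obstacle---is matching the pegging branch to the correct inequality $\nabla \lessgtr \Delta$ under Case~2 (it is Step~2$'$, not Step~2, that governs PIR2 here) and confirming the two equivalences $j\in L \Leftrightarrow \hat\mu^{k_j}\ge\mu_j^l$ and $j\in U \Leftrightarrow \hat\mu^{k_j}\le\mu_j^u$ for a variable pegged at an intermediate iteration rather than at termination. Once those are pinned down, the proof is just the two chained inequalities above; the genuinely substantive monotonicity fact, that pegging a lower bound forces $\mu^{k^*}\ge\hat\mu^{k_j}$ (respectively an upper bound forces $\mu^{k^*}\le\hat\mu^{k_j}$), is already supplied by Lemma~\ref{nd3}.
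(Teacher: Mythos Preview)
Your proposal is correct and follows essentially the same approach as the paper: for each $j\in L$ you use that pegging occurred at an iteration with $\nabla^{k_j}>\Delta^{k_j}$ (Case~2, Step~2$'$), that $\hat x_j^{k_j}\le l_j$ translates via monotonicity of $-\phi_j'/a_j$ into $\hat\mu^{k_j}\ge \mu_j^l$, and then chain with Lemma~\ref{nd3}(a); the case $j\in U$ is symmetric. The only cosmetic difference is that the paper writes the chain directly as $\rho_j/a_j=\phi_j'(l_j)/a_j+\mu^{k^*}\ge \phi_j'(\hat x_j^k)/a_j+\mu^{k^*}=-\mu^k+\mu^{k^*}\ge 0$ rather than passing through the breakpoint notation $\mu_j^l,\mu_j^u$.
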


\begin{proof}
(i) For $j \in J^{k^*} \cup U$, we have from (\ref{aopt3})
that $\rho_j = 0$.

(ii) For $j \in L$, we have from (\ref{aopt3}) that $\rho_j = \phi_j'(l_j) +
\mu^{k^*} a_j$. We know that all variables $x_j$ with $j \in L$ were pegged in
the iterations $k$ where $\nabla^k > \Delta^k$. For these iterations $k$ we
have $\hat{x}_j^k \le l_j$. Further, from the convexity of $\phi_j$ and the
assumption that $a_j>0$, we have that $\mu (x_j) = -\phi_j'(x_j)/a_j$ is
decreasing in $x_j$. Hence,
\begin{equation}
\frac{\rho_j}{a_j} = \frac{\phi_j'(l_j)}{a_j} + \mu^{k^*} \ge
\frac{\phi_j'(\hat{x}_j^k)}{a_j} + \mu^{k^*} = -\mu^k + \mu^{k^*} \ge 0,
\end{equation} 
where the last inequality follows from Lemma \ref{nd3}(a).

(iii) For $j \in J^{k^*} \cup L$, we have from
(\ref{aopt3}) that $\lambda_j = 0$.

(iv) For $j \in U$, we have from (\ref{aopt5}) that $\lambda_j = -
\phi_j'(u_j) - \mu^{k^*} a_ju_j$. We know that all variables $x_j$ with $j \in
U$ were pegged in the iterations where $\nabla^k < \Delta^k$. For these
iterations $k$ we have $\hat{x}_j^k \ge u_j$. Further, from the convexity of
$\phi_j$ and the assumption that $a_j>0$, we have that $\mu (x_j) =
-\phi_j'(x_j)/a_j$ is decreasing in $x_j$. Hence,
\begin{equation}
\frac{\lambda_j}{a_j} = -\frac{\phi_j'(u_j)}{g'_j(u_j)} - \mu^{k^*} \ge
-\frac{\phi_j'(\hat{x}_j^k)}{g'_j(\hat{x}_j^k)} - \mu^{k^*} = \mu^k -
\mu^{k^*} \ge 0,
\end{equation} 
where the last inequality follows from Lemma \ref{nd3}(b). 
\end{proof}

\noindent The algorithms in Sections \ref{sec:mod2}, \ref{sec:ste}, and
\ref{sec:bh4} will also converge to the optimal solution since they are
equivalent to PIR.

\begin{remark}
If we introduce the additional assumption that $\phi_j$ and $g_j$ are twice
differentiable and that $g_j^\prime >0$, an alternative convergence result for
the dual relaxation algorithm is found in \cite{Ste01}.
\end{remark}

\subsection{Time complexity}
Consider algorithm PIR2 in Section \ref{sec:bih}. In Step 0 we need at most
$2n$ comparisons to determine the primal variables and $C_0n$ operations, for
a constant $C_0$, to determine if the solution is feasible or not. In Step 1,
we solve the relaxed problem, which gives $C_1n$ operations for a constant
$C_1$. In Step 2 we perform at most $2n$ comparisons to determine the lower
and upper sets and we compute $\Delta^k$ and $\nabla^k$, which gives at most
$C_2n$ operations for a constant $C_2$. Steps 3.1 and 3.2 give at most $2n+1$
operations. Further, in the worst case the algorithm only pegs one primal
variable in each iteration, which results in $n$ iterations. Hence, we conclude
that the algorithm has a complexity of $O(n^2)$.


\section{A quasi-Newton algorithm (NZ)} \label{sec:zen_algor}

Nielsen and Zenios \cite[Section 1.4]{NiZ92} develop a quasi-Newton method for
finding the dual optimal solution $\mu^*$ of the problem (\ref{problemeq}). It
is assumed that the objective term $\phi_j$ is strictly convex with a
derivative $\phi^\prime_j$ whose range is $\mathbb{R}$. They compare their
numerical method with three linesearch methods \cite{HKL80,CeL81,Tse90}. Their
results show that their numerical method always performs well compared with
the other algorithms. They implement their algorithms on a massively parallel
computer. This is not our intention. But since the algorithm seems to perform
well on parallel computers it makes sense to evaluate it on non-parallel
computers.

Let $f_j$, $j \in J$, be the inverse of $\phi^\prime_j$ such that
\begin{equation}
  \text{for } j \in J, \quad \phi_j(f_j(\mu)) = \mu, \quad \mu \in \mathbb{R}.
\end{equation}  
Similar to (\ref{xmu}) we conclude that
\begin{equation}
 x_j(\mu) = \max{\{l_j,\min{\{f_j(a_j\mu),u_j\}} \}} \label{min_x}.
\end{equation}
The heart of the algorithm is, like in the breakpoint algorithm, to find $\mu$
such that the primal constraint (\ref{problemb}) is fulfilled. In other words
find $\mu$ such that
\begin{equation}
\Psi(\mu) := b - \sum_{j \in J} a_jx_j(\mu) = 0.
\label{psi}
\end{equation}
Nielsen and Zenios \cite[Section 1.4]{NiZ92} define two functions $\Phi_j^+$
and $\Phi_j^-$:
\begin{equation}
  \Phi_j^+(\mu) := \left\{ 
  \begin{array}{l l l }
      \min{\{f_j(a_j\mu),u_j\}}, & \quad \mathrm{if }\;a_j>0,\\
     \max{\{l_j,f_j(a_j\mu)\}}, & \quad \mathrm{if }\; a_j<0,
  \end{array} \right.
  j \in J,
 \end{equation}
and
\begin{equation}
  \Phi_j^-(\mu) := \left\{ 
  \begin{array}{l l l }
      \min{\{f_j(a_j\mu),u_j\}}, & \quad \mathrm{if }\;a_j<0,\\
     \max{\{l_j,f_j(a_j\mu)\}}, & \quad \mathrm{if }\; a_j>0.
  \end{array} \right.
    j \in J,
 \end{equation}
Note that for $j \in J$, if $a_j > 0$ and $f_j$ is concave and increasing
then $\Phi^+_j$ is concave and if $a_j > 0$ and $f_j$ is convex and decreasing
then $\Phi^-_j$ is convex. Further, we define two sets of indices such that
$J^+ := \{ \, j \in J \mid a_j>0 \, \}$ and $J^- :=\{ \, j \in J \mid a_j<0 \,
\} $.  Define two approximation of $\Psi$ such that
\begin{subequations}
\begin{align}
	\Psi^+(\mu) & := b - \sum_{j \in J} a_j \Phi^+(\mu) \nonumber \\ & =
        b- \sum_{j\in J^+} a_j \min{\{g(a_j\mu),u_j\}} - \sum _{j\in J^-}
        \max{\{l_j,g(a_j\mu)\}},
\end{align}
\end{subequations}
and
\begin{subequations}\label{rox1}
\begin{align}
	\Psi^-(\mu) & := b - \sum_{j \in J} a_j \Phi^-(\mu) \nonumber \\ & = b-
        \sum_{j\in J^-} a_j \min{\{g(a_j\mu),u_j\}} - \sum _{j\in
          J^+} \max{\{l_j,g(a_j\mu)\}}.
\end{align}
\end{subequations}
Note that if $a_j>0$ and $f_j$ is concave then $\Psi^+$ is convex
and if $a_j>0$ and $f_j$ is convex then $\Psi^-$ is concave. Define the sub- and
superdifferentials of $\Psi^+$, respectively, $\Psi^-$, as
\begin{subequations}\label{rox2}
\begin{align}
	\partial \Psi^+(\mu) &:= \{ \, d \in \mathbb{R} \mid
        (\Psi^+(\mu^\prime ) - \Psi^+(\mu) \ge d(\mu^\prime - \mu) \quad
        \forall \mu^\prime \in \mathbb{R} \, \}, \\ \partial \Psi^-(\mu) &:=
        \{ \, d \in \mathbb{R} \mid (\Psi^-(\mu^\prime) - \Psi^-(\mu) \le
        d(\mu^\prime - \mu) \quad \forall \mu^\prime \in \mathbb{R} \, \}.
\end{align}
\end{subequations}
Further, define $\mu^*_\varepsilon$ and $\mathbf{x}_\varepsilon^*$ as the approximate
dual and primal solution such that $|\Psi(\mu^*_\varepsilon)| < \varepsilon$ where
$\varepsilon > 0$. The algorithm (NZ) follows (\cite[Linesearch 4]{NiZ92}):

\begin{tabular}{l}
\\ \textbf{Initialization: } 
Set $\varepsilon >0$, $k = 0$, $\mu^0 \in \mathbb{R}$. 
\\ 
\textbf{Iterative algorithm:} 
\\ 
\textbf{Step 1} (compute step size):
 \\
 $\quad$ If $\Psi(\mu^k) > \varepsilon$ then 
 \\ 
 $\qquad \Delta \mu ^{k+1} := -\frac{\Psi(\mu^k)}{d^k}$ where $d^k \in \partial
\Psi^+(\mu^k)$; go to Step 2, 
\\ 
$\quad$ else if $\Psi(\mu^k) < -\varepsilon$ then
\\ 
$\qquad \Delta \mu ^{k+1} := -\frac{\Psi(\mu^k)}{d^k}$ where $d^k
\in \partial \Psi^-(\mu^k)$; go to Step 2,
\\ 
$\quad$ else \\ $\qquad$ Set
$\mu^*_\varepsilon := \mu^k$ and determine $\mathbf{x}_\varepsilon^*$ from
(\ref{min_x}). Stop.
\\ 
\textbf{Step 2} (dual variable update):
 \\ 
 $\quad$ set
$\mu^{k+1} := \mu^k + \Delta \mu ^{k+1} $. 
\\ 
\textbf{Step 3: } Set $k := k +1$ and go to
Step 1. \\ \\
\end{tabular}

\noindent The algorithm converges to a value $\mu^*_\varepsilon$, such that
$|\Psi(\mu^*_\varepsilon)| < \varepsilon$ if the objective function components
$\phi_j$ is such that the corresponding function $\Psi^+(\mu)$ is convex or if
the corresponding function $\Psi^-(\mu)$ is concave \cite[Proposition
  8]{NiZ92}. For some problems, the inverse of the derivative might however
result in imaginary values. One solution to this problem is to consider the
equivalent maximization problem of (\ref{problemeq}), i.e., to $\maximize_x
-\phi(x)$.

\begin{remark}
In practice we will choose $\varepsilon > 0$ and the algorithm will in most
cases stop such that $| \Psi(\mu^k) | > 0$. Hence we will end up with an
approximate solution of the optimal dual variable $\mu^*$.  The map from the
dual space to the primal might not be linear. Hence the primal error might be
larger than we expect, i.e., $|x _\varepsilon^* - x^*| >> | \mu _\varepsilon^*
- \mu^* | $. However, there are methods for generating primal optimal
solutions from any Lagrangian dual vector (see for example
\cite{LMOP07}). Another plausible method to find the optimal solution from the
approximate solution $\mu_{\varepsilon}^*$ is to use a breakpoint or a
relaxation algorithm, starting from $\mu_{\varepsilon}^*$.
\end{remark}

\section{Method of algorithm evaluation}\label{sec:method}

This section serves to provide an overview of the procedure for the numerical
study.  In Section \ref{sec:limitations} we define problem instances for the
numerical study.  Some theory on how the problem instances can be designed
follows in Section \ref{sec:dop}. In Section \ref{sec:performance} we give a
brief overview of performance profiles (\cite{DoM02}) which are used for the
evaluation of the numerical study. Finally, in Section \ref{sec:plc}, we
describe the computational environment.

\subsection{Problem set}\label{sec:limitations}

For the numerical study we consider five common special cases of problem
(\ref{problemeq}); it also covers the inequality problem (\ref{problem}) when
$\mu^* > 0$. Only finite values of the lower an upper bounds
(\ref{problemeqc}) are considered, i.e., for $j \in J$, $l_j > -\infty$ and
$u_j < \infty$. The five problem cases are briefly specified next:

\paragraph{Quadratic problem}
The convex separable quadratic problem is the special case of
(\ref{problemeq}), where
\begin{equation}\label{quadratic}
\phi _j ( x_j) = \frac{w_j}{2}x^2_j - c_jx_j \; \text{ for } j \in J,
\end{equation}
where $w_j, c_j > 0$, $j \in J$. Numerical studies of algorithms for problem
(\ref{quadratic}) are widely explored; e.g., see
\cite{NiZ92,Kiw07,Kiw08a,Kiw08b}.  In our numerical study the parameters are
randomized such that $a_j \in [1,30]$, $w_j \in [1,20]$, $c_j \in [1,25]$,
$l_j \in [0,3]$ and $u_j \in (3,11]$.


\paragraph{Stratified sampling}

If we have a large population and would like to perform a statistical research
among the population, it is practically infeasible to examine every single
individual in the population. Instead we can stratify the population into $n$
strata. An example of a stratum might be people of a certain age. Let $M$ be
the number of individuals in the entire population and $M_j$ the number of
individuals in strata $j$. If we want to minimize the variance of the entire
population we need to allocate the number of samples $x_j$ from each strata
from:
\begin{equation} \label{stratified}
\phi_j(x_j) = \omega_j \frac{(M-x_j)\rho ^2}{(M-1)x_j} \; \text{ for } j \in J,
\end{equation}
where $\omega_j = M_j/M$ and $\rho_j$ is an estimate of the variance for
strata $j$. From $b$ in the resource constraint we specify the total sample
size. In our numerical study the parameters are
randomized such that $a_j \in [1,30]$, $m_j \in [5,30]$, $c_j \in [1,4]$, $l_j
\in [1,3]$ and $u_j \in (3,15]$.


\paragraph{Sampling}
According to \cite{BRS99}, often the objective terms for sampling problems can
be written as
\begin{equation}\label{production}
\phi_j (x_j) =  c_j/x_j \; \text{ for } j \in J.    
\end{equation}
In our numerical study the parameters are randomized such that $a_j \in
[1,4]$, $c_j \in [5,30]$, $l_j \in [0,3]$ and $u_j \in (3,6]$.

\paragraph{The theory of search}
In the theory of search problem it is determined how a resource $b$ of time
should be spent to find an object among $n$ subdivisions of an area with the
largest probability. It is assumed that we know the probability $m_j$ for an
object to be found in subdivision $j$. The objective component $\phi_j$
describes the probability of finding the object in subdivision $j$ and takes
the form:
\begin{equation}\label{TEO}
\phi_j (x_j) = m_j(e^{-b_jx_j}-1) \; \text{ for } j \in J.
\end{equation}
This problem is widely explored by Koopman \cite{Koo53,Koo99} and the problem
is possible to apply to a large variation of search problems, e.g., searching
for refugees fleeing from Cuba \cite{Sto81}. In our numerical study the
parameters are randomized in the following intervals: $m_j \in [0.5, -8]$,
$b_j \in [0.1, 3]$, $a_j \in [1, 3]$, $l_j \in [0, 0.1]$ and $u_j \in (0.1,
5]$.

\paragraph{Negative entropy function}
The negative entropy function mentioned in \cite{NiZ92}:
\begin{equation} \label{negentropy}
\phi_j(x_j) = x_j\log{(\frac{x_j}{a_j} - 1)} \; \text{ for } j \in J.
\end{equation}
In our numerical study the parameters are
randomized such that $c_j \in [50,250]$, $l_j \in [20,100]$ and $u_j \in
(30,210]$.

\subsection{Design of problem instances} \label{sec:dop}
Similarly to the numerical study in \cite{KoL98}, we divide our set of test
problem instances into groups containing different portions of the activities
within the lower and upper bounds at the optimal solution. Let $H := \{ \, j
\in J \mid l_j < x^*_j < u_j \, \}$. Then, the percentage is determined as
$|H| / n$. To motivate this approach, we refer to the variance of CPU times for
different portions of active activities in \cite{KoL98}.  Further, similar to
the numerical study in \cite{Kiw07}, we consider different problem sizes $n$
since the theoretical CPU time for different algorithms vary between $O(n)$
and $O(n^2)$.

For the problem set, we pseudo randomize the parameters $l_j$, $u_j$ and all
the parameters associated with $\phi_j$ and $g_j$. In the numerical study we
use a linear resource constraint such that $g_j(x_j) = a_jx_j$, where $a_j>0$
for all $j \in J$. This simplifies the design of the problem set, since
$\phi_j$ is convex and $l_j < u_j$ for all $j \in J$. We have that
\begin{equation}\label{designprob2}
   x_j^* = \left\{ 
  \begin{array}{l l l }
    x_j^*, & \quad \text{if } \mu^* = -\phi_j^\prime(x_j^\prime)/ a_j, \\ l_j,
    & \quad \text{if } \mu^* \ge -\phi_j^\prime(l_j) / a_j \ge
    -\phi_j^\prime(u_j) / a_j, \\ u_j, & \quad \text{if } \mu^* \le
    -\phi_j^\prime(u_j) / a_j ) \le -\phi_j^\prime(l_j) / a_j.
  \end{array} \right.
\end{equation}

\noindent By using the properties of (\ref{designprob2}) we can determine $H$ 
such that $|H| / n = y$ for any $y \in [0,1] $.

 

\subsection{Performance profiles} \label{sec:performance}

Dolan and Mor\'{e} \cite{DoM02} propose a performance profile for the
evaluation of optimization software. The method is briefly summarized as
follows: Assume that we have a set $A$ of algorithms that consist of $n_{a}$
algorithms and a problem set $P$ that consists of $n_p$ problem instances. Let
$t_{p,a}$ denote the time it takes for algorithm $a\in A$ to solve problem
$p\in P$. A performance ratio $r_{p,a}$ is introduced:
\begin{equation}
  r_{p,a}(t_{p,a}) := \frac{t_{p,a}}{\min \{t_{p,l} \mid l\in A \}}.
\end{equation}
For a problem $p$, the performance ratio is a measure of how fast algorithm
$a$ is relative to the fastest algorithm solving problem $p$. Fix a constant
$r_M$ such that $r_M \ge r_{p,a}$ for all $p\in P$, $a \in A$
and let $r_{p,a} = r_M$ if algorithm $a$ fails to solve problem $p$. Further,
we introduce the distribution
\begin{equation}
  \rho_a (\tau) = \frac{1}{n_p} | \{p\in P \mid r_{p,a} \le \tau \} |,
\end{equation}
for each algorithm, where $|\cdot|$ denote the cardinality of a set and $\tau
\in [1, r_M]$. For algorithm $a$, the distribution $\rho_a$ describes the
percentage of problem instances that are solved at least as fast as $\tau$
times the fastest algorithm for problem $p$. Note that $\rho_a(1)$ is the
percentage for algorithm $a$ being the fastest. Moreover, $\lim_{\tau
  \rightarrow r_M} \rho_s(\tau)$ is the probability that algorithm $a$ will
solve a problem $p$ in $P$. If we have a large problem set $P$ then $\rho_a
(\tau )$ will not be affected much by a small change in $P$ (\cite[Theorem
  1]{DoM02}).

\subsection{Program language, computer and code}\label{sec:plc}
The algorithms are implemented in Fortran 95, compiled with {\em gfortran}
under mac OS X 10.8.2 (2.5 GHz Intel Core i5, 4 GB 1600 MHz DDR3).

\section{Computational experiments} \label{sec:compexp}

In this section we present the results from numerical experiments of the
problems defined in Section \ref{sec:limitations}.  If nothing else is
mentioned, 100 problem instances of each problem is evaluated for each problem
size. Further, the problem instances are designed such that different values
of $|H| / n$ are considered, see Section \ref{sec:dop}. 

The development of numerical experiments for the problem (1) is illustrated in
Table \ref{decade}, where we cite the size of the largest test problem
reported during each decade; the table is an extension of Table 2 in
\cite{Pat08}.

\begin{table}[h!]\
\centering
\caption{Largest problem instances solved for each algorithm class through the
  decades.}
\begin{tabular}{@{} l |  r  r  r  r  r  r r @{}} \label{decade}
Decade & 50s & 60s & 70s & 80s & 90s & 00s & 10s \\ [1ex] \hline Breakpoint
algorithm & 2 & 60 & 12 & 200 & $10^4$ & $2 \cdot 10^6$ & $30 \cdot 10^6$
\\[1ex] Relaxation algorithm & -- & -- & 200 & 200 & $10^4$ & $2
\cdot 10^6$ & $110 \cdot 10^6$ \\[1ex]
\end{tabular}
\end{table}
 
In Section \ref{sec:Bhprim} we present performance profiles for the relaxation
algorithms defined in Section \ref{sec:relaxation}.  In Section
\ref{sec:resrank} we evaluate the pegging process for both the breakpoint and
the relaxation algorithm defined in Section \ref{sec:rank}.  In Section
\ref{sec:final}, the best performing relaxation algorithm and breakpoint
algorithm are compared with the quasi-Newton algorithm in Section
\ref{sec:zen_algor}. In Section \ref{sec:acr} we give a critical review of the
numerical study. Finally, in Section \ref{sec:conclusion}, we make some
overall conclusions.

In the following Figures~\ref{bhvsdualbh}--\ref{finall} we show performance
profiles, as defined in Section~\ref{sec:performance}, for three competing
sets of algorithms. To summarize the appearance of these plots, for each plot
and corresponding competing set of algorithms, the graph for one algorithm
shows the portion (between $0$ and $1$ on the y-axis) of the problem instances
considered that are solved within $\tau$ (on the x-axis) times the fastest
algorithm in the test for problem $p$. In particular, the value of $\rho_a(1)$
is the portion of the problems in which algorithm $a$ is the fastest, and
$\lim_{\tau \rightarrow r_M} \rho_a(\tau)$ is the probability that algorithm
$a$ will solve a problem $p$ in $P$. (The latter information is particularly
illustrative and relevant for Figure~\ref{finall}.)

\subsection{Evaluation of the relaxation algorithm}\label{sec:Bhprim}

We evaluate the different paths in Figure \ref{relaxationOverview}.  First,
recall that PIR2 determines the primal variables while DIR2 determines the
dual variable. As can be seen in the leftmost performance profile in
Figure~\ref{bhvsdualbh}, DIR2 is the fastest in $67.8\%$ of the problem
instances solved. The result is what we can expect from theory, since DIR2
does not need as many operations in Step 1; see Sections \ref{sec:bih} and
\ref{sec:mod2}.  PIR2 is faster in $32.2\%$ of the problems solved; the latter
cases stem mainly from to the negative entropy problem (\ref{negentropy}) but
also from the theory of search problem (\ref{TEO}) and the stratified sampling
problem (\ref{stratified}) when $n$ and $| H | /n$ is small.  The reason for
PIR2 to be faster in almost all cases for the negative entropy problem is due
to the computationally simple expression of the primal variables $x_j^k(\mu^k)
= \frac{a_j}{\sum_{j\in J^k} c_j}$ while the dual variable is evaluated from
$\mu^k = \log{ \sum_{j \in J^k} c_j } - \log{b}$. Also the computations of the
breakpoints might be a larger part of the total time when the equations for
the primal/dual variables are simple.

\begin{figure}[h!]
  \centering
  \includegraphics[width=3.12in]{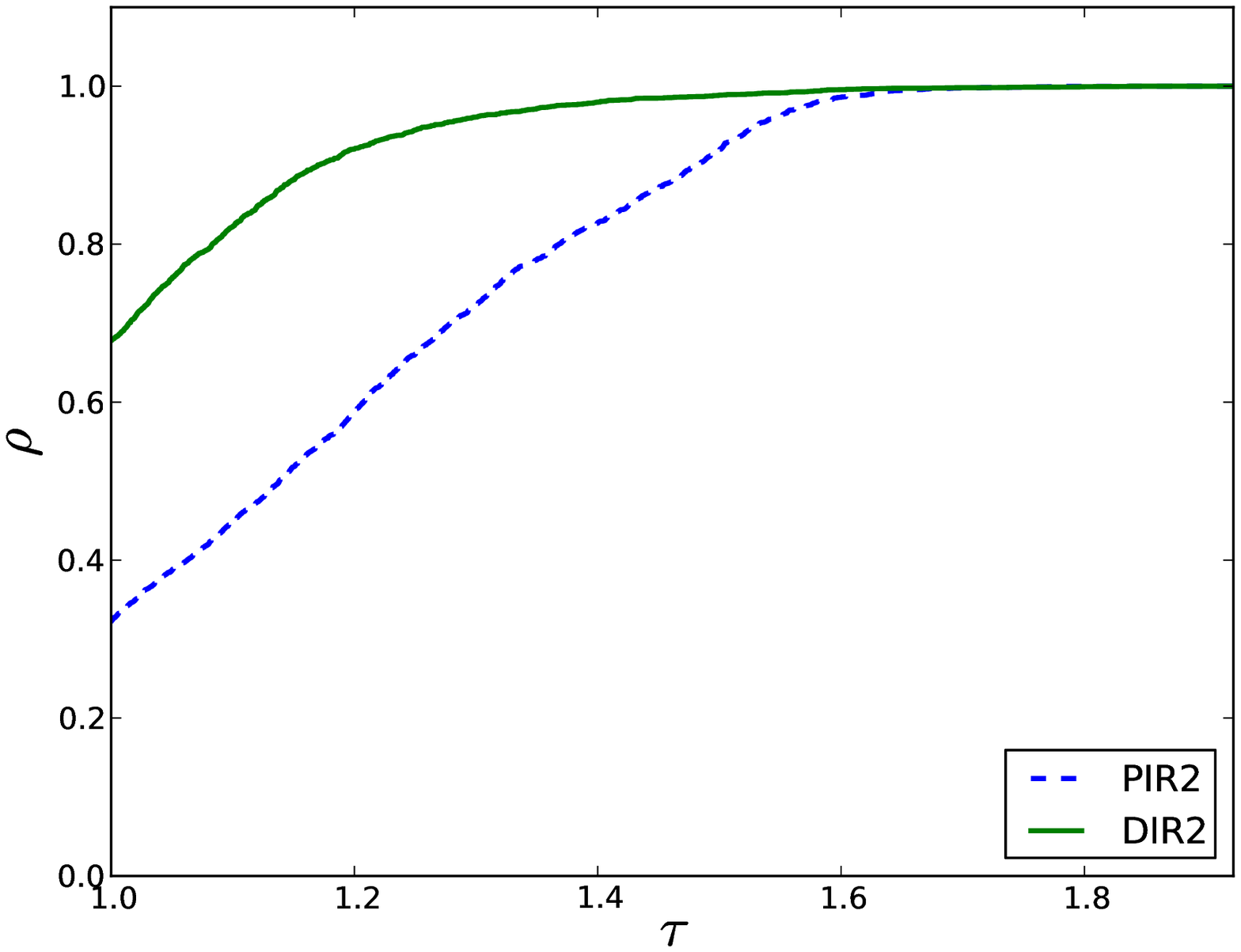}
  \includegraphics[width=3.12in]{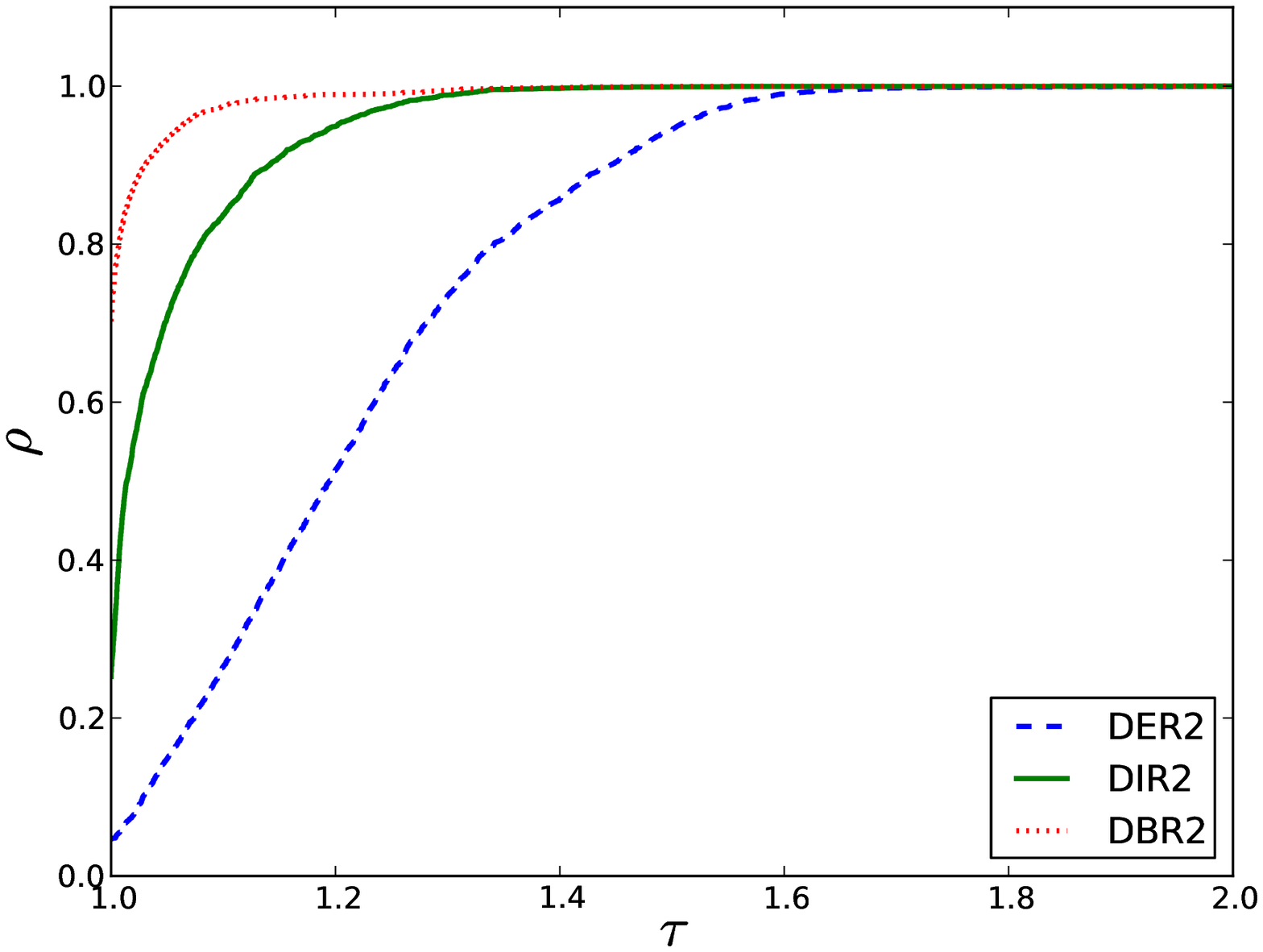}
  \caption{The leftmost figure shows the performance profiles for PIR2 and
    DIR2 and the rightmost for DER2, DIR2 and DBR2. The numerical experiment
    was done according to the description in Section \ref{sec:dop} for $n = 50
    \cdot 10^3; 100 \cdot 10^3; 200 \cdot 10^3; 500 \cdot 10^3; 10^6; 2 \cdot
    10^6$ and for the problems in Section \ref{sec:limitations} . The
    algorithms were implemented uniformly.}
\label{bhvsdualbh}
\end{figure}

We recall that DER2 applies explicit evaluation, DIR2 applies implicit
evaluation and DBR2 applies the theoretically most profitable evaluation.  The
results are in favour of DBR2 and DIR2; see the rightmost performance profile
in Figure \ref{bhvsdualbh}. DBR2 is fastest in $70.6\%$, DIR2 is fastest in
$25.9\%$ and DER2 is fastest in $4.7\%$ of the problems solved. (Note that
$70.6\% + 25.9\% + 4.7\% = 100.2 \%$; in some cases two algorithms are equally
fast.)

Figure \ref{bhvsdualbh} also shows that the performance of DIR2 and DBR2 are
quite similar. For just a few cases DIR2 is more than 1.30 times slower than
the fastest algorithm while for as few cases DBR2 is only 1.2 times slower
than the fastest algorithm. On average DBR2 performs somwhat better than DIR2.

\paragraph{Conclusion} 
For the problem set considered, it is more profitable to evaluate the dual
variable even if we have to compute all the breakpoints at the beginning of
the algorithm. Hence for the problem set considered DIR2 seems to outperform
PIR2.

For the problem set considered, the results show that in most cases it is more
profitable to evaluate a solution $\mathbf{x}^k$ implicitly.  The small
difference between the performance of DBR2 and DIR2 implies that in most cases
$|J^k|>2|L(\mu^k) \cup U(\mu^k)|$. We conclude that DBR2 performs slightly
better than DIR2 which agrees with the theory in Section \ref{sec:bh4}.

\subsection{Evaluation of the pegging process}\label{sec:resrank}

We compare the three pegging approaches for the breakpoint and relaxation
algorithms described in Sections \ref{sec:medians}--\ref{sec:MB5} and
\ref{sec:tandf}, respectively. For the breakpoint algorithm median search is
implemented similarly to \cite[Section 8.5]{PTVF92}.

The performance profiles in Figure \ref{rmrmm} show that 5-sets pegging is
nearly always the fastest (MB5 in $95\%$ and DBR5 in $94\%$ of the problem
instances solved).  Additionally, when 5-sets pegging is not fastest it is
never more than $10 \%$ slower than the fastest algorithm.  It is obvious that
for the few cases when 2-sets pegging performs best are when few optimal
primal variables equals the lower or upper bounds ($| H | /n$ is small). This
follows from the fact that 2-sets pegging does not check if the variables
belong to the additional sets that are used in 3- and 5-sets pegging.

\begin{figure}[h!]
  \centering
  \includegraphics[width=3.12in]{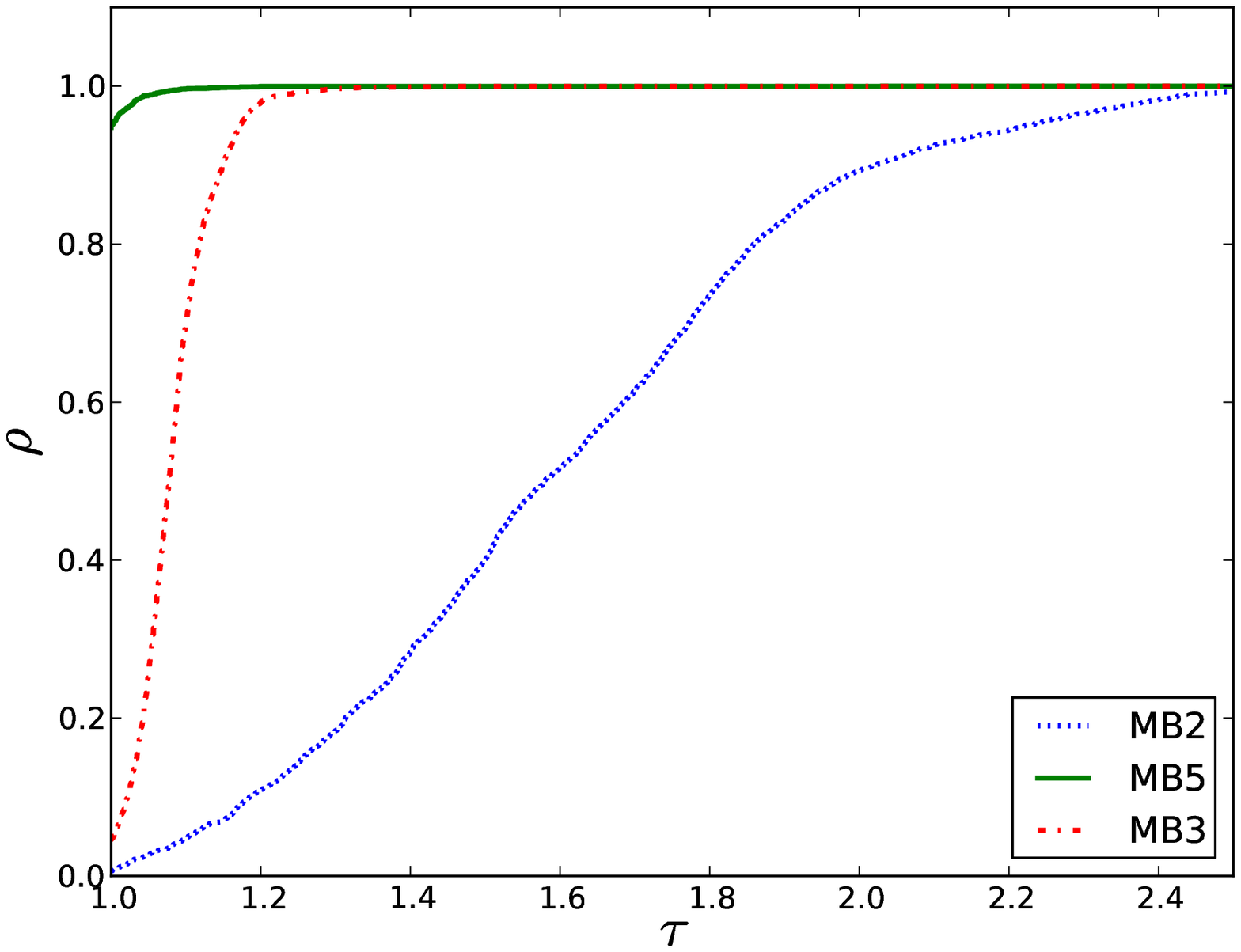}
   \includegraphics[width=3.12in]{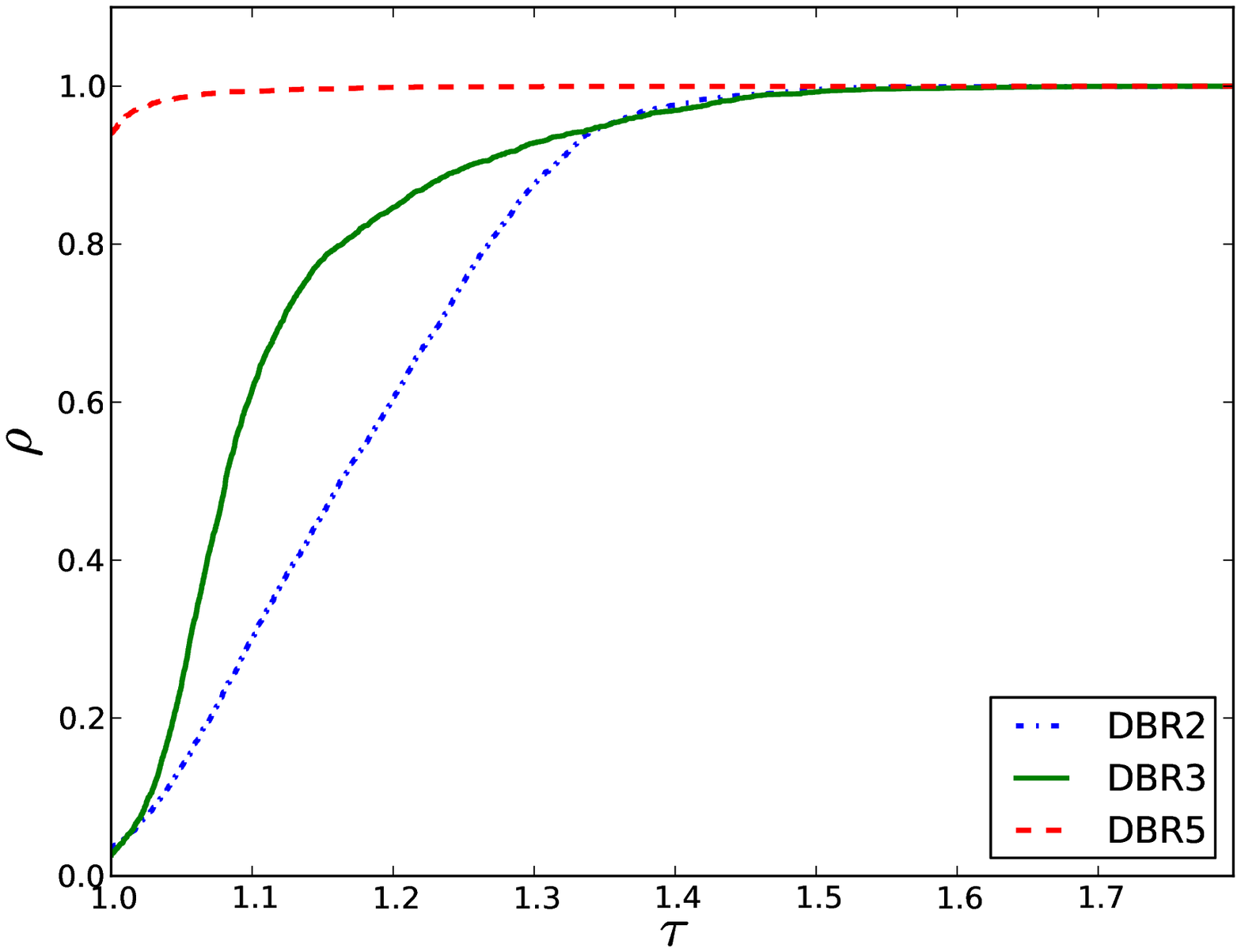}
  \caption{The leftmost figure shows the performance profile for MB2, MB3 and
    MB5 and the rightmost for DBR2, DBR3 and DBR5. The numerical experiment
    was done according to the description in Section \ref{sec:dop} for $n = 50
    \cdot 10^3; 100 \cdot 10^3; 200 \cdot 10^3; 500 \cdot 10^3; 10^6; 2 \cdot
    10^6$ and for the problems in Section \ref{sec:limitations}. The
    algorithms were implemented uniformly. }\label{rmrmm}
\end{figure}

\paragraph{Conclusion} In general 5-sets pegging is the most profitable.  

\begin{remark}
Numerical experiments for a breakpoint algorithm applying bisectional search
of a sorted sequence of breakpoints, using quicksort as in \cite[Section
  5.2.2]{Knu98}, was also performed. The algorithm was on average 1.5 times
slower than MB2. Also the algorithm in \cite{Zip80}, where the upper bounds
are relaxed, was evaluated. The algorithm performed poorly, on average 2.9
times as slow as the breakpoint algorithm applying sorting.
\end{remark}


\subsection{A comparison between relaxation, breakpoint, and quasi-Newton 
methods}\label{sec:final}

In this section we compare the best performing relaxation algorithm (DBR5),
the best performing breakpoint algorithm (MB5), and the numerical quasi-Newton
method (NZ) in Section \ref{sec:zen_algor}. For NZ we use the stopping
criterion $| \frac{\sum_{j\in J} a_jx_j}{b} - 1 | < 0.01$ which is weaker than
the stopping criterion in \cite{NiZ92} ($<10^{-4}$).  This will of course give
us an approximate optimal solution only. The initial value of the dual
variable $\mu$ is set to the mean of the breakpoints $\mu^0 = ( \sum_{j\in J}
\phi_j(l_j) / a_j + \sum_{j\in J} \phi_j(u_j) / a_j) / (2n)$. If the algorithm
does not converge within 100 CPU seconds, we start over with the mean of the
breakpoints corresponding to the lower breakpoints $\mu^0 = ( \sum_{j\in J}
\phi_j(l_j) / a_j) / n$. Similarly, if the algorithm does not converge within
100 CPU seconds then we start over with the mean of the breakpoints
corresponding to the upper breakpoints: $\mu^0 = ( \sum_{j\in J} \phi_j(u_j) /
a_j) / n$. If the algorithm does not terminate within $300$ CPU seconds then
we terminate the algorithm and consider the problem as unsolved.

The reader may have noticed that the breakpoint algorithms in Sections
\ref{sec:medians}--\ref{sec:MB5} are given without the use of the sets for the
pegging process ($L$, $U$, $M$, $L_-$, $U_+$) while the relaxation algorithms
in Section \ref{sec:bih}--\ref{sec:tandf} are given with the use of these
sets. Of course the notation using these sets, as for the relaxation
algorithms, are theoretically more elegant. However, our implementations of
the algorithms in Fortran 95 showed that for breakpoint algorithms it was in
general more profitable to not use the pegging sets.

Figure \ref{finall} shows the performance profile for the algorithms. In
general, we can see a significant advantage of using DBR5 since it is fastest
for $99.0 \%$ of the problem instances solved. Moreover, when DBR5 is not the
fastest algorithm it is almost as fast as the fastest. From the figure we also
see that MB5 is more than 2.7 times slower than the fastest algorithm in $50
\%$ of the problem instances solved.

\begin{figure}[h!]
  \centering
  \includegraphics[width=3.1in]{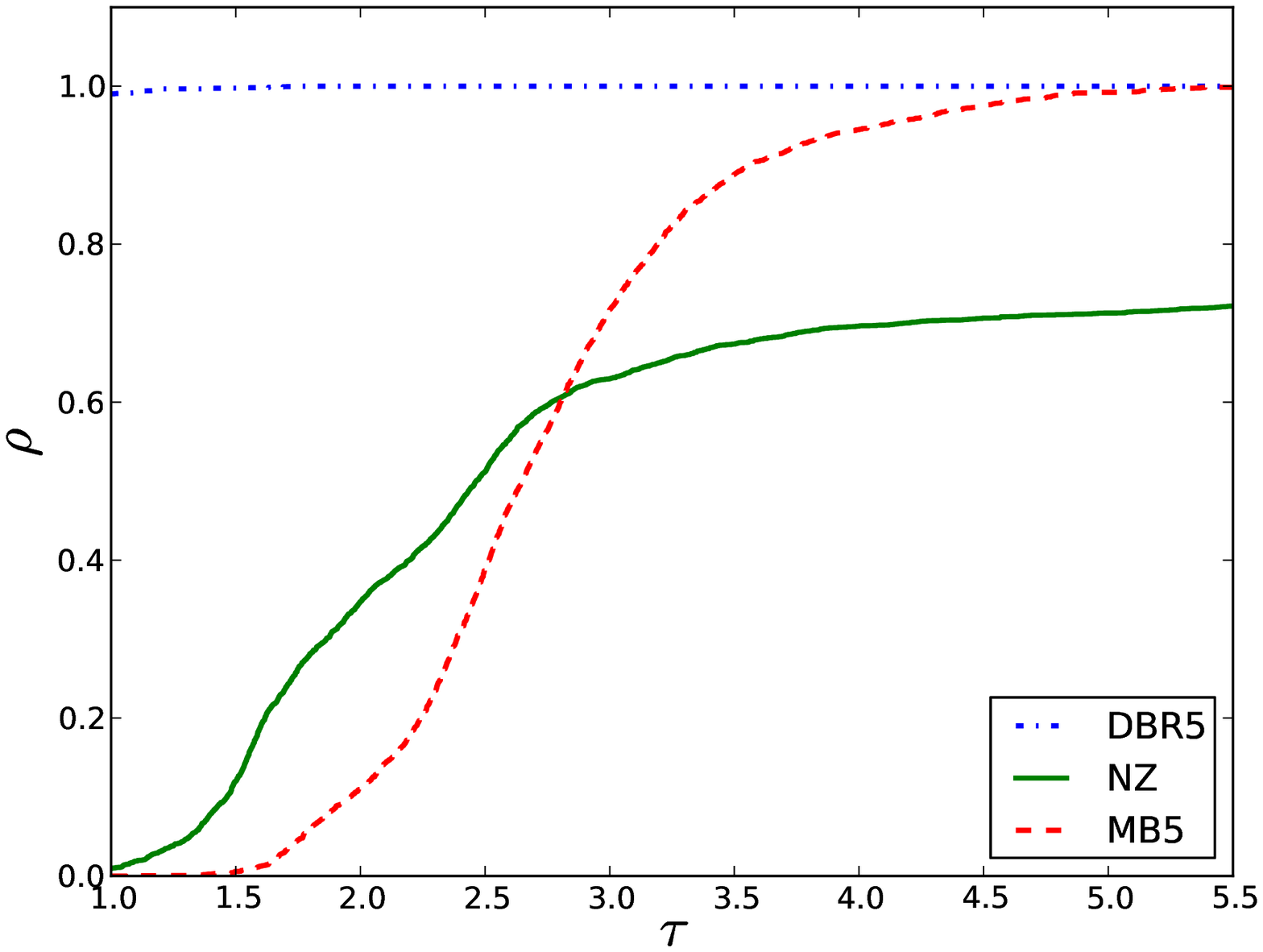}
  \includegraphics[width=3.1in]{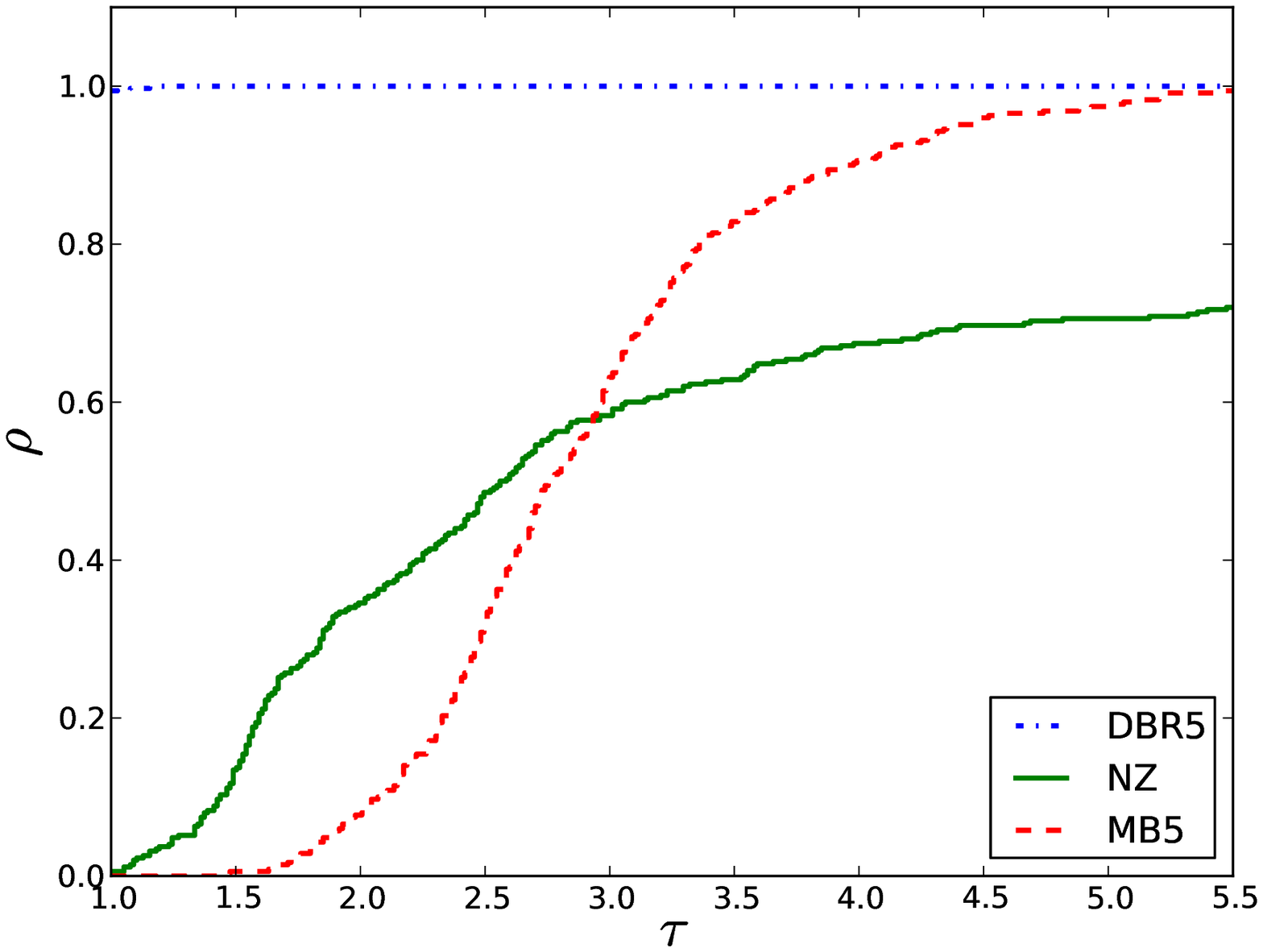}
  \caption{Performance profile for DBR5, MB5 and NZ. (Left) The numerical
    experiment was done according to the description in Section \ref{sec:dop}
    for $n = 50 \cdot 10^3$, $100 \cdot 10^3$, $200 \cdot 10^3$, $500 \cdot
    10^3$, $10^6$, and $2 \cdot 10^6$ for the problems in
    Section~\ref{sec:limitations}. (Right) The numerical experiment was done
    according to the description in Section \ref{sec:dop} but with 10 problem
    instances and for $n = 4 \cdot 10^6$, $6 \cdot 10^6$, $8 \cdot 10^6$, $10
    \cdot 10^6$, $15 \cdot 10^6$, $20 \cdot 10^6$, $25 \cdot 10^6$, and $30
    \cdot 10^6$ for the problems in
    Section~\ref{sec:limitations}. }\label{finall}
\end{figure}

The Newton method terminated the fastest for only $1 \%$ of the test problems,
which probably is due to good initial values. Also we should have in mind that
NZ terminates with an approximate solution only. The Newton method performs
relatively well for the quadratic problem (\ref{quadratic}), the theory of
search problem (\ref{TEO}), and the negative entropy problem
(\ref{negentropy}) when $|H|/n>0.8$. It is not very successful for the
stratified sampling problem (\ref{stratified}) and the sampling problem
(\ref{production}). We can see from the performance profile that in $28\%$ of
the problem instances tested, NZ is more than 5.5 times slower than DBR5 (the
fastest algorithm). In $5.3 \%$ $(158/3000)$ of the problem instances the
algorithm does not solve the problem; these problem instances mainly stem from
the stratified sampling problem (\ref{stratified}) and the sampling problem
(\ref{production}) when $|H|/n< 0.3$.

By comparing the performance profiles in Figure \ref{finall}, we can see that
they are very similar. In other words, relative to each other, the performance
of the algorithms seems to be similar for different problem sizes $n$.

\begin{figure}[h!]
  \centering \includegraphics[width=3.5in]{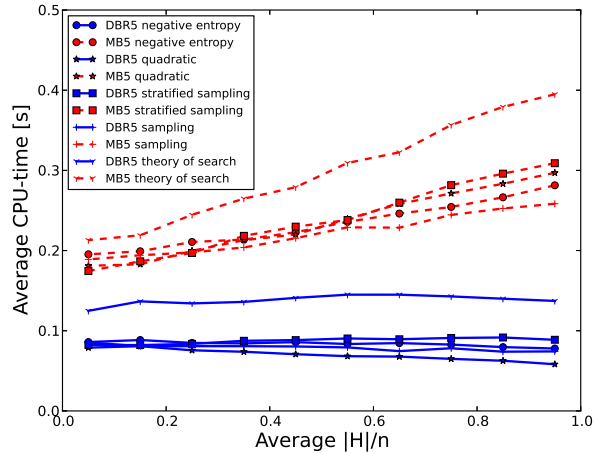}
  \caption{The average CPU time for $n=10^6$ is plotted as a function of the
    average portion of the optimal variables that obtain a value between the
    lower and upper bounds. }\label{htgraf}
\end{figure}

Considering the breakpoint algorithm MB5, Figure \ref{htgraf} shows that the
CPU time increases when the number of optimal primal variables strictly within
the bounds $|H|/n$ grows for the problems in the problem set. Concerning the
relaxation algorithm DBR5 the result shows no significant dependence of
$|H|/n$, neither does NZ.

\begin{figure}[h!]
  \centering
  \includegraphics[width=3.1in]{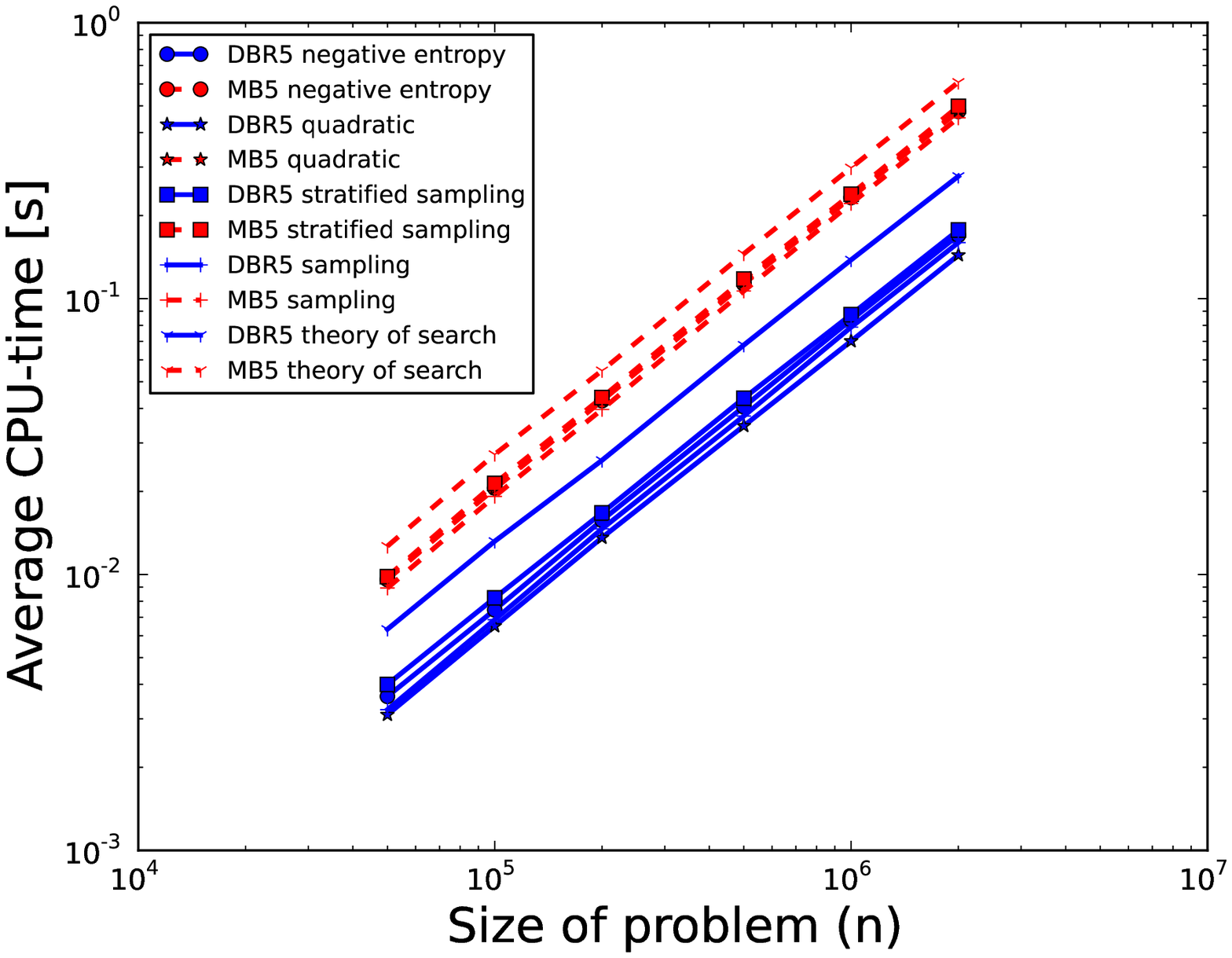}
  \includegraphics[width=3.1in]{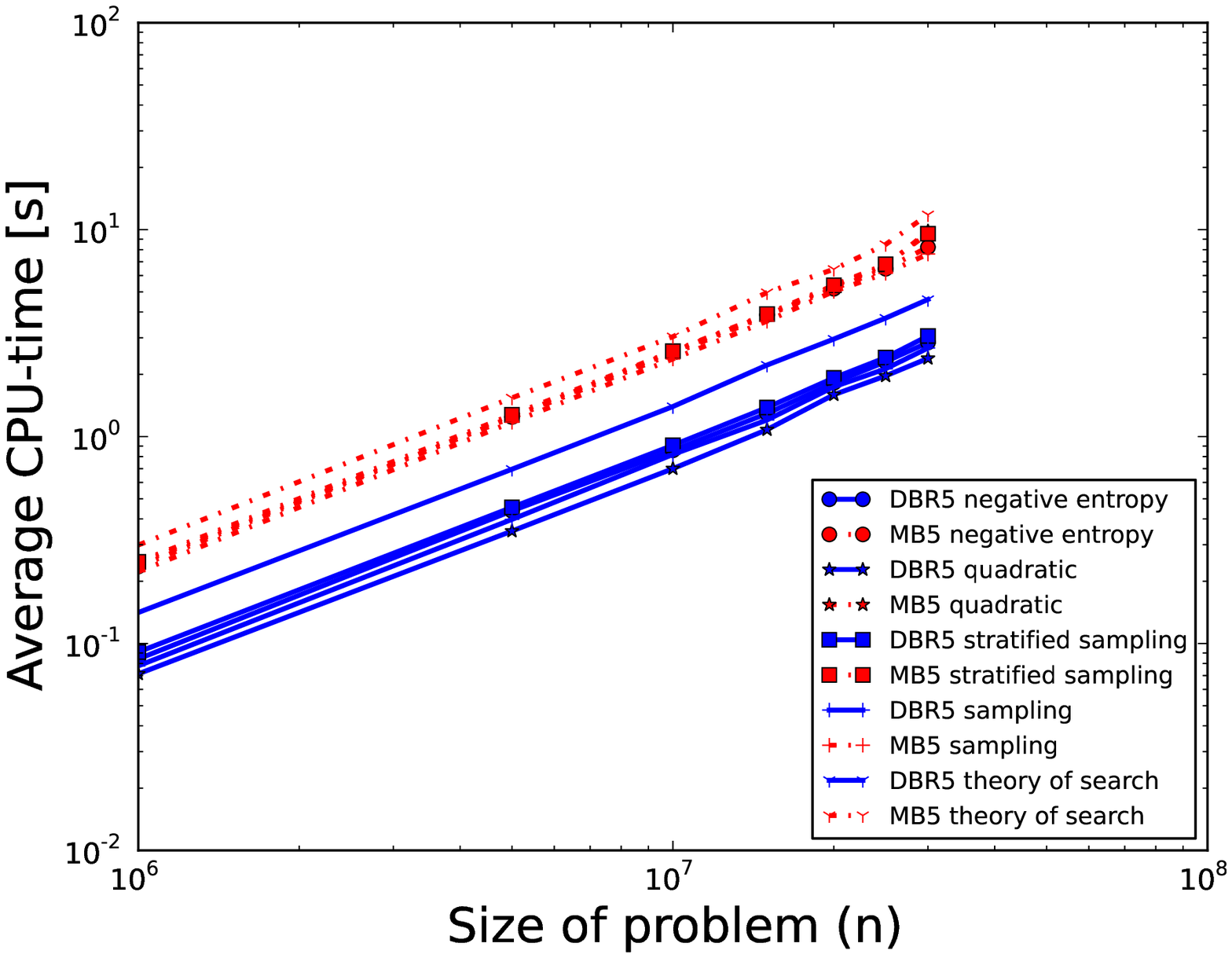}
  \caption{The average CPU time is plotted as a function of the size of the
    problem $n$ for DBR5 and MB5. The values in the left figure are the mean
    of 100 problem instances solved and the values in the right figure are the
    mean of 10 problem instances solved.}\label{naverage}
\end{figure}

In Figure \ref{naverage}, we can see that the CPU time for MB5 and DBR5 is
linear for $n \in [50 \cdot 10^3, 2 \cdot 10^6]$ respectively $n \in [50 \cdot
  10^3, 30 \cdot 10^6]$. This is impressive, since DBR5 has a worst-case time
complexity of $O(n^2)$. For problem sizes a bit larger than $30 \cdot 10^6$
variables the CPU time tends to increase faster.  However, additional
computations on a more powerful computer have shown that, for the relaxation
algorithm, the quadratic problem has a linear time complexity up to $n= 90
\cdot 10^6$, the sampling problem up to $n = 100 \cdot 10^6$, the theory of
search problem up to $n = 70 \cdot 10^6$ and the negative entropy problem up
to $n = 110 \cdot 10^6$. Hence we would also like to stress that the linearity
probably is constrained by the memory of the computer rather than the algebra
in the algorithm.

Tables \ref{tqua}--\ref{tneg} show the mean CPU time in seconds, for solving
the problems in the problem set for DBR5, NZ and MB5. The tables show the
great advantage of using DBR5.

\begin{table}[h!] 
\centering
\caption{The average CPU times algorithms MB5, DBR5 and NZ for solving the
  quadratic problem (\ref{quadratic}). Each value is the mean of 100
  randomized computations and the CPU times are given in seconds.}
\begin{tabular}{@{} c  c  c  c  c  c  c @{} } \label{tqua}
$n$ & 50,000 & 100,000 & 200,000 & 500,000 & 1,000,000 &
  2,000,000\\ [1ex] \hline DBR5 & 0.0031 & 0.0065 & 0.0136 & 0.0346 & 0.0702 &
  0.1438 \\[1ex] NZ & 0.0083 & 0.0179 & 0.0378 & 0.1018 & 0.1923 & 0.3822
  \\[1ex] MB5 & 0.0097 & 0.0205 & 0.0428 & 0.1144 & 0.2346 & 0.4807 \\[1ex]
  \hline
\end{tabular}
\end{table}

\begin{table}[h!]\
\centering
\caption{The average CPU times algorithms MB5, DBR5 and NZ for solving the
  stratified sampling problem (\ref{stratified}). Each value is the mean of
  100 randomized computations and the CPU times are given in seconds. For NZ
  only the cases when NZ found the optimal solution have been considered.}
\begin{tabular}{@{} c  c  c  c  c  c  c @{}} \label{tstr}
$n$ & 50,000 & 100,000 & 200,000 & 500,000 & 1,000,000 &
  2,000,000\\ [1ex] \hline DBR5 & 0.0040 & 0.0082 & 0.0167 & 0.0436 & 0.0875 &
  0.1773 \\[1ex] NZ & 0.0379 & 0.0952 & 0.1529 & 0.3560 & 0.9026 & 1.8299
  \\[1ex] MB5 & 0.0098 & 0.0214 & 0.0438 & 0.1178 & 0.2390 & 0.4981 \\[1ex]
  \hline
\end{tabular}
\end{table}

\begin{table}[h!]
\centering
\caption{The average CPU times algorithms MB, DBR5 and NZ for solving the
  sampling problem (\ref{production}). Each value is the mean of 100
  randomized computations and the CPU times are given in seconds.  For NZ only
  the cases when NZ found the optimal solution have been considered.}
\begin{tabular}{@{} c  c  c  c  c  c  c @{}} \label{tpro}
$n$ & 50,000 & 100,000 & 200,000 & 500,000 & 1,000,000 &
  2,000,000\\ [1ex] \hline DBR5 & 0.0032 & 0.0069 & 0.0146 & 0.0375 & 0.0788 &
  0.1597 \\[1ex] NZ & 0.1069 & 0.2403 & 0.5670 & 1.4319 & 3.8036 & 6.3949
  \\[1ex] MB5 & 0.0089 & 0.0192 & 0.0397 & 0.1066 & 0.2213 & 0.4522 \\[1ex]
  \hline
\end{tabular}
\end{table}

 \begin{table}[h!]
\centering
\caption{The average CPU times algorithms MB5, DBR5 and NZ for solving the
  theory of search problem (\ref{TEO}). Each value is the mean of 100
  randomized computations and the CPU times are given in seconds.}
\begin{tabular}{@{} c  c  c  c  c  c  c @{}} \label{ttos}
$n$ & 50,000 & 100,000 & 200,000 & 500,000 & 1,000,000 &
  2,000,000\\ [1ex] \hline DBR5 & 0.0063 & 0.0132 & 0.0259 & 0.0679 & 0.1382 &
  0.2778 \\[1ex] NZ & 0.0187 & 0.0434 & 0.0940 & 0.2669 & 0.4511 & 0.9258
  \\[1ex] MB5 & 0.0127 & 0.0273 & 0.0549 & 0.1454 & 0.2983 & 0.6090 \\[1ex]
  \hline
\end{tabular}
\end{table}

\begin{table}[h!]
\centering
\caption{The average CPU times algorithms MB5, DBR5 and NZ for solving the
  negative entropy problem (\ref{negentropy}). Each value is the mean of 100
  randomized computations and the CPU times are given in seconds.}
\begin{tabular}{@{} c  c  c  c  c  c  c @{} } \label{tneg}
$n$ & 50,000 & 100,000 & 200,000 & 500,000 & 1,000,000 & 2,000,000\\ [1ex]
  \hline DBR5 & 0.0036 & 0.0074 & 0.0157 & 0.0406 & 0.0837 & 0.1698 \\[1ex] NZ
  & 0.0071 & 0.0171 & 0.0310 & 0.0819 & 0.1709 & 0.3191 \\[1ex] MB5 & 0.0096 &
  0.0206 & 0.0427 & 0.1131 & 0.2326 & 0.4774 \\[1ex] \hline
\end{tabular}
\end{table}

\paragraph{Summary} 
Even if NZ only finds an approximate solution and even if the stop criterion
is weak, NZ is outperformed by DBR5. Hence, we see no reason to evaluate
methods for finding the exact optimal solution $\mathbf{x}^*$ from the
approximate solution $\mathbf{x}^*_\varepsilon$ generated by NZ. In many
cases, NZ fails to solve the stratified sampling problem (\ref{stratified})
and the sampling problem (\ref{production}). This is probably due to the
stiffness of the problems.

MB5 performs very well for problems where $|H|/n$ is small. This is due to the
pegging process: since we peg a large part of $x_j^k \in J^k$ in each
iteration, the problem is reduced in the next iteration. This follows from the
reduction of breakpoints by half in each iteration; note that this does not
hold for the relaxation algorithm. This might be the reason for MB5:s
dependency of $|H|/n$. However, DBR5 performs best for almost all of the
problem instances solved and it is worth noting that DBR5 never performs
poorly.

\subsection{A critical review}\label{sec:acr}

When we set up the initial values for NZ, we tried different initial values
and concluded that the one used was the most profitable on average. However,
especially if we customize initial values for each problem, there may still be
potential improvements available.

Considering the results in Section \ref{sec:Bhprim}, we consider the dual
algorithm (DIR2) to outperform the primal algorithm (PIR2) since it performs
better in general. Then we continued to evaluate several modification of the
dual algorithm while similar modifications of the primal relaxation algorithm
were not evaluated, i.e., we don't evaluate blended evaluation or 3- and 5-sets
pegging for the primal algorithm. However since the plausible modification of
the primal algorithm would take the same form as for the modification of the
dual algorithm it is assumed that a modified dual algorithms will outperform a
similar modified primal algorithm.


\section{Conclusion}\label{sec:conclusion}

We have complemented the survey in \cite{Pat08} on the resouce allocation
problem at hand, and introduced, and critically evaluated, new implementations
of breakpoint and relaxation algorithms for its solution.

The results show that our new implementations (DIR2 and DBR2) of the
relaxation algorithm outperform the earlier algorithms (PIR2 and DER2). Hence
we should evaluate the dual variable for the relaxation algorithm, i.e., DIR2
outperforms PIR2. Moreover, it is more profitable in theory, as well as in
practice, to apply blended evaluation, i.e., DBR2 outperforms DER2 and
DIR2. Our results, as well as the results in \cite{RJL92,KoL98,Kiw08b}, imply
that the relaxation algorithm is to prefer when a closed form of the dual
variable $\mu$ can be found.

We introduced 3- and 5-sets pegging for the relaxation algorithm and showed
that it is most profitable to apply 5-sets pegging, which also holds for the
breakpoint algorithm.

For the problems considered, MB5 and DBR5 have a practical time complexity of
$O(n)$ also for very large values of $n$. We also showed that the relaxation
algorithm DBR5 performs better than both the Newton-like algorithm NZ and the
breakpoint algorithm MB5. Potential future improvements include the
implementation of a pegging method for a Newton-like algorithm and/or a hybrid
of the different algorithms (NZ, DBR5 and MB5), and, hence, it would be of
interest to compare the best algorithms from our study to these.

The findings made herein can most certainly be profitably utilized also in the
efficient solution of the more complex versions of the resource allocation
problem discussed, for example, in the books \cite{Mje83,IbK88,Lus12}.


\bibliographystyle{alpha}

\bibliography{masterbib}

\end{document}